\newtheorem{conj}{Conjecture}
\newtheorem{question}[conj]{Question}
\newtheorem{theorem}{Theorem}[section]
\newtheorem{proposition}[theorem]{Proposition}
\newtheorem{lemma}{Lemma}[section]
\theoremstyle{definition}
\newtheorem{remark}[lemma]{Remark}
\numberwithin{equation}{section}
\renewcommand{\le}{\leqslant}
\renewcommand{\ge}{\geqslant}
\renewcommand{\leq}{\leqslant}
\renewcommand{\geq}{\geqslant}
\newcommand{\R}{\mathbb R}
\newcommand{\mc}{\mathcal}
\newcommand{\dist}{\mathrm{dist}}
\DeclareMathOperator{\Area}{Area}
\def\qed{\ifvmode\mbox{ }\else\unskip\fi\hskip 1em plus 10fill$\Box$}
\def\Ddots{\mathinner{\mkern1mu\raise\p@
\vbox{\kern7\p@\hbox{.}}\mkern2mu
\raise4\p@\hbox{.}\mkern2mu\raise7\p@\hbox{.}\mkern1mu}}
\def\R{\mathbb R}
\def\Z{\mathbb Z}
\def\T{\mathbb T}
\def\d{\mathrm d}
\DeclareMathOperator{\Err}{Err}
\DeclareMathOperator{\supp}{supp}
\title{\vspace{-0.7cm}A new upper bound for the Heilbronn triangle problem}
\author{Alex Cohen\thanks{Department of Mathematics, Massachusetts Institute of Technology, Cambridge, MA. Email: {\tt alexcoh@mit.edu}. Research supported by an NSF GRFP Fellowship and a Hertz Foundation fellowship.} \and Cosmin Pohoata\thanks{School of Mathematics, Institute for Advanced Study, Princeton, NJ. Email: {\tt pohoata@ias.edu}. Research supported by NSF Awards DMS-1926686 and DMS-2246659.} \and Dmitrii Zakharov\thanks{Department of Mathematics, Massachusetts Institute of Technology, Cambridge, MA. Email: {\tt  zakhdm@mit.edu}.}}
\date{}
\begin{document}
\maketitle

\begin{abstract}
\smallskip
For sufficiently large $n$, we show that in every configuration of $n$ points chosen inside the unit square there exists a triangle of area less than $n^{-8/7-1/2000}$. This improves upon a result of Koml\'os, Pintz and Szemer\'edi from 1982. Our approach establishes new connections between the Heilbronn triangle problem and various themes in incidence geometry and projection theory which are closely related to the discretized sum-product phenomenon.

\end{abstract}

\section{Introduction}
Given an integer $n \geq 3$, the Heilbronn triangle problem asks for the smallest number $\Delta = \Delta(n)$ such that in every configuration of $n$ points in the unit square $[0,1]^2$ one can always find three among them which form a triangle of area at most $\Delta$. A trivial upper bound of the form $\Delta = O(1/n)$ follows from the simple observation that one can always triangulate a set of $n$ points in $[0,1]^2$ and obtain at least $n -2$ triangles whose interiors are disjoint and are all contained in the unit square. By the pigeonhole principle, at least one of these triangles will have area at most $\frac{1}{n-2}$. There is also a simple lower bound $\Delta = \Omega(1/n^2)$, which follows from either a direct greedy construction, a probabilistic construction, or the following explicit algebraic construction due to Erd\H{o}s (see for example \cite{AS92} and \cite{Roth4}). For a prime $n \leq p \leq 2n$, define 
$$X = \left\{\left(\frac{x}{p},\frac{y}{p}\right): x, y \in \left\{0,\ldots,p-1\right\},\ y = x^2 \hspace{-3mm} \mod p\right\}.$$
This is a set of size $p$ inside $[0,1]^2$ with no three collinear points and, since all the elements in the dilated set $pX$ are lattice points, any triangle with vertices in $X$ must have area at least $1/2p^2 \geq 1/8n^2$. 

The question is named after Hans Heilbronn, who in the late 1940's originally conjectured that $\Delta = O\left(n^{-2}\right)$. Koml\'os, Pintz and Szemer\'edi \cite{KPS82} disproved this conjecture by showing the existence of a configuration of $n$ points in the unit square with all triangles of area $\geq c(\log n)/n^{2}$, for some constant $c> 0$. Nevertheless, the problem of finding strong asymptotic upper bounds for $\Delta$ has remained a longstanding open problem over the years. The first nontrivial upper bound for Heilbronn's triangle problem was obtained by Roth in 1951, who showed that $\Delta = o(1/n)$ must always hold. Roth's proof consisted of a remarkable density increment argument that in fact derived the following quantitative improvement over the trivial bound: $\Delta = O\left(n^{-1} (\log \log n)^{-1/2}\right)$. In many ways, this result can be also seen as a precursor of Roth's celebrated theorem about the maximum size of a set in $\left\{1,\ldots,n\right\}$ without nontrivial three-term arithmetic progressions \cite{Roth3AP}. The first quantitative improvement of Roth's upper bound came around in 1971 from Schmidt \cite{Schmidt}, who proved that $\Delta = O\left(n^{-1} (\log n)^{-1/2}\right)$ using a much simpler argument. In the subsequent year already, however, Roth returned and introduced a novel analytic method which established the first polynomial improvement over the trivial bound, showing that $\Delta \lessapprox n^{-1-\mu}$ must hold for an absolute constant $\mu > 0$. In fact, Roth (see \cite{Roth2}, and somewhat later \cite{Roth3}) obtained the following explicit values:
$$\mu = 1 - \sqrt{\frac{4}{5}} \approx 0.10557\ \ \ \text{and}\ \ \ \mu = \frac{1}{8}(9 - \sqrt{65})  \approx 0.11721.$$
Optimizing Roth's method, in 1981 Koml\'os, Pintz and Szemer\'edi ultimately managed to improve this exponent to $\mu = 1/7 \approx 0.14285$. More precisely, they showed that
\begin{equation} \label{KPS}
\Delta \leq \frac{\exp(c\sqrt{\log n})}{n^{8/7}}
\end{equation}
must hold for some absolute constant $c>0$. 

In this paper, we break this barrier for the Heilbronn triangle problem by establishing some new connections with various themes in incidence geometry and projection theory which are closely related to the discretized sum-product phenomenon. Our main result is the following:

\begin{theorem} \label{thm:main_theorem} 
For sufficiently large $n$, any set of $n$ points in the unit square contains three points forming a triangle of area at most $$\Delta \leq n^{-8/7-1/2000}.$$
\end{theorem}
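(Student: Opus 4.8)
The plan is to argue by contradiction: suppose $P\subseteq[0,1]^2$ has $|P|=n$ and every triangle spanned by three points of $P$ has area at least $\Delta:=n^{-8/7-\eta}$ with $\eta=1/2000$, and derive a contradiction for all large $n$. I would begin by recording three elementary consequences of this hypothesis. First, $P$ is $\Omega(\Delta)$-separated: if $|p-q|=s$ then any third point $r\in P$ lies within $\sqrt2$ of the line $pq$, so $\Delta\le\Area(pqr)\le s/\sqrt2$. Second, and crucially, for every line $\ell$ and every $\rho\in[\Delta,1]$ the $\rho$-neighbourhood of $\ell$ contains $O(\rho/\Delta)$ points of $P$: three points lying within $\rho$ of $\ell$ whose orthogonal projections onto $\ell$ fall in a common interval of length $\Delta/(2\rho)$ lie in a box of area $\le\Delta$, hence span a triangle of area $<\Delta$, so each such interval holds at most two points of $P$. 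Third, for each $p\in P$ the radial projection $q\mapsto\arg(q-p)$ is injective on $P$ with a scale-dependent gap: two points at distances $r_1\le r_2$ from $p$ subtend an angle $\gtrsim\Delta/(r_1r_2)$ at $p$. The decisive feature here is that the tube bound is a non-concentration statement holding at \emph{all} scales $\rho$ at once, which is exactly what makes projection-theoretic tools applicable.

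Next I would reduce to a single, well-regularised scale. Dyadic pigeonholing over the $O(\log n)$ distance scales yields a scale $r$ and a set $P'\subseteq P$ with $|P'|\gtrsim n/\log n$ such that each point of $P'$ has $\gtrsim n/\log n$ points of $P$ at distance in $[r,2r)$; after discarding small clusters (which only make the configuration more constrained), localising to a typical $r\times r$ square, rescaling to $[0,1]^2$, and further pigeonholing to regularise finer-scale multiplicities, one obtains a $\delta$-separated set $A\subseteq[0,1]^2$ with $|A|=\delta^{-\sigma}$, where $\sigma$ is within $O(\eta)+o(1)$ of the critical value $7/8$ at which the Koml\'os--Pintz--Szemer\'edi bound is pinned down, and where $A$ inherits the rescaled tube bound (every width-$\rho$ tube meets $A$ in $\lesssim\delta^{-O(\eta)}(\rho/\delta)$ points) and the rescaled radial gap. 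The goal becomes to show that such a configuration $A$ --- non-concentrated, untrapped by lines, with all triangles of area $\gtrsim\delta^{1+\alpha}$ for the appropriate $\alpha$ --- cannot exist; equivalently, that the threshold exponent $7/8$ can be pushed down by a fixed positive amount.

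The heart of the argument, as I would arrange it, is to read the radial gap as an \emph{expansion} statement and collide it with a strong projection estimate. Fix $p\in A$ and a dyadic annulus about $p$; on that piece $\pi_p$ is injective with bounded distortion, so $\pi_p(A)\subseteq S^1$ is itself a $(\delta',\sigma)$-set at a coarser resolution $\delta'$ --- in particular non-concentrated, of measure $\approx(\delta')^{-\sigma}$. On the other hand the tube bound says $A$ is spread transversally to every line, so a discretized projection/sum-product theorem in the spirit of Bourgain's projection theorem --- or, in sharper modern form, recent Furstenberg-set and pinned-projection estimates --- forces $\pi_p(A)$ to have measure $\gtrsim(\delta')^{-\sigma-\epsilon_0}$ at resolution $\delta'$, for a definite gain $\epsilon_0=\epsilon_0(\sigma)>0$ and for all $p$ outside a small exceptional subset of $A$. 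Propagating this gain --- either by iterating the projection step across the $\sim\log(1/\delta)$ scales, or by feeding the improved projection bound into a $\delta$-discretized Szemer\'edi--Trotter count for the $\sim|A|^2$ lines determined by $A$ --- produces an upper bound on $|A|$ strictly smaller than $\delta^{-\sigma}$ whenever $\sigma>7/8-c\epsilon_0$, a contradiction as soon as the $O(\eta)$ slack from the reduction is below $c\epsilon_0$; $\eta=1/2000$ is a safe choice.

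I expect the main obstacle to lie squarely at the interface with the projection machinery. The available projection and Furstenberg-set theorems carry exceptional sets of base points and losses of the form $\delta^{-O(\epsilon)}$, so one must ensure that a positive proportion of $p\in A$ genuinely avoid the exceptional set and that the losses are absorbed --- this is exactly why the tube bound must hold at every scale with loss only $\delta^{-O(\eta)}$, rather than at a single scale. More seriously, the off-the-shelf statements are not in quite the right shape: the relevant object is the two-dimensional configuration $A$ together with its full family of pinned radial projections, and the gain one needs must be quantitative and uniform in the base point. The genuinely new technical work will therefore be to establish a tailored incidence/projection inequality of this type --- the point at which the discretized sum-product phenomenon is really used --- after which the pigeonholing and scale-bookkeeping sketched above yield Theorem~\ref{thm:main_theorem}.
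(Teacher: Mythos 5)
Your proposal correctly identifies two of the paper's key ingredients --- the non-concentration of $P$ on tubes forced by the no-small-triangle hypothesis, and the use of a discretized radial-projection theorem in the spirit of Orponen--Shmerkin--Wang to show that pinned direction sets are spread out --- and the general strategy of pigeonholing to a Frostman-regular piece before applying projection machinery is indeed how the argument begins. However, there are two genuine gaps. First, your reduction to a single $\delta$-separated set $A$ with $|A|=\delta^{-\sigma}$ and $\sigma$ near $7/8$ conflates the exponent in $\Delta = n^{-8/7}$ with a Frostman dimension of a regularized subset; no such clean reduction is available, and the actual argument must juggle \emph{two} distance scales $u_0 > u_1$ and split into a square case (pieces of dimension $s_1 = 1+\varepsilon$, handled by the Marstrand-type direction-set theorem) and a rectangle case (pieces of dimension $s_2 = 21/22 < 1$ trapped near a line, which is exactly where the OSW theorem and its tube hypothesis are needed), with the final exponent emerging from optimizing a system of five inequalities across these cases. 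Your single-scale picture has no mechanism for handling the regime where the regular piece is genuinely sub-one-dimensional and nearly contained in a thin rectangle, which is the source of most of the technical difficulty.

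Second, and more seriously, the step where the contradiction is actually derived is missing. You propose to feed the projection gain into ``a $\delta$-discretized Szemer\'edi--Trotter count,'' but Szemer\'edi--Trotter is an incidence \emph{upper} bound, and upper bounds cannot close this argument. The crux is an incidence \emph{lower} bound: the spread-out direction set gives many incidences between $P$ and the spanned lines at a coarse initial scale $\eta$, a Roth-type inductive step (the high-low inequality, which controls $|B(w)-B(w/10)|$ by the concentration functions $M_P$ and $M_L$) propagates this lower bound down through all intermediate scales to the final scale $w_f = \Delta/u$, and at that scale it collides with the fact that each strip $\T_{\ell}(4\Delta/u)$ contains exactly the two points spanning $\ell$. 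Your sketch never engages with this propagation step or with the delicate requirement that the accumulated high-low error stay below the initial incidence count at every intermediate scale --- which is precisely what forces the regularity bookkeeping you elide. You candidly flag that ``the genuinely new technical work'' is a tailored incidence/projection inequality yet to be established; that concession, together with the absence of any quantitative chain producing the $1/2000$, means the proposal is a plausible research program rather than a proof.
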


A key ingredient in our argument is a recent theorem of Orponen, Shmerkin, and Wang from \cite{OSW}, so the polynomial improvement from Theorem \ref{thm:main_theorem} can be regarded in some sense as a manifestation of the discretized sum-product theorem of Bourgain \cite{Bourgain03}.
We will start discussing our approach in \S\ref{sec:Incidences}, along with an account of the important early ideas of Roth and Schmidt that our new framework builds on. As an intermediate step towards Theorem \ref{thm:main_theorem}, we will also give an alternative proof of the Koml\'os, Pintz and Szemer\'edi result.

The proof of Theorem \ref{thm:main_theorem} shows that there exists an absolute constant $c > 0$ such that $\Delta \leq n^{-8/7 - c}$, for some explicit constant $c$ arising from a system of inequalities. 
We made no serious attempts to optimize the exponent $c$ satisfying these inequalities, nor the system itself. It seems plausible that with a more careful analysis of the argument one can obtain a slightly better value for $c$ than $1/2000$. 

Our methods also allow us to give a simple proof of a further improvement in the case when $P$ is a {\it{homogeneous}} set of points. We say $P \subset [0,1]^2$ is homogeneous if there exists an absolute constant $C > 0$ such that in every axis-parallel $n^{-1/2} \times n^{-1/2}$ square $Q$ contains at most $C$ points from $P$. 

\begin{theorem} \label{separated}
For sufficiently large $n$, any homogeneous set of $n$ points in the unit square contains three points forming a triangle of area at most $$\Delta \leq n^{-7/6+\varepsilon}$$
for any $\varepsilon > 0$. 
\end{theorem}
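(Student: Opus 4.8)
The plan is to reduce the homogeneous Heilbronn problem to a point-line incidence problem and then invoke the sum-product-flavored input (the Orponen--Shmerkin--Wang projection theorem) in a cleaner way than in the general case, exploiting the fact that a homogeneous set is already ``well-spread'' at scale $n^{-1/2}$ and therefore admits no pathological concentration on low-dimensional sets. First I would set up the standard dyadic pigeonholing: suppose toward a contradiction that every triangle with vertices in $P$ has area at least $\Delta = n^{-7/6+\ee}$. For each pair of points $p,q \in P$, the remaining $n-2$ points must avoid the slab of width $\sim \Delta / |p-q|$ around the line through $p$ and $q$. Summing/averaging over pairs at a fixed distance scale $|p-q| \sim 2^{-j}$ and using homogeneity to control how many pairs occur at each scale, one obtains an upper bound on the number of point-line incidences $I(P, \mathcal L)$, where $\mathcal L$ is the family of ``rich'' lines (those passing near many points of $P$); here homogeneity ensures the distance scales are genuinely spread over a dyadic range of length $\sim \log n$ rather than concentrated, so one really gets to use the full strength of the incidence estimate at the critical scale.

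The heart of the argument is then to show that this incidence bound is \emph{incompatible} with $\Delta$ being too large. This is where I would bring in the projection/sum-product input: a homogeneous point set that forces many near-collinear triples yields, after a standard rescaling, a configuration to which the Orponen--Shmerkin--Wang theorem applies, producing a projection (a direction) in which the image of $P$ has larger-than-expected dimension, i.e.\ a direction in which $P$ spreads out more than the collinearity hypothesis permits. Quantitatively, the $\varepsilon$-loss in the $n^{-7/6+\ee}$ exponent is exactly the slack one pays for the qualitative (non-effective in $\ee$) nature of the discretized sum-product/projection theorem, so one does not need to track constants carefully — any positive-dimension gain suffices to beat the trivial counting bound and close the contradiction. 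The exponent $7/6$ itself should emerge from balancing two regimes: the ``incidence-rich'' regime where many lines each capture $\gtrsim n^{1/3}$ points (giving area gain from Schmidt-type arguments on a single rich line) against the ``incidence-poor'' regime where the point set looks generic and the projection theorem gives the gain.

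Concretely, the steps in order would be: (1) assume all triangles have area $\geq \Delta$; (2) for each dyadic distance scale $\delta = 2^{-j}$, $n^{-1/2} \lesssim \delta \lesssim 1$, let $\mathcal L_\delta$ be the lines that are $\Delta/\delta$-close to at least $t$ points of $P$ for a parameter $t$ to be optimized, and bound $|\mathcal L_\delta|$ and the incidences using that no two such points can be closer than forced by the area lower bound applied along the line; (3) use homogeneity to pass to a single scale carrying a $\gtrsim 1/\log n$ fraction of the relevant structure; (4) normalize to the unit scale and apply the Orponen--Shmerkin--Wang theorem to the resulting $(\delta,s)$-set of points and the associated line family, obtaining a thin tube / projection in some direction where $P$ fails to be as concentrated as the hypothesis requires; (5) derive a contradiction with the area lower bound by exhibiting a triangle of area $o(\Delta)$, either from three points that the improved projection places in near-general position, or from a genuinely collinear-looking triple on a rich line. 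The main obstacle I expect is step (4)--(5): verifying that the point-line configuration produced in step (2)--(3) satisfies the precise non-concentration ($(\delta,s)$-set) hypotheses needed to feed the Orponen--Shmerkin--Wang theorem, and converting its conclusion (a dimension statement about a projection) back into the Euclidean ``small triangle'' conclusion without losing the factor of $n^{1/6}$ one is trying to gain; homogeneity is precisely the hypothesis that makes these non-concentration conditions automatic, which is why this stronger bound is available in that case while the general case requires the much more elaborate machinery alluded to in \S\ref{sec:Incidences}.
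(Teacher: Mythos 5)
Your proposal captures the right general themes (incidences between $P$ and lines spanned by nearby pairs, projection-theoretic input, homogeneity making non-concentration automatic), but two of its central claims point in the wrong direction and the quantitative engine of the proof is missing. The role of the projection theorem is misidentified: you propose to use Orponen--Shmerkin--Wang to produce ``a direction in which the image of $P$ has larger-than-expected dimension'' and to extract a small triangle from that projection. In the actual argument one partitions $[0,1]^2$ into $u\times u$ squares with $u=n^{-1/2+2\ee}$, notes that homogeneity makes each $P\cap Q_j$ a $2$-regular set relative to $Q_j$, and applies the direction-set theorem (Theorem \ref{marstrand} with $s=2$ --- the full OSW theorem for $s<1$ is not needed in the homogeneous case) to conclude that the \emph{set of directions} $\theta(L_j)$ spanned by pairs inside $Q_j$ is $\sigma$-regular, i.e.\ not concentrated in few angular intervals. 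This is exactly what feeds the basic initial estimate (Lemma \ref{lem:basic_initial_estimate_weighted}): most lines avoid the few bad tubes and hence pick up $\gtrsim w|P|$ points at the initial scale. The desired conclusion is a \emph{lower} bound on incidences at a fine scale, not a statement about a single good projection of $P$; also, homogeneity is used to work at the single distance scale $u\approx n^{-1/2}$, not to spread pairs over $\log n$ dyadic scales as in your step (2)--(3).

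More seriously, your outline has no mechanism for transporting incidence information across scales. The initial estimate lives at an angular scale $\eta\sim|P\cap Q_j|^{-1/2}$, while the contradiction with ``no triangle of area $<\Delta$'' only appears at the scale $w_f=\Delta/u$, where each strip $\T_\ell(4\Delta/u)$ around a line spanned by a pair at distance $\le u$ contains exactly two points of $P$ and hence forces $B(w_f;P,L)\lesssim u/(\Delta|P|)$. Bridging these scales is the entire content of the high-low inductive step (Theorem \ref{thm:high_low_bound}, Proposition \ref{prop:incidence_lower_bound_sep_weighted}, Lemma \ref{lem:low_scales}), which your steps (4)--(5) omit, and it is what produces the exponent: $\Delta\lesssim u\,(|P||L|)^{-1/3}\kappa^{-2/3}$ with $u\approx n^{-1/2}$ and $|L|\gtrsim n^{1+4\ee}$, i.e.\ $7/6=1/2+2/3$, where $n^{-2/3}=(|P||L|)^{-1/3}$ is the Szemer\'edi--Trotter barrier of the method. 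Your heuristic that $7/6$ arises from balancing rich lines carrying $\gtrsim n^{1/3}$ points against a generic regime does not match this: the lines actually used are spanned by pairs inside squares containing only $n^{O(\ee)}$ points each. Without the square decomposition at scale $n^{-1/2+2\ee}$, the direction-set (rather than projection-image) use of the Marstrand/OSW input, and the multiscale high-low step down to $w_f=\Delta/u$, the proposed argument cannot be completed.
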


\bigskip

{\bf{Notational conventions}}.
We will use the notation $f \lesssim g$ to denote that $f = O(g)$, that is, there exists some constant $C > 0$ such that $f < Cg$. We write $f \sim g$ if both $f \lesssim g$ and $g \lesssim f$ hold. Furthermore, whenever $f$ and $g$ are functions of the same variable $n$ we write $f \lessapprox g$ when $f < e^{c\sqrt{\log n}} g$ holds for some constant $c > 0$ which is independent of $n$, and $f \lll g$ when $f$ is asymptotically much smaller than $g$. We use $C$ to denote a numerical constant which may change from line to line.

\section{Incidence geometry setup}\label{sec:Incidences}

Let $P \subset [0,1]^2$ be a set of $n$ points and let ${P \choose 2}$ denote the set of non-degenerate pairs of points in $P$. For every $w > 0$ and $\tau = \{x,y\} \in {P \choose 2}$, let $\T_{\tau}(w)$ denote the strip of width $w$ containing all the points at distance $<w/2$ from the line $\ell_{\tau}$ supporting the pair $\tau$. Whenever more convenient, we shall say that $\T_{\tau}(w)$ is the (open) strip of width $w$ generated by $\tau$. The Heilbronn triangle problem is implicitly a problem involving incidences between points in $\mathbb{R}^{2}$ and strips generated by elements of ${P \choose 2}$. For $\Delta > 0$, if $d(\tau)$ denotes the length of the segment determined by $x$ and $y$, the locus of points $z$ such that $x,y,z$ forms a triangle with area $< \Delta$ is precisely the strip $\T_{\tau}\left(\frac{4\Delta}{d(\tau)}\right)$. In particular, if $\Delta$ denotes the smallest area determined by a triangle with vertices in $P$, then the strip $\T_{\tau}\left(\frac{4\Delta}{d(\tau)}\right)$ cannot contain any point $z$ from $P \setminus \left\{x,y\right\}$, or equivalently 
\vspace{-2mm}
\begin{equation} \label{emptystrips}
    \T_{\tau}\left(\frac{4\Delta}{d(\tau)}\right) \cap P = \left\{x,y\right\}\ \ \text{holds for every}\ \ \tau \in {P \choose 2}.
\end{equation}
Starting with Roth's original approach \cite{Roth1}, this simple observation has been the driving force behind all the progress on quantitative bounds for Heilbronn's triangle problem. The point is that if $\Delta$ is large, then these strips can be rather large, and there is also an unexpectedly small number of points from $P$ in each of them.

In \cite{Roth1}, Roth restricts to a subfamily $\mathcal{F} \subset {P \choose 2}$ of pairs $\tau$ where the corresponding segments have small lengths and the slopes of the supporting lines are similar, noticing that the strips $\left\{\T_{\tau}\left(\frac{4\Delta}{d(\tau)}\right) :\ \tau \in \mathcal{F}\right\}$ are essentially pairwise disjoint. Passing to the complement of the union of these strips, one can then find a region of the square where $P$ has an increased density (relative to its area), and subsequently iterate. 
Schmidt's argument \cite{Schmidt}, on the other hand, does not directly address this tension, but it is the first approach that implicitly considers an estimation of the number of edges in an incidence graph. Schmidt establishes the following relation between $\Delta$ and the so-called Riesz $2$-energy of $P$ (see for instance \cite{HS04}):
\begin{equation} \label{Riesz}
    \Delta \lesssim \left(\sum_{\tau \in {P \choose 2}} \frac{1}{d(\tau)^{2}}\right)^{-1/2}.
\end{equation}
The idea is as follows. Let $\mathcal{S} = [0,1]^2$ and consider again the full set of strips $L:=\left\{\T_{\tau}\left(\frac{4\Delta}{d(\tau)}\right): \tau \in {P \choose 2}\right\}$. For every $(x,\T) \in \mathcal{S} \times L$ where $\T = \T_{\tau}\left(\frac{4\Delta}{d(\tau)}\right)$, define the weight of the corresponding edge of the incidence graph of $\mathcal{S}$ and $L$ by $w(\T):=4\Delta/d(\tau)$, the width of the strip $\T$. Schmidt then noticed that the (weighted) degree of $x$ becomes bounded by a positive constant, which is independent of $n$. Equivalently, for every $x \in \mathcal{S}$ we have
\begin{equation} \label{bounded_deg}
    \sum_{\T \in L} w(\T)1_{\T}(x)  \lesssim 1,
\end{equation}
where $1_{\T}$ denotes the usual characteristic function of $\T$. This in turn implies that the total number of weighted incidences $I_{w}(\mathcal{S},L)$ determined by $\mathcal{S}$ and $L$ satisfies
$$I_{w}(\mathcal{S},L) = \int_{x \in \mathcal{S}} \sum_{\T \in L} w(\T)1_{\T}(x) \d x \lesssim 1.$$
On the other hand, 
$$\int_{x \in \mathcal{S}} \sum_{\T \in L} w(\T)1_{\T}(x) \d x = \int_{x \in \mathcal{S}} \sum_{\tau \in {P \choose 2} }  \frac{4\Delta}{d(\tau)} 1_{\T_{\tau}\left(\frac{4\Delta}{d(\tau)}\right)}(x) \d x = \sum_{\tau \in {P \choose 2} } \frac{4\Delta}{d(\tau)} \int_{x \in \mathcal{S}} 1_{\T_{\tau}\left(\frac{4\Delta}{d(\tau)}\right)}(x) \d x$$
and
$$\int_{x \in \mathcal{S}} 1_{\T_{\tau}\left(\frac{4\Delta}{d(\tau)}\right)}(x) \d x \gtrsim \frac{\Delta}{d(\tau)},$$
so (\ref{Riesz}) follows. 

For any set $P$, there is a lower bound $\sum_{\tau \in {P \choose 2}} \frac{1}{d(\tau)^{2}} \gtrsim n^{2} \log n$.
There are $\sim \log n$ many distance scales, and every distance scale contributes $\gtrsim n^2$ to the Riesz energy.
It follows that
$$\Delta^{2} \cdot n^{2} \log n \lesssim \Delta^{2} \cdot \sum_{\tau \in {P \choose 2}} \frac{1}{d(\tau)^{2}} \lesssim 1,$$
and so $\Delta \lesssim n^{-1} (\log n)^{-1/2}$ holds for every set of $n$ points $P \subset [0,1]^2$. See \cite{Schmidt} for more details. From this perspective, it becomes natural to wonder whether one can use incidence geometry in order to make progress on the Heilbronn triangle problem. 

In 1972, Roth \cite{Roth2} (indirectly) managed to do so and provided the first polynomial improvement over the trivial upper bound by introducing an elegant analytic method to better exploit the sparsity of the strips from \eqref{emptystrips}. In modern language, the idea was as follows. For every $u > n^{-1/2}$ consider the pairs $\tau = \{x,y\} \in {P \choose 2}$ such that $d(\tau) \leq u$. For each such $\tau$ let $\ell_{\tau}$ denote the line supporting $\tau$, let $L = \{\ell_{\tau}\, :\, d(\tau) \leq u\}$, and let $d(P, L)$ denote the smallest nontrivial distance between a point of $P$ and line of $L$. First, notice that
$$\Delta \leq u\, d(P, L)$$
holds for every choice of $u$. 
The number of incidences between $P$ and $L$ at scale $w > 0$ is given by $\#\{(p, \ell) \in P\times L\, :\, p \in \T_{\ell}(w)\}$. Here $\ell=\ell_{\tau}$ and $\T_\ell(w)$ is the strip of width $w$ generated by $\tau$. 
It will be convenient to work with a smoothed out version of incidences. 
In \S\ref{subsec:high_low_proof} we will construct a certain bump function $\eta: \R \to [0,1]$ satisfying
\begin{equation*}
\eta|_{[-1/2+1/10,1/2-1/10]} = 1\quad\text{ and }\quad \supp \eta \subset [-1/2-1/10, 1/2+1/10].  
\end{equation*}
We define smoothed incidences by 
\begin{equation}\label{eq:defn_smoothed_incidences_formula}
    I(w; P, L) = \sum_{(p, \ell) \in P \times L} \eta(w^{-1}d(p, \ell)). 
\end{equation}

Now, if one can show that $I(w_f; P, L) \gtrsim w_f |P| |L|$ holds for some final scale $w_f$, then $\Delta \lesssim u w_f$. For instance, one could choose $u = n^{-1/3}$, $w_f = n^{-3/4}$, $\Delta = n^{-13/12}$. With this in mind, Roth's method has two main steps. Given a set of points $P \subset [0,1]^2$ with no small area triangles:

\begin{itemize}
 	\item[(A)] \textbf{Initial estimate.} Show that for some initial scale $w_i$,
 	\begin{equation}\label{eq:roth_initial_estimate}
 		I(w_i; P, L) \gtrsim w_i |P||L|.
 	\end{equation}

 	\item[(B)] \textbf{Inductive step.} Show that at some final scale $w_f \lll w_i$ there are still lots of incidences,
 	\begin{equation}\label{eq:roth_inductive_estimate}
 		\left|\frac{I(w_i; P, L)}{w_i |P||L|} - \frac{I(w_f; P, L)}{w_f |P||L|}\right| \lll 1.
 	\end{equation}
\end{itemize} 

Notice that for every $w > 0$, $w|P||L|$ is the expected number of incidences between $P$ and the set of strips $\T_\ell(w)$ where $\ell \in L$, if the points from $P$ were distributed uniformly at random in $[0,1]^2$. So the first step pinpoints a scale $w_{i}$ at which every set $P \subset [0,1]^2$ that lacks small triangles exhibits (pseudo)random behavior. At the other extreme, the strips $\T_{\ell}\left(\frac{4\Delta}{u}\right)$ each contain an unexpectedly small number of points: if $\ell=\ell_{\tau}$, then $\T_{\ell}\left(\frac{4\Delta}{u}\right) \subset \T_{\ell}\left(\frac{4\Delta}{d(\tau)}\right)$ and recall that the latter strip satisfies \eqref{emptystrips}. 
As one varies the scale from $w_i$ (rich strips) to $\frac{4\Delta}{u}$ (empty strips) it is sensible to expect that a phase transition in behavior must happen. One can quantify this behavior analytically as follows.

Given any two strips $\T_{\ell_1}(w_f)$ and $\T_{\ell_2}(w_i)$, where $w_f < w_i$, the area of the parallelogram of intersection (if finite) is proportional to each of $w_f$ and $w_i$ and that enables one to construct systems of orthogonal functions from (weighted versions of) these strips: if for every line $\ell$, we let 
$$\phi_{\ell}(w_i,w_f;x) = \frac{1}{w_i} 1_{\T_{\ell}(w_i)}(x) - \frac{1}{w_f} 1_{\T_{\ell}(w_f)}(x),$$
then for any two lines $\ell_1, \ell_2 \in L$ we have that 
$$\int_{\mathbb{R}^{2}} \phi_{\ell_{1}}(w_i,w_f;x) \phi_{\ell_{2}}(w_i,w_f;x) \d x = 0$$
whenever it is finite, for any choice of $w_i$ and $w_f$. If $\psi_{\ell}$ denotes the function obtained from $\phi_{\ell}$ by defining $\psi_{\ell}$ to be $0$ outside the square $[-2,2]^2$, namely $\psi_{\ell}(w_i,w_f;x)=1_{[-2,2]^2}\phi_{\ell}(w_i,w_f;x)$, then an appropriate application of Selberg's inequality (a generalization of Bessel's inequality) with respect to the system of quasi-orthogonal functions $\left\{\psi_{\ell}\right\}_{\ell \in L}$ readily provides an inequality relating the number of incidences at any two different scales $w_f<w_i$. With this tool in hand, one can then start with $w_i$ and pinpoint a much smaller scale $w_f$ that satisfies \eqref{eq:roth_inductive_estimate}\footnote{In his original paper on this method \cite{Roth2}, Roth applied Selberg's inequality only once to compare incidences at an initial scale and final scale. His later refinement from \cite{Roth3} exploits the fact that it is beneficial to apply the inequality several times at a sequence of intermediate scales.}. We refer to the excellent survey of Roth \cite{Roth4} more details on this perspective. 

In this paper, we take a different (and morally equivalent) route, via the so-called {\it{high-low method}} introduced in 2017 by Guth, Solomon, and Wang \cite{GSW} to prove upper bounds for incidences determined by balls and well-spaced tubes. This allows one to derive the following comparison between the number of (renormalized) incidence counts at two different scales:
$$\left|\frac{I(w_i; P, L)}{w_i\, |P||L|} - \frac{I(w_f; P, L)}{w_f\, |P||L|}\right| \lesssim \left(\frac{M_P(w_f\times w_f)}{|P|} \frac{M_L(w_i\times 1)}{|L|} \,w_f^{-3}\right)^{1/2}.$$

Here $M_P(w_f\times w_f)$ denotes the maximum number of points in a $w_f\times w_f$ square $Q$, and $M_L(w_i \times 1)$ is the maximum number of lines whose intersection with $[0,1]^2$ is fully contained in a $w_i \times 1$ tube $T$. The inequality above thus roughly says: if the points and lines are not too concentrated, then the normalized number of incidences doesn't change too much as the scale varies. 

In \S\ref{sec:inductive_step_high_low} we prove this estimate and outline how it is used for the inductive step.

In this paper we will also establish a new connection between Heilbronn's triangle problem and projection theory, which is the main novelty in our approach. Projection theory is an area of geometric analysis that started with a fundamental paper of Marstrand \cite{Marstrand} from 1954, who showed that if $X$ is a Borel set in $\mathbb{R}^2$, then the projection of $A$ onto almost every line through the origin has Hausdorff dimension $\min\left\{1, \operatorname{dim}_{H}(X)\right\}$. See for example the survey of Falconer \cite{FalconerSurvey}. 

An elegant result that can be derived from \cite{Marstrand} is the following estimate on the Hausdorff dimension of the set of directions determined by a set in $\mathbb{R}^{2}$. 

\begin{theorem} \label{Mars}
Let $X \subset \R^2$ be a nonempty Borel set with Hausdorff dimension $\dim_H X > 1$. Then 
$$\dim_H S(X) = 1,$$
where $S(X) \subset S^1$ denotes the set of directions spanned by $X$, i.e. $S(X) := \Bigl\{\frac{x-y}{|x-y|}\ :\ x, y \in X\Bigr\}$.
\end{theorem}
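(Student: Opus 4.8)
The plan is to produce a measure on $S(X)$ whose Frostman exponent is arbitrarily close to $1$; combined with the trivial bound $\dim_H S(X)\le 1$ this proves the theorem. Passing to a bounded subset of dimension $>1$ and rescaling (which changes neither $\dim_H X$ nor $S(X)$), we may assume $\operatorname{diam}(X)\le 1$. Fix $s\in(1,\dim_H X)$ and, by Frostman's lemma, a probability measure $\mu$ supported on a compact subset of $X$ with $\mu(B(z,r))\le C r^s$ for all $z,r$, so that $I_s(\mu)<\infty$ and $\operatorname{diam}(\operatorname{supp}\mu)\le 1$; in particular $\mu$ is non-atomic. Consider $\nu:=\Theta_*(\mu\times\mu)$, the pushforward of $\mu\times\mu$ under the direction map $\Theta(x,y)=(x-y)/|x-y|$. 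Then $\Theta$ is defined $\mu\otimes\mu$-a.e., the set $\Theta(\operatorname{supp}\mu\times\operatorname{supp}\mu\setminus\Delta)$ carries full $\nu$-mass, and it is contained in $S(X)$; hence it suffices to prove $I_t(\nu)<\infty$ for every $t\in(\tfrac12,1)$, since then every Borel set of positive $\nu$-measure --- in particular $S(X)$ --- has Hausdorff dimension at least $t$, and $\tfrac12<t<1$ is arbitrary.

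To bound the energy, comparing the length of a chord of $S^1$ with the sine of the subtended angle yields
\[
|\Theta(x_1,y_1)-\Theta(x_2,y_2)|\ \gtrsim\ \frac{|(x_1-y_1)\wedge(x_2-y_2)|}{|x_1-y_1|\,|x_2-y_2|},\qquad p\wedge q:=p_1q_2-p_2q_1,
\]
so, using $\operatorname{diam}(\operatorname{supp}\mu)\le 1$,
\[
I_t(\nu)\ \lesssim\ 1+\iiiint \frac{d\mu(x_1)\,d\mu(y_1)\,d\mu(x_2)\,d\mu(y_2)}{\bigl|(x_1-y_1)\wedge(x_2-y_2)\bigr|^{t}}.
\]
For fixed $x_2,y_2$, let $e=e(x_2,y_2)\in S^1$ be orthogonal to $x_2-y_2$; then $|(x_1-y_1)\wedge(x_2-y_2)|=|x_2-y_2|\cdot|\langle x_1-y_1,e\rangle|$, so the inner double integral over $x_1,y_1$ equals $|x_2-y_2|^{-t}I_t(P_{e*}\mu)$, the $t$-energy of the orthogonal projection of $\mu$ onto $\mathbb{R}e$. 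By Hardy--Littlewood--Sobolev one has $I_t(P_{e*}\mu)\lesssim 1+\|P_{e*}\mu\|_{L^2}^{2}$ for $t\in(\tfrac12,1)$, and Marstrand's projection theorem --- applicable precisely because $s>1$ --- gives $\int_{S^1}\|P_{e*}\mu\|_{L^2}^{2}\,de\lesssim I_1(\mu)<\infty$. Were the directions $e(x_2,y_2)$ Lebesgue-generic, combining these estimates with $\iint|x_2-y_2|^{-t}\,d\mu(x_2)\,d\mu(y_2)=I_t(\mu)<\infty$ would finish the proof.

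The crux --- and the step I expect to be the main obstacle --- is exactly that the directions $e(x_2,y_2)$ range over the rotated set $S(\operatorname{supp}\mu)^{\perp}$, which a priori could lie inside the Lebesgue-null exceptional set of Marstrand's theorem, so that the pointwise bound on $I_t(P_{e*}\mu)$ is useless on precisely the directions one must control. (This circularity is why the crude Frostman version of the above --- estimating $\nu$ of a short arc by counting points of $\operatorname{supp}\mu$ in a thin double cone --- gives only $\dim_H S(X)\ge\dim_H X-1$, and a Cauchy--Schwarz bound against an $L^2$ density only $\ge\tfrac12$.) To break it, one feeds the bound $\dim_H S(X)\ge\dim_H X-1>0$ already obtained back into Kaufman's quantitative estimate on the Hausdorff dimension of the exceptional set of directions: this forces $\nu$, hence the distribution of the $e(x_2,y_2)$, to give no mass to that exceptional set once $\dim_H S(X)$ exceeds the relevant threshold, and iterating drives $\dim_H S(X)$ up to $1$. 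Equivalently, and more cleanly, one invokes the radial-projection form of Marstrand's theorem: writing $\pi_a(x)=(x-a)/|x-a|$, the exceptional-set estimates for radial projections (valid since $\dim_H X>1$) show that $\{a:\dim_H\pi_a(X)<1\}$ has Hausdorff dimension $<\dim_H X$ and hence cannot contain $X$; any $a\in X$ outside it satisfies $\pi_a(X)\subseteq S(X)$ and $\dim_H\pi_a(X)=1$.
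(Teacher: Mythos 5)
The paper never proves this statement: Theorem \ref{Mars} is quoted as a classical consequence of Marstrand's 1954 work, with a pointer to \cite{Orp} for references, so there is no in-paper argument to compare against. Judged on its own terms, your proposal reaches a correct proof only through its final sentence. The radial-projection route is indeed the standard derivation: one needs a single point $a\in X$ with $\dim_H \pi_a(X\setminus\{a\})=1$, since $\pi_a(X\setminus\{a\})\subset S(X)\subset S^1$. The cleanest classical form of this ingredient is Marstrand's ``radiation point'' theorem (for an $s$-set with $s>1$, $\mathcal H^s$-a.e.\ $a\in X$ has $\mathcal L^1(\pi_a(X\setminus\{a\}))>0$; see Mattila's book, Ch.~10), which is a statement about $\mu$-typical points of $X$ itself and so sidesteps any ambient exceptional-set bound. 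The specific bound you invoke, that $\{a:\dim_H\pi_a(X)<1\}$ has Hausdorff dimension $<\dim_H X$, is true (the sharp exponent $2-\dim_H X$ is an Orponen/OSW-type estimate), but it is much more recent and is overkill for this theorem.

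The genuine gap is in your first proposed fix, the Kaufman bootstrap. The only quantitative input you have on the distribution of the directions $e(x_2,y_2)$ is the Frostman exponent $s-1$ coming from the double-cone covering bound. Feeding a measure of Frostman exponent $s-1$ into Kaufman's integral $\int |\langle z,e\rangle|^{-t}\,d\lambda(e)\lesssim |z|^{-t}$ requires $t<s-1$, and the output is $I_t(\nu)<\infty$ for exactly that range of $t$ --- i.e.\ $\dim_H S(X)\ge s-1$ again. Finiteness of $I_t(\nu)$ for $t<s-1$ does not upgrade the Frostman exponent of $\nu$ (or of the weighted pushforward $\tilde\nu$) beyond $s-1$, so the iteration is stationary at $s-1$ rather than climbing to $1$. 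This stalling is precisely the known obstruction that makes direction-set/radial-projection theorems nontrivial and is why one must pass to projections based at points of $X$ itself (Marstrand's argument) or to the heavier machinery of \cite{OSW}. So: delete or clearly demote the bootstrap, and rest the proof on the radial-projection step, ideally in its classical ``a.e.\ point of $X$'' form.
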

See \cite{Orp} for a discussion and further references. 

The first important twist in our story is the fact that a discretized version of this result can be used to prove an initial estimate of the form \eqref{eq:roth_initial_estimate} which turns out to be similar in quality to Roth's original estimate. We will discuss this in \S\S\ref{sec:inital_estimate}-\ref{sec:lower_bounds_for_incidences}. We can then use this estimate to give a new proof of the Koml\'os, Pintz, and Szemer\'edi bound $\Delta \lessapprox  n^{-8/7}$. 

Estimating the dimension of the direction set determined by a set of points is closely related to the problem of estimating dimensions of radial projections, an active topic recently. See for example \cite{OSW} and the references therein. One of the main results of Orponen, Shmerkin, and Wang from \cite{OSW} is the following remarkable refinement of Theorem \ref{Mars}.

\begin{theorem}\label{thm:OSW_dir_set}
Let $X \subset \R^2$ be a nonempty Borel set not contained in any line. Then
$$
\vspace{+3mm}
	\dim_H S(X) \geq \min\left\{1, \dim_H X\right\}. 
$$
\end{theorem}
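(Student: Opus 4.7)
We may assume $s := \dim_H X \in (0, 1]$: the case $s > 1$ follows from Theorem \ref{Mars}, and $s = 0$ is trivial. The starting point is the structural identity
\[
S(X) = \bigcup_{x \in X} \pi_x\bigl(X \setminus \{x\}\bigr), \qquad \pi_x(y) := \frac{y - x}{|y - x|},
\]
so it suffices to exhibit a single $x \in X$ with $\dim_H \pi_x(X \setminus \{x\}) \geq s - \varepsilon$ for every $\varepsilon > 0$. I would aim for the stronger statement that, for a Frostman measure $\mu$ of exponent slightly below $s$ supported on $X$ (furnished by Frostman's lemma), this lower bound holds for $\mu$-a.e.\ $x$.

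Assume for contradiction that for some $t < s$ there is a set $E \subset X$ with $\mu(E) > 0$ on which $\dim_H \pi_x(X \setminus \{x\}) < t$. Dyadic pigeonholing over scales extracts, for arbitrarily small $\delta > 0$, a $(\delta, t)$-set $P \subset X$ with $|P| \approx \delta^{-t}$ and a $(\delta, \alpha)$-set $P' \subset E$ with $|P'| \approx \delta^{-\alpha}$ for some $\alpha > 0$, such that for every $x \in P'$ the image $\pi_x(P)$ is covered by $\ll \delta^{-t}$ arcs of length $\delta$ on $S^1$; equivalently, $P$ sits in a thin union of tubes through each $x \in P'$. Because $X$ is not contained in any line, after discarding a negligible piece of $\mu$ one may further impose a line non-concentration property on $P$ and $P'$: no line meets more than $\delta^{-t + \varepsilon_0}$ points of either set.

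The heart of the argument is a discretized radial projection inequality ruling this scenario out. The key mechanism is a change of variables --- essentially a Möbius inversion or a slope parametrization --- which, after appropriate $L^2$ averaging over the base points $x \in P'$, converts the radial projection problem into a planar linear projection statement of the form: a $(\delta, t)$-set with line non-concentration projects onto most directions with covering number $\gtrsim \delta^{-t + o(1)}$. This is precisely Bourgain's discretized projection theorem, equivalent to the discretized sum--product theorem. Passing back through the change of variables produces the required lower bound on $|\pi_x(P)|$ for typical $x \in P'$, contradicting the standing upper bound; sending $t \uparrow s$ then yields the theorem.

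The main obstacle is the discretized radial projection inequality above. Conceptually the reduction is clean, but two technical points require care: first, one must transport the line non-concentration hypothesis through the family of changes of variables indexed by $x \in P'$, which rules out a naive union bound and forces an $L^2$ incidence formulation in the style of Orponen; second, the planar projection estimate one ends up needing relies on the full strength of Bourgain's discretized sum--product theorem, itself a deep result. Everything else --- Frostman's lemma, dyadic pigeonholing, removing a line's worth of $\mu$-mass --- is routine, and the real depth sits in the sum--product input.
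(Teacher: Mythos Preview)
The paper does not give its own proof of Theorem \ref{thm:OSW_dir_set}: it is quoted as one of the main results of Orponen, Shmerkin, and Wang \cite{OSW} and used as a black box. What the paper does prove, in Appendix \ref{app:proof_of_dir_set_estimates}, is a routine deduction of the discretized statement Theorem \ref{thm:finite_set_dir_set} from \cite[Corollary 2.18]{OSW}; that deduction is a measure-smoothing and Fubini argument, not a proof of the continuous direction-set theorem itself. So there is no ``paper's own proof'' to compare your proposal against.

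That said, your outline is in the right spirit for how \cite{OSW} proceeds: the reduction to radial projections $\pi_x$, Frostman measures, a contradiction at discretized scales, and the ultimate reliance on a discretized projection theorem driven by Bourgain's sum--product phenomenon are all genuine ingredients. You also correctly locate the two nontrivial pressure points (propagating non-concentration through the base-point family, and the sum--product depth). Where your sketch is loose is the ``M\"obius inversion / slope parametrization'' step that purportedly converts radial to linear projections in one move; the actual argument in \cite{OSW} does not go through a single clean change of variables but through an iterative/bootstrap scheme and incidence estimates for thin tubes, and making that step rigorous is essentially the whole paper. So your proposal is a credible high-level road map rather than a proof, and its value here is as context, since the present paper simply cites the result.
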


Theorem \ref{thm:OSW_dir_set} can also be regarded as a continuous analogue of Sz\H{o}ny's theorem \cite{Szony} that every set $A \subset \mathbb{F}_{p}^{2}$ of size $1 < |A| \leq p$ determines at least $\frac{|A|+3}{2}$ distinct directions, provided that $A$ is not contained in any affine line. This result extends a previous theorem of R\'edei \cite{Redei} which addressed the case when $|A|=p$, and which corresponds to Theorem \ref{Mars} in this analogy. 

Using a discretized version of Theorem \ref{thm:OSW_dir_set} rather than of Theorem \ref{Mars}, we will be able to access finer scales and thus manage to provide an initial estimate in the style of \eqref{eq:roth_initial_estimate} that allows us to bypass the Koml\'os, Pintz, and Szemer\'edi threshold.

\section{Roth's analytic method: a modern take}\label{sec:inductive_step_high_low}

Now we state the high-low inequality which drives the inductive step. Let $w > 0$ be a scale. For $T$ a $w$-tube, let
\begin{equation*}
    L\cap T = \{\ell \in L\, :\, \ell \cap [0,1]^2 \subset T\}
\end{equation*}
be the set of lines contained in $T$. We sometimes abbreviate $\ell \cap [0,1]^2 \subset T$ by just saying $\ell \subset T$. In general, when evaluating subset relations always intersect the sets involved with $[0,1]^2$ first. 
Define
\begin{align*}
	M_P(w\times w) &= \max_{\text{$Q$ a $w\times w$ square}} |P \cap Q|, \\
	M_L(w\times 1) &= \max_{\text{$T$ a $w$-tube}}|L \cap T|.
\end{align*}

Furthermore, let
\begin{equation*}
    B(w; P, L) = \frac{I(w; P, L)}{w\, |P||L|}
\end{equation*}
denote the normalized incidence count so that if $P$ and $L$ are distributed uniformly then $B(w; P, L) \sim 1$. To simplify notation, we will sometimes write $B(w)$ instead of $B(w;P,L)$, and similarly with other functions.

\begin{theorem}[High-low bound, single scale]\label{thm:high_low_bound}
We have
\begin{equation}\label{eq:high_low_bound}
      |B(w; P, L) - B(w/10; P, L)| \lesssim \left(\frac{M_P(w\times w)}{|P|} \frac{M_L(w\times 1)}{|L|} \,w^{-3}\right)^{1/2}. 
\end{equation}
\end{theorem}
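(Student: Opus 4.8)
The plan is to run the high-low method of Guth--Solomon--Wang in a Fourier-analytic guise. Introduce a bump function $\eta$ as promised in the excerpt, and for a scale $w$ write the smoothed incidence count as a convolution pairing
$$
I(w; P, L) = \sum_{(p,\ell) \in P \times L} \eta(w^{-1} d(p,\ell)) = \langle \mu_P , \nu_L * K_w \rangle,
$$
where $\mu_P = \sum_{p \in P} \delta_p$, $\nu_L$ is the sum of arclength (or uniform) measures on the lines $\ell \cap [0,1]^2$, and $K_w$ is an $L^\infty$-normalized bump at scale $w$ in the direction transverse to $\ell$ (so that integrating $K_w$ across a strip gives $\sim w$). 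The key algebraic identity is that $B(w) - B(w/10)$ is, up to the normalization $w |P||L|$ versus $(w/10)|P||L|$, a pairing of $\mu_P$ against $\nu_L$ convolved with the \emph{difference} kernel $D_w := \tfrac{1}{w} K_w - \tfrac{10}{w} K_{w/10}$. The point of the difference kernel is that $\widehat{D_w}$ is supported (essentially) in the frequency annulus $|\xi| \sim w^{-1}$: the low frequencies cancel, which is exactly the cancellation that makes the two normalized counts close.

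Next I would estimate $|B(w) - B(w/10)|$ by Cauchy--Schwarz in frequency (Plancherel). Writing the pairing as $\int \widehat{\mu_P}(\xi)\, \overline{\widehat{\nu_L}(\xi)}\, \widehat{D_w}(-\xi)\, d\xi$ and splitting $\widehat{D_w}$ onto its support annulus, one gets
$$
|B(w) - B(w/10)| \lesssim \frac{1}{w|P||L|}\left(\int_{|\xi|\sim w^{-1}} |\widehat{\mu_P}(\xi)|^2 \, d\xi\right)^{1/2}\left(\int_{|\xi| \sim w^{-1}} |\widehat{\nu_L}(\xi)|^2\,d\xi \right)^{1/2},
$$
after absorbing the $L^\infty$ bound on $\widehat{D_w}$, which is $\lesssim w$ on the annulus. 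The two remaining $L^2$ masses are the ``high part'' energies of the point set and the line set at frequency $w^{-1}$. Each of these should be bounded by a geometric quantity: a frequency-$w^{-1}$ energy of $\mu_P$ is controlled by $\int |\mu_P * \chi_{w}|^2$ for $\chi_w$ a $w\times w$ bump, which is at most $M_P(w\times w)$ times the total mass $|P|$ times $w^{-2}$ (each unit square of side $w$ contributes $\le M_P(w\times w)^2$, and there are $\lesssim |P|/M_P \cdot$ volume factors — more cleanly, $\|\mu_P * \chi_w\|_2^2 \le \|\mu_P*\chi_w\|_\infty \|\mu_P * \chi_w\|_1 \lesssim M_P(w\times w)\cdot |P|$, then divide by the $w^2$ normalization). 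Dually, the frequency-$w^{-1}$ energy of $\nu_L$ is governed by how many lines fit in a $w$-tube, i.e. $M_L(w\times 1)$, again against the total mass $|L|$, with an appropriate power of $w$. Multiplying these two bounds and collecting the powers of $w$ produces exactly $\big(\tfrac{M_P(w\times w)}{|P|}\tfrac{M_L(w\times 1)}{|L|} w^{-3}\big)^{1/2}$.

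The main obstacle, and the step deserving the most care, is making the ``high part'' energy estimates rigorous and getting the power of $w$ exactly right: one must commit to precise normalizations for $\mu_P$, $\nu_L$, $K_w$, and $\eta$, verify that the difference kernel $D_w$ genuinely has Fourier support away from the origin (this is where the specific shape of $\eta$ on $[-1/2+1/10,1/2-1/10]$ and its support in $[-1/2-1/10,1/2+1/10]$ enters, so that $K_w$ and $K_{w/10}$ agree on low frequencies up to acceptable error), and handle the anisotropy — $\nu_L$ lives on one-dimensional sets, so its Fourier transform decays only in the transverse direction, and the annulus $|\xi|\sim w^{-1}$ must be decomposed into the relevant tubes in frequency space to apply the $M_L(w\times 1)$ bound. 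A clean way to sidestep some of this is to localize everything to $[-2,2]^2$ (as Roth does with the functions $\psi_\ell$) and argue on the physical side: bound $|B(w)-B(w/10)|$ by $\|\mu_P * D_w\|_2 \|\nu_L\|_2 / (w|P||L|)$ via Cauchy--Schwarz, then estimate $\|\mu_P * D_w\|_2$ using that $D_w$ is mean-zero at scale $w$ (so its action is a ``local oscillation'' of $\mu_P$, bounded by $M_P(w\times w)$) and $\|\nu_L\|_2^2$ (in a suitably mollified sense at scale $w$) using $M_L(w\times 1)$. Either way the arithmetic of the exponents is the delicate point, and I would double-check it against the trivial case $M_P = |P|$, $M_L = |L|$ (all points in one square, all lines in one tube), where the right-hand side becomes $w^{-3/2}$ and the inequality should still be consistent with $B(w), B(w/10) \lesssim w^{-1}$ being possible.
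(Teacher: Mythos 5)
Your main route is, modulo Plancherel, the same argument the paper runs in physical space: write $B(w)-B(w/10)$ as a pairing of the point measure against the line measure convolved with a two-scale difference kernel, apply Cauchy--Schwarz, bound the point factor by an $L^1$--$L^\infty$ estimate $\|\chi_{w/10}*g\|_2^2\lesssim w^{-2}|P|\,M_P(w\times w)$, and bound the line factor using quasi-orthogonality together with $M_L(w\times 1)$. Your bookkeeping of exponents is consistent with the target. The gap is that essentially all of the content of the theorem sits in the one estimate you defer: $\int_{|\xi|\sim w^{-1}}|\widehat{\nu_L}|^2\lesssim w^{-1}|L|\,M_L(w\times 1)$, equivalently $\bigl\|\sum_{\ell}\Phi_{\ell}\bigr\|_{L^2([-2,2]^2)}^2\lesssim w^{-1}|L|\,M_L(w\times 1)$ for the two-scale strip differences $\Phi_\ell$. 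The paper proves this by (i) exact orthogonality of $\Phi_{\ell_1}$ and $\Phi_{\ell_2}$ over all of $\R^2$ for non-parallel lines, (ii) a symmetry-plus-Taylor argument controlling the truncation to a bounded box, yielding $|\int\Phi_{\ell_1}\Phi_{\ell_2}\psi|\lesssim w^2\theta^{-3}$ for lines at angle $\theta$, and (iii) the geometric count that lines at angle $\sim\theta_0$ to $\ell_1$ whose $2w$-neighborhoods meet the box lie in $\lesssim\max(1,\theta_0/w)$ many $w$-tubes, hence number at most $M_L(w\times 1)\max(1,\theta_0/w)$; the dyadic sum over $\theta_0=jw$ then converges. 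On the Fourier side you would have to reproduce both facts: that dual slabs of lines at angle $\gg w$ barely meet the annulus, and that nearly parallel but spatially separated lines give oscillatory cross terms. The second point is where $M_L(w\times 1)$ --- a spatial tube count, not an angular count --- has to enter, and it is not automatic; as written your sketch asserts the conclusion rather than proving it.

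Separately, your proposed ``clean way to sidestep'' is not correct as stated. Moving the difference kernel onto the point side and estimating $\|\mu_P*D_w\|_2\,\|\nu_L\|_2$ loses the theorem: the mollified quantity $\|\nu_L*\chi_w\|_2^2\le\|\nu_L*\chi_w\|_1\|\nu_L*\chi_w\|_\infty$ is controlled by $\sup_x\#\{\ell: d(x,\ell)\le w\}$, which is not $M_L(w\times 1)$ (take all lines through a single point: then $M_L(w\times 1)=O(1)$ for small $w$, yet every line is $w$-close to that point, so the sup is $|L|$). The cancellation must live on the line side, where two transversal strips overlap in a parallelogram of area $\sim w^2/\theta$; this is exactly why the paper convolves $f_w$ with $\chi_w*\chi_{w/10}$, factors only the harmless $\chi_{w/10}$ onto $g$, and keeps the two-scale difference $\chi_w*f_w-\chi_{w/100}*f_{w/10}$ intact on the $L^2$ side where the orthogonality is exploited.
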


We can write the error term as 
\begin{equation}\label{eq:defn_err_equation}
      \Err(w; P, L) = \left(\frac{M_P(w\times w)}{|P|} \frac{M_L(w\times 1)}{|L|} \,w^{-3}\right)^{1/2}.
\end{equation}
The definition of smoothed incidences is specially designed to accommodate the factor of $10$ in (\ref{eq:high_low_bound}): for example, in order to replace $w/10$ with $w/8$, you would have to change the smoothing function $\eta$ in (\ref{eq:defn_smoothed_incidences_formula}). 
Now we state some variations on the bound (\ref{eq:high_low_bound}). For any $K > 1$, if  $10^j \leq K < 10^{j+1}$, then 
\begin{equation*}
    B(w/10^{j+1}) \lesssim B(w/K) \lesssim B(w/10^j).
\end{equation*}
Using the inequality $\Err(w/10^j) \leq 10^{3j/2}\Err(w)$, we can geometrically sum to obtain
\begin{equation}\label{eq:high_low_bound_K}
      B(w) - K^{3/2} \Err(w) \lesssim B(w/K) \lesssim B(w) + K^{3/2} \Err(w).
\end{equation}
To compare the number of incidences at two far apart scales $w_f < w_i$ (think of $w_i \sim |P|^{-0.1}$ and $w_f \sim |P|^{-2/3}$), we sum the errors at several intermediate scales to get the estimate
\begin{equation}\label{multscale}
    B(w_f) - \mathrm{Err} \lesssim B(w_f) \lesssim B(w_i) + \mathrm{Err},\quad \mathrm{Err} = \log\Bigl(2+\frac{w_i}{w_f}\Bigr) \sup_{w_f < w < w_i} \Err(w). 
\end{equation}

In order to run the inductive step we need $\Err(w) \lll 1$ for all $w_f < w < w_i$. The error is small if the point set and line set are not too concentrated. On one extreme, if the points and lines are as well spaced as possible, then $M_P(w\times w) \sim \max\left\{1, w^2 |P|\right\}$ and $M_L(w\times 1) \sim \max\left\{1, w^2 |L|\right\}$. Usually such a bound is too much to ask for. The Frostman regularity condition is a natural way of controlling how much points and lines are concentrated, it says that for some $0 < \alpha_P, \alpha_L \leq 2$, 
\begin{equation*}\label{eq:frostman_cond}
	M_P(w\times w) \lesssim \max\left\{1, w^{\alpha_P} |P|\right\},\quad M_L(w\times 1) \lesssim \max\left\{1, w^{\alpha_L} |L|\right\}.
\end{equation*}
This condition is a discrete analogue of $P$ being an $\alpha_P$-dimensional set and $L$ being an $\alpha_L$-dimensional set. 
In order to run the inductive step, we need $\alpha_P + \alpha_L > 3$. 

For small enough $w$, we expect $M_P(w\times w) \sim M_L(w\times 1) \sim 1$. In this case $\Err(w) = (|P||L|)^{-1/2}w^{-3/2}$, which is controlled if $w \gtrsim (|P||L|)^{-1/3}$. So the smallest length scale reachable by the high-low method is $w_f = (|P||L|)^{-1/3}$. There is an example behind this barrier. 

\bigskip

{\bf{Szemer\'edi-Trotter obstruction}}. The sharp example for the high-low inequality is the same as the sharp example for the Szemer\'edi-Trotter theorem \cite{ST83}. Let 
\begin{align*}
	P = \Bigl\{\Bigl(\frac{a}{\sqrt{n}}, \frac{b}{\sqrt{n}}\Bigr)\, :\, 0 \leq a, b \leq n^{1/2}\Bigr\} \subset [0,1]^2
\end{align*}
be the square grid. Let 
\begin{align*}
	L = \Bigr\{\ell_{p, \vec q}\, :\, p, q \in P,\ \|q\|_{\infty} \sim n^{-1/3}\},\quad \ell_{p,\vec q} \text{ is the line based at $p$ in direction $q$}. 
\end{align*}
Each line in $L$ passes through $\sim n^{1/3}$ points of $P$, and each point $p \in P$ has $\sim n^{1/3}$ lines through it, so $|P| \sim |L| = n$.
We have
\begin{align*}
    B(w; P, L) \sim \max\left\{1, w^{-1}n^{-2/3}\right\}.
\end{align*}
The two terms in the maximum agree when $w = n^{-2/3}$, which is the smallest distance between a line and a point off the line. 
We have
$$M_P(w\times w) \sim \max\left\{1, w^2 |P|\right\},\quad M_L(w\times 1) \sim \max\left\{1, w^2 |L|\right\},\ \text{and}\ \Err(w) \sim \max\left\{w^{1/2}, n^{-1}w^{-3/2}\right\}.$$
This means that $\Err(w) \lll 1$ holds as long as $w \ggg n^{-2/3}$, and the high-low estimate (\ref{eq:high_low_bound}) shows that 
\begin{equation*}
	B(w; P, L) \sim 1 \text{ for all $w \gtrsim n^{-2/3}$}
\end{equation*}
which is the sharp regime. This example agrees with our earlier claim that $(|P||L|)^{-1/3} = n^{-2/3}$ is the smallest scale the high-low method can reach. 

We can take the same example and perturb each line slightly so that it's as far as possible from the point set $P$. Then each line will have distance $n^{-2/3}$ from any point, and we will have  

\begin{equation*}
	B(w; P, L) \sim \begin{cases}
		1 & \quad w > n^{-2/3}, \\ 
		0 & \quad w < n^{-2/3}. 
	\end{cases}
\end{equation*}

Once again the high-low estimate gives a lower bound on incidences in the sharp range of scales. 

\bigskip

{\bf{Lower bounds and upper bounds for incidences}}.
Roth developed his inductive step to prove lower bounds for incidences. Guth, Solomon, and Wang \cite{GSW} introduced the high-low method to prove upper bounds for incidences. Theorem \ref{thm:high_low_bound} unifies these two perspectives: the same inequality can be used to prove upper bounds or lower bounds. Lots of famous problems in combinatorics and analysis---for example, the unit distance problem and the Kakeya problem---have to do with upper bounds for incidences, and there is a huge amount of research on this topic. On the other hand, incidence lower bounds have not been studied much. 
We hope to draw attention that these lower bounds are the key in Heilbronn's problem, and to encourage more research on lower bounds for incidences in general.

\subsection{Proof of the high-low bound}\label{subsec:high_low_proof}
In this section we prove the high-low estimate, Theorem \ref{thm:high_low_bound}.

Let $P = \{x_1, \ldots, x_n\}$ be a set of points in $[0,1]^2$, $L = \{\ell_1, \ldots, \ell_m\}$ a set of lines. 

Let
\begin{align*}
    f_w = \sum_{\ell \in L} \frac{1}{w} 1_{\T_{\ell}(w)},\quad g = \sum_{p \in P} \delta_p. 
\end{align*}
Then 
\begin{equation*}
    \langle f_w, g\rangle = w^{-1}\, \#\{(p, \ell) \in P \times L\, :\, d(p, \ell) \leq w/2\}. 
\end{equation*}
We are ready to give a smoothed definition of incidences.
Let $\chi: \R^2 \to \R_{\geq 0}$ be a fixed radially symmetric bump function with $\int \chi = 1$ and $\supp \chi \subset B_{1/50}$. Let $\chi_w(x) = w^{-2}\chi(w^{-1}x)$. We define the smoothed version of incidences by convolving $f_w$ with $\chi_w * \chi_{w/10}$, 
\begin{equation}\label{eq:defn_smoothed_incidences_inner_prod}
    I(w; P, L) := w\, \langle \chi_w * \chi_{w/10} * f_w, g\rangle. 
\end{equation}
Let 
\begin{equation*}
    \eta(t) = \int_{-1/2}^{1/2} \int_{-\infty}^{\infty}(\chi * \chi_{1/10})(t+r, s)\, ds\, dr. 
\end{equation*}
Notice that $\eta|_{[-1/2+1/10, 1/2-1/10]} = 1$ and $\supp \eta \subset [-1/2-1/10, 1/2+1/10]$. 
Because $\chi$ was chosen radially symmetric, we see that 
\begin{equation*}
    I(w; P, L) = \sum_{(p, \ell) \in P \times L} \eta(w^{-1}d(p, \ell))
\end{equation*}
as we claimed in (\ref{eq:defn_smoothed_incidences_formula}). In the analysis we will use (\ref{eq:defn_smoothed_incidences_inner_prod}) rather than (\ref{eq:defn_smoothed_incidences_formula}).
We have 
\begin{equation}
    B(w) = \frac{I(w)}{w\, |P|\, |L|} = \frac{1}{|P|\, |L|}\langle \chi_w * \chi_{w/10} * f_w, g\rangle,
\end{equation}
hence
\begin{align*}
    B(w) - B(w/10) &= \frac{1}{|P|\, |L|}\, \langle \chi_{w} * \chi_{w/10} *  f_w  - \chi_{w/10} * \chi_{w/100} * f_{w/10} , g\rangle \\ 
    &= \frac{1}{|P|\, |L|}\, \langle \chi_{w/10} * (\chi_{w} * f_w  - \chi_{w/100} * f_{w/10}) , g\rangle.
\end{align*}
We convolved $f_w$ with $\chi_w * \chi_{w/10}$ in (\ref{eq:defn_smoothed_incidences_inner_prod}) rather than just $\chi_w$ in order to factor out $\chi_{w/10}$ in this equation. Putting the convolution on the other side and using Cauchy-Schwarz,
\begin{equation}
    |B(w) - B(w/10)| \leq \frac{1}{|P|\, |L|}\, \| \chi_w * f_w - \chi_{w/100} * f_{w/10} \|_{L^2([-2,2]^2)}\, \| \chi_{w/10} * g \|_{L^2([-2,2]^2)}. 
\end{equation}
We estimate the term involving $g$ using the quantity $M_P(w\times w)$, 
\begin{align*}
    \| \chi_{w/10} * g \|_2^2 \leq \| g \|_1 \| \chi_{w/10} * g \|_{\infty} \lesssim w^{-2} |P|\,M_P(w\times w).
\end{align*}
We estimate the term involving $f$ using orthogonality and the quantity $M_L(w\times 1)$. 
Let
\begin{align*}
    \Phi_{\ell} &= \chi_w * w^{-1}1_{\T_{\ell}(w)} - \chi_{w/100} *  (w/10)^{-1}1_{\T_{\ell}(w/10)}\\ 
    \sum_{\ell \in L} \Phi_{\ell} &= \chi_w * f_w - \chi_{w/100} * f_{w/10}.
\end{align*}
We expand the $L^2$ norm as a sum of truncated inner products. Let $\psi: \R^2 \to [0,1]$ be a fixed smooth bump function with $\psi|_{[-2,2]^2} = 1$ and $\supp \psi \subset [-3,3]^2$. We have
\begin{equation*}
    \Bigl \| \sum_{\ell \in L} \Phi_{\ell} \Bigr \|_{L^2([-2,2]^2)}^2 \leq \sum_{\ell_1, \ell_2} \left|\int \Phi_{\ell_1}(x)\Phi_{\ell_2}(x)\, \psi(x) dx\right|
\end{equation*}
If $\ell_1$ and $\ell_2$ are not parallel, then $\langle \Phi_{\ell_1}, \Phi_{\ell_2}\rangle = 0$ where the inner product is taken in all of $\R^2$. 

Here is an approximate version of this orthogonality. Let $\ell_1, \ell_2$ be two lines. Say $\ell_1 \sim \ell_2$ if 
\begin{equation*}
    \T_{2w}(\ell_1)\cap \T_{2w}(\ell_2) \cap [-3,3]^2 \neq \emptyset.
\end{equation*}
If $\ell_1 \not\sim \ell_2$ then the truncated inner product vanishes. 
Let $\ell_1\sim \ell_2$ make an angle $0 < \theta \leq \pi/2$. 
Here is an easy estimate for the truncated inner product:
\begin{equation*}
    \left|\int \Phi_{\ell_1}(x)\Phi_{\ell_2}(x)\, \psi(x) dx\right| \leq \| \Phi_{\ell_1} \|_{L^2([-3,3]^2)} \| \Phi_{\ell_2} \|_{L^2([-3,3]^2)} \lesssim w^{-1}.
\end{equation*}
This is good enough when $\theta \leq w$, but we need an improvement using orthogonality when $\theta$ is large. Let $R = \T_{2w}(\ell_1) \cap \T_{2w}(\ell_2)$, and let $x_R = \ell_1 \cap \ell_2$ be the center of $R$. Notice that $R$ is a parallelogram which is contained in a rectangle with width $w$ and length $2w/\sin \theta$. 
Using the fact that $\Phi_{\ell_1}$ and $\Phi_{\ell_2}$ are orthogonal and re-centering to $R$, 
\begin{equation*}
    \int \Phi_{\ell_1}(x) \Phi_{\ell_2}(x)\, \psi(x) dx = \int (\Phi_{\ell_1}\Phi_{\ell_2})(x_R+y)\, (\psi(x_R+y) - \psi(x_R))\, dy. 
\end{equation*}
Because $\psi$ is a fixed smooth function, we have the linear approximation 
\begin{equation*}
    \psi(x_R+y) - \psi(x_R) = \nabla \psi(x_R) \cdot y + O(|y|^2).
\end{equation*}
The integral against the linear part vanishes due to the symmetry
\begin{equation*}
    \Phi_{\ell_1}(x_R+y) = \Phi_{\ell_1}(x_R-y)\quad \text{and}\quad \Phi_{\ell_2}(x_R+y) = \Phi_{\ell_2}(x_R-y).
\end{equation*}
The $O(|y|^2)$ term is bounded by $C\cdot \mathrm{Diameter}(R)^2$, so 
\begin{equation*}
    \left|\int \Phi_{\ell_1}(x) \Phi_{\ell_2}(x)\, \psi(x) dx\right| \lesssim \Area(R)\, \mathrm{Diameter}(R)^2\, w^{-2} \lesssim (w^2\theta^{-1})\, (w\theta^{-1})^2\, w^{-2} = w^2\theta^{-3}.
\end{equation*}
Next, we have the estimate 
\begin{equation*}
    \#\{\ell_1 \sim \ell_2\, :\, \theta(\ell_1, \ell_2) \in [\theta_0, \theta_0 + w/100]\} \lesssim M_L(w\times 1)\, \max(1,\theta_0/w)
\end{equation*}
because any line $\ell_2$ in the set on the left has to be contained in one of $\lesssim \max(1,\theta_0/w)$ many $w$ tubes. Combining these estimates, we see that for fixed $\ell_1$, 
\begin{align*}
    \sum_{\ell_2 \in L} \left|\int \Phi_{\ell_1}(x)\Phi_{\ell_2}(x)\, \psi(x) dx\right| &\lesssim \sum_{j=1}^{200w^{-1}} w^2 (jw)^{-3}\, \#\{\ell_1 \sim \ell_2\, :\, \theta(\ell_1, \ell_2) \in [(j-1)w/100, jw/100]\} \\ 
    &\lesssim w^{-1}M_L(w\times 1)\, \sum_{j=1}^{200w^{-1}}  j^{-2} \lesssim w^{-1}M_L(w\times 1).
\end{align*}
Putting everything together,
\begin{equation*}
    |B(w) - B(w/10)| \lesssim  \sqrt{\frac{M_P(w\times w)}{|P|}\frac{M_L(w\times 1)}{|L|}\, w^{-3}}.
\end{equation*}

\section{Initial estimates via projection theory}\label{sec:inital_estimate}

In this section, we describe our new projection theory approach to initial estimates of the form \eqref{eq:roth_initial_estimate}. This approach is quite different from Roth's original argument from \cite{Roth2} (and the subsequent papers that used it) and is the main novelty of our paper. For a fixed scale $u > n^{-1/2}$, consider the pairs $\tau = \{x,y\} \in {P \choose 2}$ such that $d(\tau) \leq u$. For each such $\tau$, let $\ell_{\tau}$ denote the line supporting $\tau$. Partition $[0,1]^2$ into a grid of $u\times u$ squares. Within each of these squares $Q$, consider the set of points $P\cap Q$, and define the set of lines
\begin{equation*}
	L_Q = \left\{\ell_{\tau}\, :\, \tau \in {P \cap Q \choose 2}\right\}.
\end{equation*}
Then we make the line set $L = \bigcup_Q L_Q$. In order to prove an initial estimate of the form (\ref{eq:roth_initial_estimate}) for $P$ and $L$ at the scale $w_i$, we must show that for most of the lines $\ell \in L$, 
\begin{equation}\label{eq:line_is_good}
	\#\{p \in \T_{w_i}(\ell)\} \gtrsim w_i|P|. 
\end{equation}

Let us work one square $Q$ at a time. The initial scale $w_i$ will be much larger than $u$, so the tubes $\T_{w_i}(\ell)$ will contain the square $Q$. Note also that the tube $\T_{w_i}(\ell)$ is essentially determined by $\theta(\ell)$ up to an angular resolution of $\sim w_i$. Say that a direction $\theta \in \left\{w_i/100,2w_i/100,\ldots,1\right\}$ is {\it{good}} if the $w_i \times 1$ tube in direction $\theta$ containing $Q$ has $\gtrsim w_i |P|$ points of $P$, otherwise say $\theta$ is bad. Using a double counting argument, we can ensure that for most of the squares $Q$ in the partition there are not-too-many bad directions. The main challenge is to show that the set of directions spanned by the lines in $L_Q$ is not concentrated in the small number of bad directions. 
\begin{figure}[h]
\centering
\includegraphics[scale=0.25]{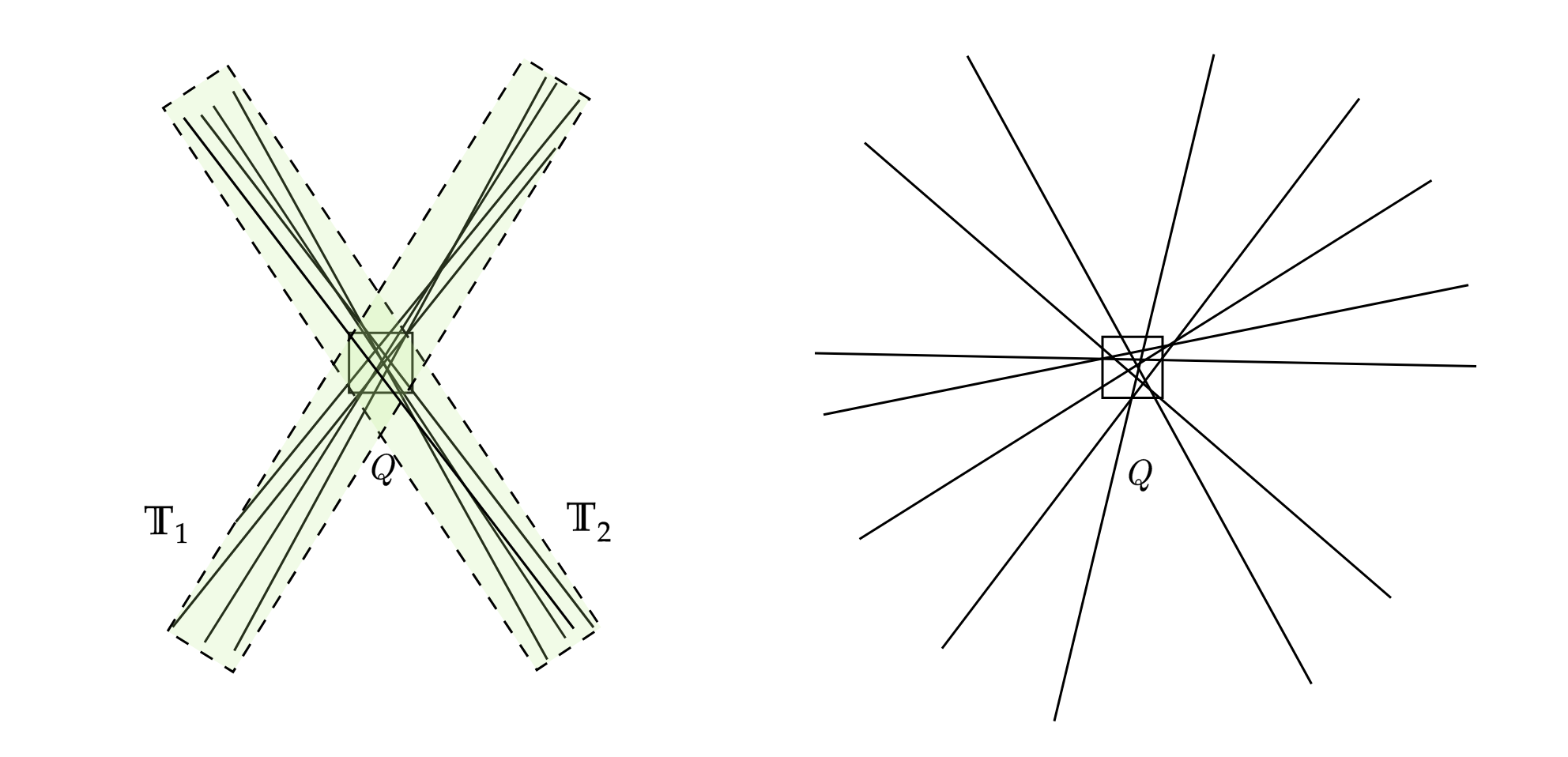}
\caption{A set of lines $L_Q$ spanned by points inside a box $Q$. On the left, the directions $\theta(\ell)$ are concentrated in a few tubes $\mathbb{T}_i$ and we cannot prove an initial estimate. On the right, the directions are spread out and many lines avoid `bad' tubes.}
\label{fig:tubes}
\end{figure}

A crucial observation is that if $P \cap Q$ satisfies an $s$-dimensional Frostman condition for some $s > 1$ then a discretized version of Theorem \ref{Mars} implies that the direction set is well spread out.
Before we state the Theorem, here is an important definition.

\bigskip

{\bf{Definition}}. Let $P \subset [0,1]^2$ be a set of points. We say $P$ is {\it{$s$-regular at all scales with constant $C$}} if for all $w\times w$ squares $Q$, the following inequality holds:
\begin{equation*}\label{eq:s_regularity_pts_all}
    |P \cap Q| \leq C\max\left\{1, |P|w^s\right\}. 
\end{equation*}

We also say $P$ is {\it{$s$-regular above scale $\delta$ with constant $C$}} if
\begin{equation}\label{eq:s_regularity_pts_above_delta}
    |P\cap Q| \leq C |P|w^s\quad \text{ for all $w > \delta$}. 
\end{equation}

A variant of these definitions goes back to the work of Katz and Tao \cite{KT01}.

\begin{theorem}\label{marstrand}
Let $s > 1$, $\sigma < 1$, and $C > 0$. There exists $K = K(C, s, \sigma)$ such that the following holds. Let $P \subset [0,1]^2$ be $s$-regular above scale $\delta$ with constant $C$. Then there is a set of lines 
\begin{equation*}
	L \subset \{\ell_{x,y}\, :\, x, y \in P\}
\end{equation*}
formed from pairs in $P$ such that $|L| \geq K^{-1} |P|^2$ and 
\begin{equation*}
	\#\{\ell \in L\, :\, \theta(\ell) \in I\} \leq K |L|\, |I|^{\sigma},\quad \text{for all intervals $I \subset S^1$ with $|I| \geq \delta$}. 
\end{equation*}
\end{theorem}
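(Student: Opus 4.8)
The plan is to run a pigeonhole/induction-on-scales argument that realizes the discretized Marstrand projection theorem (Theorem~\ref{Mars}) with explicit quantitative control, turning the $s$-regularity hypothesis with $s>1$ into the stated $\sigma$-spreading of the direction set for some $\sigma < 1$. Fix a dyadic-type partition of $[0,1]^2$ into $\delta$-squares and, for each pair of such squares $(Q_1,Q_2)$ at separation $\rho$, record the ``pair direction'' $\theta(Q_1,Q_2)\in S^1$, which is well-defined up to an angular error $\lesssim \delta/\rho$. The candidate line set $L$ will consist of $\ell_{x,y}$ for $x,y \in P$ ranging over a large but structured subfamily of such pairs; the count $|L|\ge K^{-1}|P|^2$ should follow once we discard only a $K^{-1}$-fraction of pairs. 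The crux is the direction-concentration bound $\#\{\ell\in L:\theta(\ell)\in I\}\le K|L||I|^{\sigma}$.

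First I would reduce to the key combinatorial estimate: for every arc $I$ with $|I|\ge\delta$, the number of pairs $(x,y)\in P\times P$ with $x-y$ pointing into $I$ is $\lesssim |I|^{\sigma}|P|^2$. I would prove this by a scale-by-scale (multiscale) analysis: decompose the separation $|x-y|$ into dyadic scales $\rho = 2^{-k}$, and for each $\rho$ partition $[0,1]^2$ into $\rho$-squares. The number of pairs at separation $\sim\rho$ with direction in $I$ is at most (number of $\rho$-squares) $\times$ (typical mass of a $\rho$-square)$^2$ $\times$ (number of $\rho$-squares $Q'$ such that $\theta(Q,Q')\in I$). The $s$-regularity above scale $\delta$ controls the mass of each $\rho$-square by $C|P|\rho^{s}$, and the last factor is at most $\sim |I|\cdot\rho^{-1}$ (the squares seen in a fixed arc-of-directions from $Q$ lie in a thin sector of width $|I|\cdot\rho^{-1}$ and unit length, so there are $O(\max\{1,|I|/\rho\})$ of them at each distance, summed over $O(\rho^{-1})$ distances gives a further factor $\rho^{-1}$... here one must be careful to instead fix the annulus of radius $\sim\rho$, giving $O(|I|/\rho \cdot 1)$ when $|I|\ge\rho$ and $O(1)$ otherwise). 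Multiplying and summing the geometric series in $\rho$, the exponent bookkeeping yields a bound $\lesssim_{C,s} |I|^{\sigma}|P|^2$ for any $\sigma<\min\{1,s-1\}$ — wait, we want this for any $\sigma<1$, so one must instead exploit $s>1$ more cleverly.

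Here is the point where the Marstrand mechanism proper enters, and this is the main obstacle: the naive dyadic count above does \emph{not} by itself give $\sigma$ arbitrarily close to $1$ — it loses because a single $\rho$-square can ``see'' $Q$ in the fixed arc $I$. The fix is the standard Marstrand observation that if two pairs $(x,y)$ and $(x',y')$ both have direction in a short arc $I$ and $x,x'$ lie in a common small square, then $y,y'$ are forced into a thin tube; iterating the $s$-regularity against this ``$X$-ray'' geometry (a bush/Córdoba-type $L^2$ argument, or equivalently bounding $\int_{S^1}\big(\#\{(x,y):\theta(x-y)\approx\theta\}\big)^2\,d\theta$ and using that the measure is $s$-Frostman with $s>1$) gives the genuine dimension-$1$ conclusion for the direction set. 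Concretely I would: (i) set $\mu$ = counting measure on $P$ normalized to mass $1$, (ii) bound the ``energy'' $\int\int |\theta(x-y)-\theta(x'-y')|^{-\sigma}\,d\mu^{\otimes 2}$ by splitting into scales and using the $s$-regularity twice, (iii) conclude via Chebyshev that after removing a $K^{-1}$-fraction of bad pairs, the remaining direction set obeys the arc bound with the stated $\sigma<1$, the implied constant $K$ depending only on $C,s,\sigma$. The bookkeeping to get $\sigma$ up to $1$ (rather than $s-1$) — i.e., genuinely using the second power and the hypothesis $s>1$ rather than $s=1$ — is where all the care goes; everything else is routine pigeonholing and dyadic summation.

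Finally I would package the conclusion: the surviving pairs, grouped back into actual lines $\ell_{x,y}$ (collapsing the $O(1)$ multiplicity coming from several pairs spanning nearly the same line), form the desired $L$ with $|L|\gtrsim_{K}|P|^2$, and the arc bound transfers from pairs to lines up to adjusting $K$. I expect no difficulty in the passage from pairs to lines or in the regularity-above-scale-$\delta$ truncation (all arcs have $|I|\ge\delta$, matching exactly the range where $s$-regularity is assumed); the entire weight of the proof is the energy estimate of step (ii)–(iii).
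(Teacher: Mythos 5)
Your route is genuinely different from the paper's: the paper does not prove Theorem \ref{marstrand} directly, but deduces it formally (in Appendix \ref{app:proof_of_dir_set_estimates}) from Theorem \ref{thm:finite_set_dir_set}, which in turn is a discretization of the Orponen--Shmerkin--Wang radial projection theorem \cite[Corollary 2.18]{OSW}; the only content of the paper's argument is the observation that $s$-regularity with $s>1$ forces $|P\cap T|\leq 2C|P|w^{s-1}\lll |P|$ for $w$-tubes, so the tube non-concentration hypothesis is automatic. A self-contained direct proof would therefore be valuable. Unfortunately, your sketch has a genuine gap at exactly the step you flag as ``where all the care goes'': the energy/second-moment estimate is not carried out, and in the form you state it, it is false. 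Take $P=A\times B$ with $A$ a $1$-regular $\delta$-separated set and $B$ an $(s-1)$-regular Cantor-type set; then $P$ is $s$-regular above scale $\delta$, but the number of pairs whose direction lies within $t$ of horizontal is $\gtrsim t^{s-1}|P|^2$ (take $|a-a'|\sim 1$ and $|b-b'|\leq t/2$). Hence, writing $N(I)$ for the number of pairs with direction in an arc $I$ of length $t$, one has $\sum_I N(I)^2\gtrsim t^{2(s-1)}|P|^4$, which is $\ggg t\,|P|^4$ whenever $1<s<3/2$; equivalently, the $\sigma$-energy of the (unpruned) direction measure diverges for every $\sigma>2(s-1)$. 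So the pipeline ``bound the energy of the full direction measure, then Chebyshev away a $K^{-1}$-fraction of pairs'' can only ever deliver $\sigma\leq 2(s-1)$, which for $s$ close to $1$ is barely better than the trivial exponent $s-1$ and nowhere near the required ``every $\sigma<1$''.

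The underlying issue is that the bad pairs (those in over-concentrated arcs) must be removed \emph{before} any $L^2$-type quantity is formed, and proving that they constitute only a small fraction of all pairs is essentially equivalent to the theorem itself. What is actually needed is a weak-type or $L^p$ bound with $p=p(s)$ close to $1$ for radial projections of $s$-regular measures with $s>1$ --- this is Orponen's radial projection theorem (\cite{Orp}) and the bootstrapping in \cite{OSW}, and it is not obtained by a C\'ordoba-type bush/$L^2$ argument. Your reduction of the lemma to a pair-counting statement, the dyadic pigeonholing, the Chebyshev pruning scheme, and the passage from pairs to lines are all fine; but the one inequality that carries the weight of the proof is asserted rather than proved, and the specific mechanism you propose for it does not work in the range $1<s<3/2$ that the theorem must cover.
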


In order to use Theorem \ref{marstrand} we need to choose the set of squares more carefully: instead of starting with a partition of $[0,1]^2$ into $u \times u$ squares we will greedily construct a collection $\{Q\}_{Q \in \mathcal{Q}}$ of pairwise disjoint squares such that each $P \cap Q$ is $s$-dimensional for some fixed $s > 1$. 
Each set of lines $L_{Q}$ thus spans a well spread out set of directions, which ensures \eqref{eq:line_is_good} holds for most $\ell \in L$. It follows that $B(w; P, L) \gtrapprox 1$ for some initial scale $w$ and a set of lines $L$ spanned by pairs of points from $P$ at distance at most $u$. Here the distance $u$ is the most popular side length among the squares from $\mathcal Q$. 
Because of the way we construct $\mathcal Q$, we do not get to choose a specific value of $u$ but have to work with an arbitrary $u$ in a certain range.

By combining the lower bound $B(w; P, L) \gtrapprox 1$ with the high-low method we can recover the classical bound $\Delta(n) \lessapprox n^{-8/7}$ of Koml\'os--Pintz--Szemer\'edi. We present full details of this argument in Section \ref{KPSsection} as a warm-up before the proof of the improved bound. 

\bigskip

To improve on the $8/7$ exponent we use the following discretized version of Orponen, Shmerkin, and Wang's Theorem \ref{thm:OSW_dir_set}.

\begin{theorem}\label{thm:finite_set_dir_set}
For all $0 < s < 1 $, $\sigma < s$, and $C, w > 0$ there exists $\tau = \tau(\sigma, s) > 0$ and $K = K(C, w, \varepsilon, s, \sigma) < 0$ such that the following holds. 
Let $P \subset [0,1]^2$ be $s$-regular above scale $\delta$ with constant $C$. Suppose 
\begin{equation}\label{eq:finite_set_tube_cond}
	|P\cap T| \leq \tau |P|
\end{equation}
for all $w$-tubes $T$. There is a set of lines
\begin{equation*}
	L \subset \{\ell_{x,y}\, :\, x, y \in P\}
\end{equation*}
formed from pairs in $P$ such that $|L| \geq K^{-1}|P|^2$ and
\begin{equation*}
	\#\{\ell \in L\, :\, \theta(\ell) \in I\} \leq K|L|\, |I|^{\sigma},\quad \text{$I \subset S^1$ an interval, $|I| \geq \delta$}.
\end{equation*}
\end{theorem}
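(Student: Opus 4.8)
The plan is to deduce the discretized, "regular at all scales" statement from the qualitative continuous result, Theorem \ref{thm:OSW_dir_set}, by a compactness/limiting argument combined with a pigeonholing to produce the line set $L$. This is the standard route for converting Hausdorff-dimension statements about direction sets into $\delta$-discretized counting statements, and it is why the constant $K$ is allowed to depend on everything in sight while $\tau$ depends only on $\sigma$ and $s$.

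First I would set up the contradiction. Fix $0<s<1$, $\sigma<s$, and suppose the statement fails: then for the candidate $\tau=\tau(\sigma,s)$ (to be pinned down) there is, for every large $K$, a scale $\delta_K$ and a set $P_K\subset[0,1]^2$ that is $s$-regular above scale $\delta_K$ with constant $C$, satisfies the non-concentration hypothesis $|P_K\cap T|\le \tau|P_K|$ for all $w$-tubes $T$, and yet for \emph{every} line set $L\subset\{\ell_{x,y}:x,y\in P_K\}$ with $|L|\ge K^{-1}|P_K|^2$ there is an interval $I_L$, $|I_L|\ge\delta_K$, with $\#\{\ell\in L:\theta(\ell)\in I_L\} > K|L||I_L|^\sigma$. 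One first reduces to the case $\delta_K\to 0$ (if the $\delta_K$ stay bounded below along a subsequence, the $s$-regularity above that fixed scale together with $s<1$ forces $|P_K|$ bounded, and the statement becomes a finite, easily handled matter, or the hypotheses become vacuous). Replacing $P_K$ by a uniformly distributed sum of point masses $\mu_K=|P_K|^{-1}\sum_{p\in P_K}\delta_p$ and passing to a weak-$*$ limit $\mu$ along a subsequence, the $s$-regularity above scale $\delta_K\to 0$ passes to the limit and yields that $\mu$ is a probability measure satisfying the Frostman bound $\mu(Q)\lesssim r^s$ for all squares $Q$ of side $r$; in particular $\dim_H(\operatorname{spt}\mu)\ge s>\sigma$. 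The tube hypothesis with $\tau$ small guarantees the limit is not supported on a line (if $\mu$ were supported on a line, then for small $\tau$ one could find, already at finite stage, a $w$-tube containing more than $\tau|P_K|$ points — this is where the choice of $\tau=\tau(\sigma,s)$ enters, and it must be made \emph{quantitatively}, before taking the limit).

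Next I would invoke Theorem \ref{thm:OSW_dir_set} (or, since we only need $\sigma<s<1$, the easier Theorem \ref{Mars} suffices here because $\dim_H\operatorname{spt}\mu\ge s$ can exceed $1$ only after a harmless thickening — but as stated $s<1$, so we genuinely use the OSW refinement): $\dim_H S(\operatorname{spt}\mu)\ge\min\{1,\dim_H\operatorname{spt}\mu\}\ge s>\sigma$. By Frostman's lemma there is a probability measure $\nu$ on $S(\operatorname{spt}\mu)$ with $\nu(I)\lesssim |I|^{\sigma'}$ for some $\sigma<\sigma'<s$. One then pushes this back down to the finite level: using that the direction map $(x,y)\mapsto (x-y)/|x-y|$ takes the product measure $\mu\times\mu$ (restricted away from the diagonal) to a measure dominating, up to constants, a measure with a $\sigma'$-Frostman bound on $S^1$ — this is exactly the content one extracts from the limiting argument, run quantitatively — one concludes that for $K$ large enough the finite set $P_K$ has the property that $\#\{(x,y)\in P_K^2: (x-y)/|x-y|\in I\}\le K'|P_K|^2|I|^{\sigma}$ for all $|I|\ge\delta_K$, after removing a small exceptional set of pairs (those too close to the diagonal, or contributing to a concentrated tube). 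Declaring $L$ to be the set of lines through the surviving pairs gives $|L|\ge K^{-1}|P_K|^2$ and the desired non-concentration bound, contradicting the assumed failure.

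The main obstacle is making the limiting argument \emph{quantitative and uniform}: Theorem \ref{thm:OSW_dir_set} is purely qualitative, so a naive compactness argument produces a $K$ with no control, but more seriously one must ensure the \emph{same} $\tau$ works for all the $P_K$ and that the exceptional pairs discarded (near-diagonal pairs, and pairs whose direction falls in an over-concentrated interval) number at most a $(1-K^{-1})$-fraction of all $\binom{|P_K|}{2}$ pairs. Controlling the near-diagonal pairs uses the $s$-regularity of $P_K$ above scale $\delta_K$ (which bounds how many pairs are within any dyadic distance), and controlling the over-concentrated directions is precisely where the tube hypothesis \eqref{eq:finite_set_tube_cond} is spent: a direction interval $I$ carrying too many pairs would, by a dyadic-pigeonhole on distances, force many points of $P_K$ into a single $w$-tube. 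Threading these two budgets simultaneously — so that the discarded pairs are a genuinely negligible fraction — is the delicate part; everything else is soft functional-analytic limiting plus the black-box input of \cite{OSW}.
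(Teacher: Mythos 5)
Your proposal has a genuine gap: you try to deduce the uniform discretized counting bound from the \emph{qualitative} Hausdorff-dimension statement, Theorem \ref{thm:OSW_dir_set}, by a compactness argument, and this transfer cannot be made to work. Two specific failures. First, the contradiction hypothesis is a statement at scale $\delta_K \to 0$ (for every large $L$ some interval of length $\geq \delta_K$ is over-concentrated), while the weak-$*$ limit $\mu$ of the counting measures retains no information at scale $\delta_K$; a lower bound on $\dim_H S(\operatorname{spt}\mu)$ is a statement about the closure of the limit set and says nothing about how the directions of pairs of $P_K$ are distributed at resolution $\delta_K$ --- the over-concentrated intervals $I_{L,K}$ can shrink to points and leave no trace in the limit. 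Second, and more fundamentally, your ``push back down'' step conflates two different objects: Frostman's lemma applied to $S(\operatorname{spt}\mu)$ produces \emph{some} measure on the direction set with a $\sigma'$-Frostman bound, but that measure has no relation to the pushforward of $\mu\times\mu$ under the direction map, which is the object you would actually need to control and which can be badly concentrated even when $\dim_H S(\operatorname{spt}\mu)=1$. The assertion that the direction map carries $\mu\times\mu$ to a measure ``dominating, up to constants, a measure with a $\sigma'$-Frostman bound'' is precisely the hard quantitative content of the radial projection theorem; it does not follow from the dimension statement, which is itself \emph{derived from} the quantitative discretized estimates rather than the other way around. (A smaller error: $s$-regularity above a fixed scale $\delta_0$ does not bound $|P_K|$, since the condition $|P\cap Q|\leq C|P|w^s$ only caps each square at a \emph{fraction} of $|P|$, so your preliminary reduction to $\delta_K\to 0$ also does not go through as stated.)

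The correct input is the quantitative, measure-level result {\cite[Corollary 2.18]{OSW}} (restated as Theorem \ref{thm:OSW_original_thm} in the appendix): for measures $\mu,\nu$ satisfying an $s$-Frostman condition at \emph{all} scales, separated and not concentrated on $w$-tubes, there is a set $G$ with $(\mu\times\nu)(G)\geq 1-\varepsilon$ such that the radial projections of $\nu|_{G|_x}$ from every $x$ satisfy a uniform $\sigma$-Frostman bound on $r$-tubes for all $r>0$. The paper's deduction is then a concrete discretization rather than a limiting argument: pick two $1/20C$-separated squares each carrying $\gtrsim_C |P|$ points, mollify the two counting measures at scale $\delta/100$ so that the Frostman and thin-tube hypotheses hold at all scales, apply the quantitative theorem with $\varepsilon=1/2$, define $\tilde G$ as the pairs $(p_1,p_2)$ for which $G$ occupies at least a quarter of $B_{p_1}\times B_{p_2}$, and convert the tube bound for $\nu(T\cap G|_x)$ into the counting bound $|\tilde G|_{p_1}\cap T|\lesssim K|P_2|r^{\sigma}$ by Fubini. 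If you want to salvage your approach, the black box must be the discretized OSW estimate, not the dimension statement.
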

Both Theorem \ref{marstrand} and Theorem \ref{thm:finite_set_dir_set} follow from {\cite[Corollary 2.18]{OSW}} by relatively standard arguments. We sketch this deduction in Appendix \ref{app:proof_of_dir_set_estimates}. 

By using Theorem \ref{thm:finite_set_dir_set} in place of Theorem \ref{marstrand}, we get an initial estimate for the line set $L_Q$ even if $P\cap Q$ is less than one dimensional. In the end we use this edge to improve the overall bounds, but there are significant technicalities. 

For one, the condition (\ref{eq:finite_set_tube_cond}) adds a complication: it is possible that the intersection $P\cap Q$ is essentially contained in a tube $T$ of width $w < u$. In this case the direction of lines in $L_Q$ is concentrated near $T$, so we get a weaker initial estimate. On the one hand, if $w$ is fairly large, the initial estimate is still good enough to run our argument. On the other hand, if $w$ is very small then we can find a triangle of a small area inside the set $P\cap Q$ alone. 
Unfortunately, the `fairly large' and `very small' regimes for $w$ do not always overlap. It turns out that if the initial distance scale $u$ is below a certain threshold $u_1$ then the conditions on $w$ do overlap and we get an improved bound. 
To deal with this technicality we split into two cases, the \textit{square case} and the \textit{rectangle case}. We greedily pick a collection of squares $\mathcal Q$ with sides $\ge u_1$ and a collection of rectangles $\mathcal R$ with larger side $\le u_1$ such that: for $Q \in \mathcal Q$ the set $P \cap Q$ is $s_1$-dimensional for some fixed $s_1 > 1$ and for $R \in \mathcal R$ the set $P \cap R$ is $s_2$-dimensional for some fixed $0<s_2 < 1$.

\section{Lower bounds for incidences}\label{sec:lower_bounds_for_incidences}

\subsection{Statement of results}

Here is a basic lower bound on the number of incidences.

\begin{lemma}[Basic initial estimate]\label{lem:basic_initial_estimate_weighted}
Let $P \subset [0,1]^2$ be a set of points. For each $p \in P$, let $L_p$ be a set lines through $p$ such that $|L_p|$ is the same for all $p$, and 
\begin{equation}
    \#\{\ell \in L_p\, :\, \theta(\ell) \in I\} \leq \mu |L_p|\quad \text{for any interval $I \subset S^1$ with $|I| = \eta$.}\label{eq:basic_initial_not_concentrated}
\end{equation}
Let $L = \bigcup_p L_p$. Then 
\begin{equation*}
	\frac{|P \cap \T_{\eta}(\ell)|}{\eta |P|} \gtrsim \mu^{-1}\eta
\end{equation*}
for $\geq \frac{1}{2}|L|$ lines $\ell \in L$. 
\end{lemma}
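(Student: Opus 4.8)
The plan is to count pairs $(p,\ell)$ with $p \in P$ and $\ell \in L$ passing close to $p$, and then argue by averaging. Fix $\ell \in L$; write $B(\ell) = |P \cap \T_\eta(\ell)|$. The quantity we want to lower-bound on average is $B(\ell)/(\eta|P|)$. The natural way to access it is to sum over $\ell$: we have $\sum_{\ell \in L} B(\ell) = \sum_{p \in P}\#\{\ell \in L : p \in \T_\eta(\ell)\}$, so the real task is to show that most points $p$ lie in the strip $\T_\eta(\ell)$ of many lines $\ell \in L$.

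The key local computation is this: fix a point $q \in P$ and look at the lines $L_q$ through $q$. For another point $p \in P$, the line $\ell \in L_q$ satisfies $p \in \T_\eta(\ell)$ exactly when $\ell$ makes angle $\lesssim \eta/|p-q|$ with the segment $qp$ — i.e. when $\theta(\ell)$ lies in an arc of length $\asymp \eta/|p-q|$ around the direction of $p-q$. Covering $S^1$ by $O(|p-q|/\eta)$ arcs of length $\eta$ and applying the non-concentration hypothesis \eqref{eq:basic_initial_not_concentrated}, we get that at most $O(\mu|L_q|\cdot |p-q|/\eta)$... but that is an upper bound, and we want a lower bound. So instead I would run the count the other way: fix $\ell \in L$, based at some $p$, and ask how many $q \in P$ have $q \in \T_\eta(\ell)$; this is exactly $B(\ell)$, circular.

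The cleanest route is therefore a double-counting of incidences between $P$ and $L$ combined with a pigeonhole argument based on the total line count. Since each $L_p$ has the same cardinality, say $m$, we have $|L| \le |P|m$, and in fact (after discarding coincidences, which only helps) $|L|$ is comparable to $|P|m$. Now for each $p$, trivially $p \in \T_\eta(\ell)$ for all $\ell \in L_p$, giving $\sum_{\ell}B(\ell) \ge \sum_p |L_p| = |P|m \gtrsim |L|$. That only gives an average of $B(\ell) \gtrsim |L|/|L| = 1$ point per strip, which is too weak — it does not see the factor $\mu^{-1}\eta|P|$. The improvement must come from counting, for a typical line $\ell$ based at $p$, the \emph{other} points of $P$ inside $\T_\eta(\ell)$, and here is where non-concentration enters with the right sign: given $p$ and a target direction, the number of $q$ with $\theta(pq)$ in a fixed arc of length $\eta$ must be controlled, but we need a \emph{lower} bound on how spread out the \emph{points} are, not the lines.

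Re-examining, I believe the intended argument is: fix $p$; the lines in $L_p$ have directions spread out so that no arc of length $\eta$ contains more than $\mu|L_p|$ of them. Partition $S^1$ into $\asymp \eta^{-1}$ arcs; at least $(1-\mu)|L_p| \cdot$(number of arcs)${}^{-1}$... rather, the number of \emph{distinct} $\eta$-arcs hit by directions of $L_p$ is $\ge \mu^{-1}$ (since $\mu|L_p|$ per arc and $|L_p|$ total). For each such arc, the tube $\T_\eta(\ell)$ for $\ell \in L_p$ in that arc sweeps out, within $[0,1]^2$, a region; summing the point-counts of $P$ over a maximal $\eta$-separated-in-direction subfamily of $L_p$ and using that these tubes through $p$ are essentially disjoint away from $p$, we get $\sum_{\ell \in L_p}|P \cap \T_\eta(\ell)| \gtrsim \mu^{-1}\eta^{-1}\cdot(\eta|P|)\cdot$(correction), since the $\mu^{-1}$-many essentially disjoint tubes each of "width-fraction" $\eta$ cover a $\mu^{-1}\eta$ fraction of the square and hence $\mu^{-1}\eta|P|$ points on average — wait, they cover a $\mu^{-1}\cdot\eta\cdot\eta^{-1}=\mu^{-1}$ fraction, capped at $1$. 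The bookkeeping needs care, but the punchline is $\sum_{\ell \in L_p}|P\cap\T_\eta(\ell)| \gtrsim \mu^{-1}\eta|P|\cdot|L_p|\cdot\eta$ after accounting for how many lines sit in each used arc. Summing over $p$ and dividing by $|L| \sim |P|m = |P||L_p|$ gives average $B(\ell)/(\eta|P|) \gtrsim \mu^{-1}\eta$. A Markov/pigeonhole step then upgrades "average" to "at least half the lines", losing only a constant factor absorbed into $\gtrsim$.

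\medskip

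\emph{Main obstacle.} The delicate point is the disjointness / covering estimate for the tubes $\T_\eta(\ell)$, $\ell \in L_p$, emanating from a common point $p$: two such tubes overlap in a neighborhood of $p$ of size $\sim\eta^2$, so they are only \emph{essentially} disjoint, and one must check this overlap contributes a lower-order number of points — which uses that $P \subset [0,1]^2$ so a single $\eta\times\eta$ square near $p$ holds $O(\eta^2|P|)$ points on average, but pointwise this could fail without a regularity hypothesis. I expect the actual proof sidesteps this by working with the \emph{smoothed} incidence count $I(\eta;P,L)$ and Cauchy–Schwarz as in Section~\ref{subsec:high_low_proof}, or by simply counting direction-arcs and invoking that $P\subset[0,1]^2$ forces $|P\cap Q|$ to not be a worry in aggregate; in any case the heart of the matter is converting the \emph{direction} non-concentration of the lines into a \emph{spatial} spreading of the incidence pattern, and that conversion — turning \eqref{eq:basic_initial_not_concentrated} into a genuine lower bound rather than the trivial "one point per tube" bound — is the step I would spend the most effort getting right.
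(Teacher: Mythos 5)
Your strategy---lower-bound the average of $B(\ell)=|P\cap\T_{\eta}(\ell)|$ over $\ell\in L$ and then ``upgrade to at least half the lines by Markov''---breaks at its last step, and the step before it has the problem you flagged yourself. Markov converts a lower bound on an average into a statement about a \emph{majority} only if one also controls the upper tail: from $\frac{1}{|L|}\sum_{\ell}B(\ell)\geq A$ and the trivial bound $B(\ell)\leq|P|$ one gets only that a fraction $\gtrsim A/|P|=\mu^{-1}\eta^{2}$ of the lines satisfies $B(\ell)\gtrsim A$, which is nowhere near $1/2$. The average genuinely can be carried by a minority of lines (e.g.\ $P$ an isolated point $p_0$ plus a tight cluster: almost every line of $L_{p_0}$ contains only $p_0$, while the few aimed at the cluster contribute $\sim|P|$ each), so no averaging argument can yield the ``$\geq\frac{1}{2}|L|$'' conclusion. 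Separately, the average lower bound itself is unjustified: knowing that $\mu^{-1}$ essentially disjoint tubes through $p$ cover a $\mu^{-1}\eta$ fraction of the \emph{area} of $[0,1]^2$ says nothing about how many \emph{points} of $P$ they contain, since no equidistribution of $P$ is assumed. That is not bookkeeping; it is an obstruction to the whole first half of the argument.

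The paper's proof avoids averaging and counts the bad lines directly, using only upper bounds. Fix $\lambda>0$ and call an $\eta$-tube \emph{bad} if it contains $\leq\lambda\eta|P|$ points of $P$, and $\ell$ bad if $\T_{\eta}(\ell)$ is bad. There is a fixed family $\mc T^{\eta}$ of $\sim\eta^{-2}$ tubes of width $\eta/2$ such that every $\T_{\eta}(\ell)$ contains some $T\in\mc T^{\eta}$ with $\ell\subset T$; if $\ell$ is bad then that $T$ is bad. Each bad $T$ is charged by at most $\sum_{p\in T}\#\{\ell\in L_p:\T_{\eta}(\ell)\supset T\}$ lines; the inner count is $\lesssim\mu|L_p|$ because such $\ell$ have $\theta(\ell)$ within $O(\eta)$ of $\theta(T)$ (this is where \eqref{eq:basic_initial_not_concentrated} enters), and the outer sum has $\leq\lambda\eta|P|$ terms precisely because $T$ is bad. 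Multiplying gives $\lesssim\eta^{-2}\cdot(\lambda\eta|P|)\cdot(\mu|L_p|)=\lambda\mu\eta^{-1}|L|$ bad lines, which is $\leq\frac{1}{2}|L|$ upon choosing $\lambda=c\,\mu^{-1}\eta$. Note that the only fact about the spatial distribution of $P$ ever used is the \emph{definition} of badness; no lower bound on any point count is required. This device---charging each bad line to one of boundedly many standard tubes and bounding the multiplicity of each charge by the direction non-concentration---is the idea your proposal is missing.
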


For example, suppose that for each $p$, $L_p$ is a set of $\lfloor \eta^{-1}/1000\rfloor$ many $\eta$-separated lines. Then we can take $\mu = 1000\eta$, and we obtain $B(\eta; P, L) \gtrsim 1$. 

The following proposition gives an incidence lower bound for points and lines which are sufficiently spread out at scales $w_f$, $\eta$ respectively, as long as $w_f$ is not too small.

\begin{proposition}\label{prop:incidence_lower_bound_sep_weighted}
Let $0 < w_f < \eta < 1$ be parameters. 
Let $P \subset [0,1]^2$ be a set of points and for each $p \in P$, let $L_p$ be a set of lines through $p$ such that $|L_p|$ is the same for all $p$. Suppose that for any $w_f \times w_f$ square $Q$, 
\begin{equation}
    |P\cap Q| \leq \mu_P\, |P|.\label{eq:lower_bd_prop_upper_bd_pts_in_square}
\end{equation}
Suppose that for any $p \in P$, 
\begin{equation}
    \#\{\ell \in L_p\, :\, \theta(\ell) \in I\} \leq \mu_L |L_p|\quad \text{for any interval $I \subset S^1$ with $|I| = \eta$.} \label{eq:line_concentration_prop_sep}
\end{equation}
Let $L = \bigcup_p L_p$. There is a constant $c_0 > 0$ such that if 
\begin{equation}
    \mu_P\, \mu_L^2 \leq c_0\,e^{-|\log  w_f|^{0.9}}\, w_f^2 \eta \label{eq:mu_Pmu_Lhypothesis}
\end{equation}
holds, then
\begin{equation*}
    B(w_f; P, L) \gtrsim e^{-|\log w_f|^{0.6}}\, \mu^{-1}_L \eta. 
\end{equation*}
\end{proposition}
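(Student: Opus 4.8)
The plan is to combine the basic initial estimate (Lemma \ref{lem:basic_initial_estimate_weighted}) applied at the large scale $\eta$ with the multi-scale high-low inequality \eqref{multscale} to push the lower bound down from scale $\eta$ to the final scale $w_f$. First I would apply Lemma \ref{lem:basic_initial_estimate_weighted} with its $\mu$ equal to our $\mu_L$ and its $\eta$ equal to our $\eta$; hypothesis \eqref{eq:line_concentration_prop_sep} is exactly the non-concentration hypothesis \eqref{eq:basic_initial_not_concentrated} needed, so we conclude that $|P \cap \T_\eta(\ell)| \gtrsim \mu_L^{-1}\eta^2 |P|$ for at least half the lines $\ell \in L$, and hence that $B(\eta; P, L) \gtrsim \mu_L^{-1}\eta$. (If one wants a clean lower bound for the \emph{whole} line set $L$ rather than half of it, one can either discard the bad lines — which only changes $|L|$ by a factor of $2$ and does not affect the normalized count by more than a constant — or observe that the inductive step is robust to this.) This is the initial estimate at scale $w_i := \eta$.

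Next I would feed this into \eqref{multscale} with $w_i = \eta$ and final scale $w_f$. That inequality gives
\[
B(w_f; P, L) \;\gtrsim\; B(\eta; P, L) \;-\; \log\!\Bigl(2 + \tfrac{\eta}{w_f}\Bigr)\,\sup_{w_f < w < \eta}\Err(w; P, L),
\]
so it suffices to show the error term is smaller than, say, half of $B(\eta; P, L) \gtrsim \mu_L^{-1}\eta$. Recall $\Err(w; P, L)^2 = \frac{M_P(w\times w)}{|P|}\,\frac{M_L(w\times 1)}{|L|}\,w^{-3}$. I would bound the two concentration quantities as follows. For $M_P(w\times w)$ with $w \geq w_f$: a $w\times w$ square is covered by $O((w/w_f)^2)$ squares of side $w_f$ (more precisely, $O(1 + (w/w_f)^2)$), so hypothesis \eqref{eq:lower_bd_prop_upper_bd_pts_in_square} gives $M_P(w\times w) \lesssim (1 + (w/w_f)^2)\,\mu_P |P| \lesssim (w/w_f)^2 \mu_P |P|$ since $w \geq w_f$. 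For $M_L(w\times 1)$: a $w$-tube $T$ forces all lines $\ell \subset T$ through a fixed point to have direction in an interval of length $O(w)$, but more usefully — since our lines come in pencils $L_p$ of equal size and each $L_p$ is $(\mu_L,\eta)$-non-concentrated — a $w$-tube with $w \leq \eta$ meets each pencil $L_p$ in at most $O(\mu_L |L_p|)$ lines, and the tube meets $O(1)$ of the $w_f\times w_f$... actually here I would argue via the points: a $w$-tube contains $\lesssim \mu_P|P|(w/w_f)^2$ points of $P$ if $w \ge w_f$ times the appropriate length factor — more carefully a $w\times 1$ tube is covered by $O(w_f^{-1})$ squares of side $w_f$ lying along it so contains $O(w_f^{-1}\mu_P|P|)$ points, wait — let me instead just bound $M_L(w\times 1)$ directly: each line in $L$ is $\ell_{x,y}$ for $x,y \in P$ with $x$ the base point of its pencil; a line contained in a $w$-tube $T$ has its base point among the $\lesssim w_f^{-1}\mu_P|P|$ points of $P$ in $T$, and for each such base point the pencil contributes $\lesssim \mu_L|L_p| = \mu_L |L|/|P|$ lines in any fixed angular window of width $O(w) \le O(\eta)$; so $M_L(w\times 1) \lesssim w_f^{-1}\mu_P\,\mu_L |L|$. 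Plugging both in,
\[
\Err(w; P, L)^2 \;\lesssim\; \frac{(w/w_f)^2\mu_P|P|}{|P|}\cdot\frac{w_f^{-1}\mu_P\mu_L|L|}{|L|}\cdot w^{-3} \;=\; \mu_P^2\,\mu_L\, w_f^{-3}\, w^{-1},
\]
which is largest at $w = w_f$, giving $\sup_w \Err(w) \lesssim \mu_P\,\mu_L^{1/2}\,w_f^{-2}$.

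So the inductive step succeeds as long as
\[
\log\!\Bigl(2 + \tfrac{\eta}{w_f}\Bigr)\cdot \mu_P\,\mu_L^{1/2}\,w_f^{-2} \;\ll\; \mu_L^{-1}\,\eta,
\]
i.e. $\mu_P\,\mu_L^{3/2} \ll \eta\, w_f^{2} / \log(2+\eta/w_f)$. Since $\log(2 + \eta/w_f) \leq |\log w_f|$ is much smaller than $e^{|\log w_f|^{0.9}}$, hypothesis \eqref{eq:mu_Pmu_Lhypothesis} — which gives $\mu_P\,\mu_L^2 \leq c_0 e^{-|\log w_f|^{0.9}} w_f^2\eta$, hence a fortiori $\mu_P \mu_L^{3/2} \le \mu_P\mu_L^2 \le c_0 e^{-|\log w_f|^{0.9}} w_f^2\eta$ when $\mu_L \le 1$ — more than suffices, and the slack $e^{-|\log w_f|^{0.9}}$ against the needed $e^{-|\log w_f|^{0.6}}$ and the $\log$ factor leaves room for the final bound $B(w_f) \gtrsim e^{-|\log w_f|^{0.6}}\mu_L^{-1}\eta$. (The precise bookkeeping of the sub-polynomial factors $e^{|\log w_f|^{0.6}}$ versus $e^{|\log w_f|^{0.9}}$ is exactly what \eqref{multscale}'s logarithmic loss and the hypothesis are calibrated for.)

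The main obstacle I expect is getting the bound on $M_L(w\times 1)$ right across the full range $w_f < w < \eta$ and making sure the covering/counting arguments for $M_P$ and $M_L$ interact correctly with the pencil structure — in particular the step where a $w$-tube is charged to its $O(w_f^{-1})$ constituent $w_f$-squares and then to the base points of pencils inside it. One has to be careful that we are using $w \le \eta$ (so the non-concentration hypothesis \eqref{eq:line_concentration_prop_sep} at scale $\eta$ actually applies to angular windows of width $w$) and $w \ge w_f$ (so the square-covering is efficient). A secondary technical point is handling the "half the lines" in Lemma \ref{lem:basic_initial_estimate_weighted}: restricting to the good half of $L$ changes $M_L$ and $|L|$ by constants only, so the error estimate is unaffected, but one should state this reduction cleanly at the start.
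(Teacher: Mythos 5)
Your overall strategy --- the basic initial estimate at scale $\eta$ followed by a high-low descent to $w_f$ --- is the same as the paper's, and your initial estimate $B(\eta;P,L)\gtrsim\mu_L^{-1}\eta$ is correct. But the inductive step has a genuine gap, and it is concealed by a reversed inequality. With a uniform bound on line concentration, $M_L(w\times 1)\lesssim |T\cap P|\,\mu_L|L_p|\lesssim (w w_f^{-2}\mu_P|P|)\,\mu_L|L_p| = ww_f^{-2}\mu_P\mu_L|L|$ (your covering count $O(w_f^{-1})$ should be $O(ww_f^{-2})$ for a $w\times 1$ tube, though this happens not to change the supremum of the error), one gets $\sup_{w_f\le w\le\eta}\Err(w)\lesssim\mu_P\mu_L^{1/2}w_f^{-2}$, as you computed. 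For the descent to close you then need $\mu_P\mu_L^{3/2}\ll\eta w_f^2$. The hypothesis only gives $\mu_P\mu_L^{2}\ll\eta w_f^2$, and since $\mu_L\le 1$ we have $\mu_L^{3/2}\ge\mu_L^{2}$, so your claimed ``a fortiori'' step $\mu_P\mu_L^{3/2}\le\mu_P\mu_L^{2}$ is backwards. The two conditions differ by a factor $\mu_L^{-1/2}$, which in the intended applications ($\mu_L\sim K\delta^{\sigma}$ polynomially small in $n$) is enormous and cannot be absorbed by the sub-polynomial slack $e^{-|\log w_f|^{0.9}}$ in \eqref{eq:mu_Pmu_Lhypothesis}. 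So the argument as written does not prove the proposition under its stated hypothesis.

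The missing idea is the dyadic pigeonholing on tube occupancy (the paper's Lemma \ref{lem:single_scale_estimate_prop_separated}). At each scale one partitions $L$ into classes $L^j$ according to $|\T_w(\ell)\cap P|\sim 2^{j}w|P|$ and keeps the class $L^*$ carrying most of the incidences. For that class the line concentration is proportional to the incidence count itself, $M_{L^*}(w\times 1)\lesssim w\,\mu_L\,|L|\,B(w;P,L^*)$, so the \emph{relative} error obeys $(\Err/B)^2\lesssim |\log w_f|\,w_f^{-2}\mu_P\mu_L\,B(w;P,L)^{-1}$; since $B\gtrsim\mu_L^{-1}\eta$ this involves only one factor of $\mu_L\eta^{-1}$ and closes exactly under $\mu_P\mu_L^{2}\ll w_f^{2}\eta$. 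This pigeonholing costs a factor $|\log w_f|$ and must be redone at every step of size $K=e^{|\log w_f|^{0.5}}$, which is precisely where the $e^{-|\log w_f|^{0.6}}$ loss in the conclusion comes from; a single application of \eqref{multscale}, as you propose, does not produce this structure.
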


For example, suppose $P$ is a set of $\delta$-separated points and $L_p$ is a set of $\sim \eta^{-1}$ many $\eta$ separated lines through each $p$, with $\eta \ggg \delta$. Then we can take $\mu_P = \max((w_f/\delta)^2, |P|^{-1})$ and $\mu_L = \eta$, and the condition reads 
\begin{align*}
    w_f \ggg \sqrt{\eta / |P|}.  
\end{align*}
The Szemer\'edi-Trotter example from \S\ref{sec:inductive_step_high_low} is of this form, in that case we have $\eta = |P|^{-1/3}$ and $w_f = |P|^{-2/3}$ in agreement with our computation there.

The proof of Proposition \ref{prop:incidence_lower_bound_sep_weighted} involves an initial estimate using Lemma \ref{lem:basic_initial_estimate_weighted} and an inductive step using the high-low method (\ref{eq:high_low_bound}). 

\begin{remark}
We emphasize an important difference between using the high-low method to prove upper bounds vs. lower bounds. An upper bound looks like: $B(w_f) \lesssim B(w_i) + \mathrm{\Err}$. The initial estimate and the inductive step independently contribute to the final bound. A lower bound looks like: $B(w_f) \gtrsim B(w_i) - \mathrm{\Err}$. We need $\mathrm{\Err} \lll B(w_i)$ to get any lower bound at all. For this reason proving lower bounds is more delicate, and the basic initial estimate is absolutely crucial. 
\end{remark}

\begin{remark} \label{shift}
The conclusion of Proposition \ref{prop:incidence_lower_bound_sep_weighted} remains true when the lines from $L_p$ are at distance $< w_f/5$ from $p$ rather than all concurrent at $p$. Indeed, if that's the case, consider the set of lines $L_p'$ obtained by translating each line in $L_p$ to pass through $p$. Then,
\begin{equation*}
    \T_{w_f/3}(\ell') \subset \T_{w_f}(\ell)
\end{equation*}
holds for each $\ell \in L_p$, where $\ell'$ denotes its translation through $p$.
Thus
\begin{equation*}
    B(w_f; P, L) \geq \frac{1}{3}B(w_f/3; P, L') \gtrsim e^{-|\log w_f|^{0.6}}\, \mu^{-1}_L \eta. 
\end{equation*}
\end{remark}

\subsection{Proof of Lemma \ref{lem:basic_initial_estimate_weighted}} 

An important intuition is that there are only $\sim w^{-2}$ different $w\times 1$ tubes. Before making this more precise, recall that for any two tubes $T$ and $T'$ the notation $T' \subset T$ stands for $T' \cap [0,1]^2 \subset T$ (and the same goes for line containment).

Let $w > 0$ be a parameter. There is a family of $w/2$-tubes $\mc T^w = \{T_1, \ldots, T_m\}$ such that 
\begin{enumerate}
    \item $|\mc T^w| \sim w^{-2}$,
    \item Let $T \subset \R^2$ be a $w$-tube, and let $\ell$ be the central line in $T$. There is some $T' \in \mc T^w$ such that 
    \begin{align*}
        \ell \subset T'  \subset T 
    \end{align*}
\end{enumerate}

One can define such a $\mathcal T^w$ as follows. Consider the family of tubes 
$$
\mathcal T_0 = \left\{\Bigl[\frac{j w}{100}, \frac{jw}{100} + \frac{w}{2}\Bigr] \times \R, ~~ \text{for }j =-200w^{-1}, \ldots, 200 w^{-1}\right\}.
$$ 
Now take $\mathcal T^w$ to be a union of rotations of $\mathcal T_0$ around the point $(1/2,1/2)\in [0, 1]^2$ by angles $\theta = jw/100$, for $j=0, \ldots, 200 w^{-1}$. The set $\mathcal{T}^{w}$ satisfies the above conditions. 

We are ready to prove Lemma \ref{lem:basic_initial_estimate_weighted}. Recall that $P \subset [0,1]^2$ is a set of $n$ points and that for each $p \in P$ we have a pencil of lines $L_p$ such that $|L_p|$ is the same for all $p$, and 
\begin{equation}
    \#\{\ell \in L_p\, :\, \theta(\ell) \in I\} \leq \mu |L_p|\quad \text{for any interval $I \subset S^1$ with $|I| = \eta$.}\label{eq:basic_initial_not_concentrated}
\end{equation}
Furthermore, $L = \bigcup_p L_p$. Let $\lambda > 0$ be a parameter to be chosen later. Say a tube $T$ is \textit{bad} if $\frac{|P \cap T|}{\eta |P|} \leq \lambda$. Say $\ell \in L$ is bad if $T_{\eta}(\ell)$ is bad. If a tube or line is not bad we say it is \textit{good}. 

Let $\mc T^{\eta}$ be a collection of $(\eta/2)$-tubes as above, and let $\mc T^{\eta}_{bad}$ be the set of bad tubes in $T^{\eta}$.  Let $\ell \in L$. There is some $T \in \mc T^{\eta}$ such that 
\begin{equation*} \label{cover}
    \ell \subset T \subset \T_{\eta}(\ell),
\end{equation*}
and if $\ell$ is bad then $T$ is bad. Using this idea, we estimate the number of bad lines $\ell \in L$ as follows:
\begin{align*}
    \#\{\ell \in L\, :\, \text{$\ell$ is bad}\} &\leq \sum_{T \in \mc T^{\eta}_{bad}} \sum_{p \in T} \#\{\ell \in L_p\, :\, T \subset \T_{\eta}(\ell) \}.
\end{align*}
Let $\ell \in L$. If $\T_{\eta}(\ell)\supset T$ then $|\theta(\ell) - \theta(T)| \leq 50\eta$. From the hypothesis (\ref{eq:basic_initial_not_concentrated}), we know that
\begin{equation*}
    \#\{\ell \in L_p\, :\, \T_{\eta}(\ell)\supset T\} \leq  100 \mu  |L_p|. 
\end{equation*}
Using this estimate and the fact that bad tubes have $\leq \lambda \eta |P|$ points, we have
\begin{equation*}
    \#\{\ell \in L\, :\, \text{$\ell$ is bad}\} \lesssim |\mc T^{\eta}|\, (\lambda \eta |P|) \, (\mu |L_p|) \lesssim \lambda \mu \eta^{-1}\, |L|. 
\end{equation*}
Thus if we choose $\lambda \leq c\, \mu^{-1} \eta$ for small enough $c$ then at least $\frac{1}{2}|L|$ lines in $L$ are good, as desired. Note that in this step we used the fact that $|L_p|$ is the same for every $p$ to write $|P| |L_p| = |L|$. 

\subsection{Proof of Proposition \ref{prop:incidence_lower_bound_sep_weighted}}

Let $P, L$ be as in Proposition \ref{prop:incidence_lower_bound_sep_weighted}. We will use the basic initial estimate Lemma \ref{lem:basic_initial_estimate_weighted} to obtain an estimate at scale $\eta$, and then we will use the high-low estimate (\ref{eq:high_low_bound}) to get a final estimate at scale $w_f$. First we establish some point and line regularity. 

We can cover any $w\times w$ square with $\lceil w / w_f\rceil^2$ many $w_f\times w_f$ squares, so using the hypothesis (\ref{eq:lower_bd_prop_upper_bd_pts_in_square}), 
\begin{equation}\label{eq:inc_lower_bd_pt_reg}
      M_P(w\times w) \leq 2\left(\frac{w}{w_f}\right)^2\, \mu_P\, |P|. 
\end{equation}

Let $T$ be a $w\times 1$ tube, $w \leq \eta$. Recall that $T\cap L = \{\ell \in L\, :\, \ell \subset T\}$. By the hypothesis (\ref{eq:line_concentration_prop_sep}), 
\begin{equation}\label{eq:line_concentration_sep_tube}
      |T\cap L| \leq \sum_{p \in T} |T\cap L_p| \lesssim |T\cap P|\, \mu_L\, |L_p|.  
\end{equation}
Let us make a few comments about this estimate. Our goal is to prove that there are lots of incidences at some scale $w < \eta$. On the one hand, if $T\cap P$ is large for many tubes $T = \T_w(\ell)$ then there are a lot of incidences and we are in good shape. On the other hand, larger $T\cap P$ leads to a larger error in the high-low estimate. To deal with this trade-off we dyadically pigeonhole based on the number of points in a tube.

The following Lemma is the single scale ingredient to our inductive step. 

\begin{lemma}\label{lem:single_scale_estimate_prop_separated}
Suppose $w \geq w_f$. If 
\begin{equation}\label{eq:single_scale_lem_hyp}
     B(w; P, L) \geq C K^3|\log w_f|\, \mu_P \mu_L w_f^{-2}
\end{equation}
then
\begin{equation*}\label{eq:single_scale_estimate}
      B(w/K; P, L) \geq C^{-1} |\log w_f|^{-1} B(w; P, L). 
\end{equation*}
Here, $C$ is a universal constant. 
\end{lemma}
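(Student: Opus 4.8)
The plan is to deduce Lemma~\ref{lem:single_scale_estimate_prop_separated} directly from the single-scale high-low comparison \eqref{eq:high_low_bound_K}, which gives
\[
    B(w/K;P,L)\ \gtrsim\ B(w;P,L)\ -\ C_0\,K^{3/2}\,\Err(w;P,L)
\]
for a universal constant $C_0$ (the factor $K^{3/2}$ already absorbs the geometric growth of the error across the intermediate scales $w/10^{j}$). So the lemma follows once we show that the hypothesis \eqref{eq:single_scale_lem_hyp} forces $C_0 K^{3/2}\Err(w;P,L)\le \tfrac12 B(w;P,L)$: indeed, then $B(w/K)\ge\tfrac12 B(w)\ge C^{-1}|\log w_f|^{-1}B(w)$, using $|\log w_f|\ge 2$. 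Everything therefore reduces to a good upper bound for $\Err(w;P,L)$, and this is where the two hypotheses \eqref{eq:lower_bd_prop_upper_bd_pts_in_square} and \eqref{eq:line_concentration_prop_sep} of Proposition~\ref{prop:incidence_lower_bound_sep_weighted} enter.

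The point regularity is exactly \eqref{eq:inc_lower_bd_pt_reg}: since $w\ge w_f$, covering a $w\times w$ square by $O((w/w_f)^{2})$ squares of side $w_f$ and applying \eqref{eq:lower_bd_prop_upper_bd_pts_in_square} gives $M_P(w\times w)\lesssim (w/w_f)^{2}\mu_P|P|$. For the line regularity I would combine \eqref{eq:line_concentration_sep_tube}, which already gives $|L\cap T|\lesssim |P\cap T|\,\mu_L|L_p|$ for any $w$-tube $T$ (with $w\le\eta$, the range in which the lemma is applied), with the point bound: covering $T$ by $O(w^{-1})$ squares of side $w$ yields $|P\cap T|\le M_P(w\times 1)\lesssim w\,w_f^{-2}\mu_P|P|$, and hence
\[
    M_L(w\times 1)\ \lesssim\ w\,w_f^{-2}\mu_P\mu_L|L|,
\]
using that $|L_p|$ is the same for every $p$, so $|P||L_p|=|L|$.

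Now I plug these two bounds into the definition \eqref{eq:defn_err_equation} of the error. All powers of $w$ cancel and we are left with the clean estimate $\Err(w;P,L)\lesssim w_f^{-2}\,\mu_P\,\mu_L^{1/2}$, so $C_0 K^{3/2}\Err(w;P,L)\lesssim K^{3/2}\,w_f^{-2}\,\mu_P\,\mu_L^{1/2}$; the threshold in the hypothesis \eqref{eq:single_scale_lem_hyp} is designed precisely so that this quantity is absorbed into $\tfrac12 B(w;P,L)$, which finishes the proof. I expect the only genuinely delicate point to be this final piece of error bookkeeping: one has to be careful that the estimate for $\Err(w)$ is uniform enough to survive the $K^{3/2}$ loss built into \eqref{eq:high_low_bound_K}, and to check that the exact shape of the threshold in \eqref{eq:single_scale_lem_hyp}---in particular the factor $|\log w_f|$, which is the slack needed when this lemma is iterated over $\sim|\log w_f|$ consecutive scales in the proof of Proposition~\ref{prop:incidence_lower_bound_sep_weighted}---really does dominate the error term.
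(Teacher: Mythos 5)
Your reduction to ``show $K^{3/2}\Err(w)\le\tfrac12 B(w)$'' is the right shape, and your bounds $M_P(w\times w)\lesssim (w/w_f)^2\mu_P|P|$ and $M_L(w\times 1)\lesssim w\,w_f^{-2}\mu_P\mu_L|L|$ are individually correct. But the resulting error estimate $\Err(w)\lesssim w_f^{-2}\mu_P\mu_L^{1/2}$ is too weak to be absorbed by the hypothesis \eqref{eq:single_scale_lem_hyp}. Squaring, you need
\[
K^{3}\,w_f^{-4}\mu_P^{2}\mu_L \;\lesssim\; B(w)^{2},
\]
while the hypothesis only gives $B(w)^{2}\geq C^{2}K^{6}|\log w_f|^{2}\mu_P^{2}\mu_L^{2}w_f^{-4}$. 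Comparing the two, your argument closes only if $\mu_L\gtrsim K^{-3}|\log w_f|^{-2}$. That is an extra assumption not in the lemma, and it fails in the intended applications: there $\mu_L\sim K\delta^{\sigma}$ is polynomially small in $n$, whereas $K^{-3}|\log w_f|^{-2}=e^{-3|\log w_f|^{1/2}}|\log w_f|^{-2}$ is only subpolynomially small. The root cause is that you bounded $|P\cap T|$ for \emph{every} $w$-tube by the worst case $M_P(w\times 1)\lesssim w\,w_f^{-2}\mu_P|P|$, which costs you a second factor of $\mu_P w_f^{-2}$ in $\Err(w)^2$ and hence two powers of $B(w)$ in the ratio $(\Err/B)^2$, where the hypothesis can only pay for one.

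The missing idea is a dyadic pigeonholing on tube occupancy. The paper partitions $L$ into classes $L^j=\{\ell: |\T_w(\ell)\cap P|\sim 2^j w|P|\}$, selects the class $L^*$ maximizing $I(w;P,L^j)$ (there are only $O(|\log w_f|)$ relevant classes, whence the $|\log w_f|^{-1}$ loss in the conclusion), and observes that for this class the line concentration is controlled by the incidence count itself: $M_{L^*}(w\times 1)\lesssim w\mu_L|L|\,B(w;P,L^*)$, because a $w$-tube containing a line of $L^*$ contains $\lesssim 2^{j^*}w|P|$ points and $2^{j^*}\lesssim B(w;P,L^*)$. This self-improving bound yields $(\Err(w;P,L^*)/B(w;P,L^*))^2\lesssim |\log w_f|\,w_f^{-2}\mu_P\mu_L\,B(w;P,L)^{-1}$ -- only one inverse power of $B$ -- which is exactly what \eqref{eq:single_scale_lem_hyp} dominates. (Relatedly, the $|\log w_f|$ factors in the hypothesis and conclusion come from this pigeonholing, not, as you suggest, from slack reserved for the iteration in Lemma \ref{lem:inductive_step_estimate_prop_separated}; and the fact that your route would prove the stronger conclusion $B(w/K)\geq\tfrac12 B(w)$ with no logarithmic loss is itself a warning sign.) One also has to convert the conclusion for $L^*$ back to $L$ via $B(w/K;P,L)\geq B(w/K;P,L^*)\,|L^*|/|L|$, a step absent from your sketch.
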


Using this Lemma along with the basic initial estimate, we inductively prove the following. 

\begin{lemma}\label{lem:inductive_step_estimate_prop_separated}
Let $K = e^{|\log w_f|^{0.5}}$. For any $j \geq 0$ so that $\eta / K^j \geq w_f/K$, we have
\begin{equation}\label{eq:several_scale_estimate}
      B(\eta / K^j ; P, L) \geq (C |\log w_f|)^{-j-1}\, \mu_L^{-1} \eta. 
\end{equation}
\end{lemma}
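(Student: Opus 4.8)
The plan is to prove Lemma~\ref{lem:inductive_step_estimate_prop_separated} by induction on $j$, using Lemma~\ref{lem:basic_initial_estimate_weighted} for the base case and Lemma~\ref{lem:single_scale_estimate_prop_separated} for the inductive step. For the base case $j=0$, we apply Lemma~\ref{lem:basic_initial_estimate_weighted} with $\mu = \mu_L$: this gives that for at least half the lines $\ell \in L$ we have $|P \cap \T_\eta(\ell)| / (\eta |P|) \gtrsim \mu_L^{-1}\eta$, and summing over these lines yields $B(\eta; P, L) \gtrsim \mu_L^{-1}\eta$, which (after adjusting constants) is exactly $(C|\log w_f|)^{-1}\mu_L^{-1}\eta$ as required. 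Here we use that $|L_p|$ is constant in $p$, so $|L| = |P|\,|L_p|$ and the count of good lines translates cleanly into a lower bound on the normalized incidence count $B(\eta)$.

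For the inductive step, suppose \eqref{eq:several_scale_estimate} holds at scale $w := \eta/K^j$ and that $\eta/K^{j+1} \ge w_f/K$, i.e. $w/K \ge w_f/K$, so $w \ge w_f$ and Lemma~\ref{lem:single_scale_estimate_prop_separated} is applicable. The key point is to verify its hypothesis \eqref{eq:single_scale_lem_hyp}, namely that $B(w; P, L) \ge CK^3 |\log w_f|\, \mu_P \mu_L w_f^{-2}$. By the induction hypothesis $B(w) \ge (C|\log w_f|)^{-j-1}\mu_L^{-1}\eta$, so it suffices to check
\begin{equation*}
    (C|\log w_f|)^{-j-1}\mu_L^{-1}\eta \ge CK^3|\log w_f|\,\mu_P\mu_L w_f^{-2}.
\end{equation*}
Rearranging, this amounts to $\mu_P \mu_L^2 \le C^{-1}K^{-3}(C|\log w_f|)^{-j-2}\, w_f^2\, \eta$. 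Since $K = e^{|\log w_f|^{0.5}}$ and the number of steps $j$ satisfies $K^j \le \eta/w_f \le w_f^{-1}$, we get $j \le |\log w_f| / |\log w_f|^{0.5} = |\log w_f|^{0.5}$, so $(C|\log w_f|)^{j+2} \le e^{C|\log w_f|^{0.5}\log|\log w_f|} \le e^{|\log w_f|^{0.6}}$ for $w_f$ small, and $K^3 = e^{3|\log w_f|^{0.5}}$ is similarly absorbed into $e^{|\log w_f|^{0.9}}$. Thus the required inequality follows from the hypothesis \eqref{eq:mu_Pmu_Lhypothesis}, namely $\mu_P\mu_L^2 \le c_0 e^{-|\log w_f|^{0.9}} w_f^2 \eta$, for a suitable choice of the constant $c_0$. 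With \eqref{eq:single_scale_lem_hyp} verified, Lemma~\ref{lem:single_scale_estimate_prop_separated} gives $B(w/K) \ge C^{-1}|\log w_f|^{-1} B(w) \ge (C|\log w_f|)^{-j-2}\mu_L^{-1}\eta$, which is \eqref{eq:several_scale_estimate} at level $j+1$.

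The main obstacle here is the bookkeeping on the accumulating $(C|\log w_f|)^{-1}$ factors: each application of the inductive step costs a factor $C|\log w_f|$, and we must ensure that after all $j \lesssim |\log w_f|^{0.5}$ steps the product of these losses stays within the budget allowed by the sub-exponential slack $e^{-|\log w_f|^{0.9}}$ in hypothesis \eqref{eq:mu_Pmu_Lhypothesis}. This is exactly why the exponents $0.9$, $0.6$, $0.5$ are chosen as they are: the $0.5$ in $K = e^{|\log w_f|^{0.5}}$ controls the number of scales, the product of per-step losses is then $e^{O(|\log w_f|^{0.5}\log|\log w_f|)} \ll e^{|\log w_f|^{0.6}}$, and this fits comfortably inside $e^{|\log w_f|^{0.9}}$ with room to spare for the $K^3$ factor in \eqref{eq:single_scale_lem_hyp}. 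Once the induction closes, Proposition~\ref{prop:incidence_lower_bound_sep_weighted} follows by taking $j$ maximal, so that $\eta/K^j$ is within a factor $K$ of $w_f$; then $B(w_f) \gtrsim K^{-1}B(\eta/K^j) \gtrsim K^{-1}(C|\log w_f|)^{-j-1}\mu_L^{-1}\eta \ge e^{-|\log w_f|^{0.6}}\mu_L^{-1}\eta$, absorbing all the losses into the single clean factor $e^{-|\log w_f|^{0.6}}$.
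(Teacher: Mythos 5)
Your proof is correct and follows essentially the same route as the paper: induction on $j$ with the base case supplied by Lemma \ref{lem:basic_initial_estimate_weighted}, and the inductive step supplied by Lemma \ref{lem:single_scale_estimate_prop_separated}, whose hypothesis \eqref{eq:single_scale_lem_hyp} is verified by combining the induction hypothesis with \eqref{eq:mu_Pmu_Lhypothesis} and the bound $j \lesssim |\log w_f|^{0.5}$ so that the accumulated $(C|\log w_f|)^{j+2}K^3$ losses are absorbed by $e^{|\log w_f|^{0.9}}$. The bookkeeping matches the paper's computation exactly, so no further changes are needed.
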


Notice that (\ref{eq:several_scale_estimate}) shows 
$$B(w_f; P, L) \geq (C |\log w_f|)^{-2\frac{\log (\eta / w_f)}{\log K}}\, \mu_L^{-1} \eta \geq (C |\log w_f|)^{-|\log w_f|^{0.5}}\, \mu_L^{-1}\eta \gtrsim e^{-|\log w_f|^{0.6}}\, \mu_L^{-1}\eta,$$
as needed.
Now we prove the lemmas. 

\begin{proof}[Proof of Lemma \ref{lem:single_scale_estimate_prop_separated}]

For a scale $w > w_f$ and $j \in \Z$, let 
\begin{equation*}
      L^j = \Bigl\{\ell \in L\, :\, 2^{j-1} \leq \frac{|\T_w(\ell)\cap P|}{w\, |P|} \leq 2^j\Bigr\}.
\end{equation*}
Let $L^* = L^j$ with $j$ chosen to maximize $I(w; P, L^{j})$. Notice that for any $j \in \Z$, 
\begin{equation*}
    I(w; P, L^j) \leq 2^{j} w |P| |L|.
\end{equation*}
Let $j_0 < 0$ be the largest integer such that $2^{j_0} \leq \frac{1}{1000}B(w; P, L)$. Then using the hypothesis (\ref{eq:single_scale_lem_hyp}), 
\begin{equation*}
    j_0 \gtrsim \log B(w; P, L) \gtrsim \log \eta + \log \mu_P \gtrsim -|\log w_f|. 
\end{equation*} 
Also, because $|\T_w(\ell)\cap P| \leq |P|$, $L^j$ is empty for $j \gtrsim |\log w_f|$. Thus 
\begin{equation*}
    \sum_{-C |\log w_f| \leq j \leq C |\log w_f|} I(w; P, L^j) \geq \frac{1}{2} I(w; P, L),
\end{equation*}
so by the pigeonhole principle,
\begin{align}
    I(w; P, L^*) &\gtrsim |\log w_f|^{-1}\, I(w; P, L),\nonumber\\
    B(w; P, L^*) &\gtrsim  |\log w_f|^{-1} B(w; P, L) \frac{|L|}{|L^*|}.\label{eq:B_est_L*}
\end{align}
By equation (\ref{eq:line_concentration_sep_tube}),
\begin{align}
      M_{L^{j}}(w \times 1) &\lesssim (2^j w|P|)\, (\mu_L |L_p|) = 2^j w \mu_L\, |L|, \nonumber\\ 
      M_{L^*}(w\times 1)&\lesssim w\mu_L\, |L|\, B(w; P, L^*).\label{eq:line_conc_L*}
\end{align}
Now we estimate the error term (\ref{eq:defn_err_equation}) of the high-low method,
\begin{align*}
      \left(\frac{\Err(w; P, L^*)}{B(w; P, L^*)}\right)^2 &= \frac{M_P(w\times w)}{|P|} \frac{M_{L^*}(w\times 1)}{|L|} w^{-3} B(w; P, L^*)^{-2} \\ 
      &\lesssim w_f^{-2}w^2\, \mu_P\, \mu_L w \frac{|L|}{|L^*|}\, \, w^{-3}\, B(w; P, L^*)^{-1} && \text{by (\ref{eq:inc_lower_bd_pt_reg}) and (\ref{eq:line_conc_L*})}\\ 
      &\lesssim w_f^{-2}\, \mu_P\, \mu_L\, \left(\frac{|L^*|}{|L|}\, B(w; P, L^*)\right)^{-1} \\ 
      &\lesssim  |\log w_f|\, w_f^{-2}\, \mu_P\, \mu_L\, B(w; P, L)^{-1} && \text{by (\ref{eq:B_est_L*})}\\ 
      &\leq C^{-1}K^{-3} && \text{by (\ref{eq:single_scale_lem_hyp})}
\end{align*}
By the high-low bound (\ref{eq:high_low_bound_K}), we obtain
\begin{equation*}
      B(w/K; P, L^*) \geq \frac{1}{2} B(w; P, L^*).
\end{equation*}
We can obtain a bound with $L^*$ replaced with $L$ as follows:
$$B(w/K; P, L) \geq B(w/K; P, L^*) \frac{|L^*|}{|L|} \gtrsim B(w; P, L^*) \frac{|L^*|}{|L|} \gtrsim |\log w_f|^{-1} B(w; P, L).$$
\end{proof}

\begin{proof}[Proof of Lemma \ref{lem:inductive_step_estimate_prop_separated}]
First, by Lemma \ref{lem:basic_initial_estimate_weighted}, 
\begin{equation*}
      B(\eta; P, L) \gtrsim \mu_L^{-1} \eta. 
\end{equation*}
This proves (\ref{eq:several_scale_estimate}) for $j = 0$. 
By the hypothesis (\ref{eq:mu_Pmu_Lhypothesis}), 
\begin{equation*}
    \mu_L^{-1}\eta \geq \mu_P \mu_L w_f^{-2}\, c_0^{-1} e^{|\log w_f|^{0.5}}.
\end{equation*}
If $\eta / K^j \geq w_f / K$ then $j \leq \lceil \frac{|\log w_f|}{\log K}\rceil \lesssim |\log w_f|^{0.5}$. Then (\ref{eq:several_scale_estimate}) implies 
\begin{align*}
    B(\eta / K^j; P, L) &\geq (C|\log w_f|)^{-j-1}\, \mu_P \mu_L w_f^{-2}\, c_0^{-1} e^{|\log w_f|^{0.9}} \\ 
    &\geq c_0^{-1}\, e^{|\log w_f|^{0.9} - |\log w_f|^{0.5}\log \log w_f^{-1} - C}\, \mu_P\, \mu_L\, w_f^{-2}\\
    &\geq CK^3\, |\log w_f|\, \mu_P\mu_L w_f^{-2}
\end{align*}
as long as the small constant $c_0$ in the statement of Proposition \ref{prop:incidence_lower_bound_sep_weighted} is small enough. Apply Lemma \ref{lem:single_scale_estimate_prop_separated} to finish the proof.
\end{proof}

\section{Improved bounds for the Heilbronn triangle problem}\label{sec:improved_bd_proof}

\subsection{From incidences to small triangles}

The following lemma will be used to deduce the improved bound for Heilbronn's triangle problem. Given a lower bound on the number of incidences between the set of points $P$ and a set of lines spanned by pairs of points of $P$ at certain distance, this result asserts that $P$ contains triangles of small area. 

We need a definition that will be used in the proof. Say two $w$-tubes $T, T'$ are \textit{essentially distinct} if $T' \not\subset 2T$ where $2T$ is the central dilate (recall that we intersect the tubes with $[0,1]^2$ before evaluating the subset relation). Let $w_2 < w_1$ and let $T$ be a fixed $w_1$ tube. If $\mc T = \{T_1, \ldots, T_m\}$ is a collection of essentially distinct $w_2$ tubes with $T_j \subset T$, then $|\mc T| \lesssim (w_1/w_2)^2$. 

Recall that for $w > 0$ we use the notation $B(w) = \frac{I(w; P, L)}{w |P| |L|}$ for the normalized number of incidences between the set of points $P$ and the set of lines $L$ at scale $w$. 

\begin{lemma}\label{lem:low_scales}
Let $P \subset [0, 1]^2$ be a set of points and let $\kappa > 0$ be a parameter. Suppose that for some $u$ and $w$ there exists a collection of lines $L$ spanned by pairs of points of $P$ at distance at most $u$ such that $B(w; P, L) \ge \kappa$, $I(w; P, L) > C |L|$, and $|L| > C |P|$.
Then $P$ contains a triangle of area $\Delta$ where
\begin{equation}\label{deltaeq}
\Delta/|\log \Delta| \lesssim \max\{u, u^{2/3}w^{1/3}\} |P|^{-1/3} |L|^{-1/3}\kappa^{-2/3} +  u |P|^{-1} \kappa^{-1}.    
\end{equation}
\end{lemma}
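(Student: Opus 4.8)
The plan is to turn the hypothesis $B(w;P,L)\geq\kappa$ into a statement about a single rich tube, and then use the geometry of two points spanning a line together with a third nearby point to produce a small-area triangle. First I would unpack $B(w;P,L)\geq\kappa$: by definition this means $I(w;P,L)\geq\kappa\,w\,|P|\,|L|$, so on average a line $\ell\in L$ has $|\T_w(\ell)\cap P|\gtrsim\kappa\,w\,|P|$ incidences (after accounting for the smoothing, which only costs constants). I would dyadically pigeonhole over the number of points in $\T_w(\ell)$ and over the number of points in $P$ inside the $u\times u$ square generating $\ell$ (recall $L=\bigcup_Q L_Q$ with $\ell_\tau$ spanned by a pair $\tau\subset Q$), so as to extract a sub-collection $L'\subset L$ and a scale where the parameters are comparable, while keeping $|L'|\gtrsim|L|/\mathrm{polylog}$ — this is where the $|\log\Delta|$ loss in \eqref{deltaeq} comes from.

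Next, the core geometric step. Fix one line $\ell=\ell_{xy}$ with $x,y\in P$, $d(x,y)=d\leq u$, such that $\T_w(\ell)$ contains $m\gtrsim\kappa w|P|$ points of $P$; now pick a third point $z\in\T_w(\ell)\cap P$, $z\neq x,y$. The triangle $xyz$ has area $\tfrac12 d(x,y)\cdot\dist(z,\ell)\leq\tfrac12 d\cdot w\lesssim uw$, which already gives the crude bound $\Delta\lesssim uw$; but this is far from \eqref{deltaeq}, so the point must be to choose $z$ much closer to $\ell$ than $w$. The mechanism: among the $m$ points in $\T_w(\ell)\cap P$, if they were all at distance $\gtrsim t$ from $\ell$ then they fit into an annular strip of the tube of area $\sim w$ minus a sub-strip of area $\sim t$ around $\ell$; alternatively, and more usefully, I would consider all the lines through $x$ (or through the points of $Q$) and use a second pigeonholing on directions/tubes at a finer scale to force two points of $P$ to lie in a very thin tube. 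In other words I want to iterate the tube-covering argument: the $m$ points in $\T_w(\ell)$ determine $\sim m^2$ secondary lines, and since there are only $\sim(w/w')^2$ essentially distinct $w'$-subtubes of $\T_w(\ell)$ (the ``essentially distinct'' counting bound stated just before the lemma), if $m^2\gg(w/w')^2$ then some $w'$-subtube contains two points of $P$ whose spanning segment has length $\lesssim w$ but... — actually the cleaner route is: within $\T_w(\ell)\cap P$ look for three points forming a small triangle directly, applying a nested version of the same incidence/pigeonhole reasoning to the $\sim w\times 1$ slab, which behaves like a rescaled copy of the original problem with $|P|$ replaced by $m$ and the square by a $w$-thin rectangle.

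Assembling: I expect the final bound to come from optimizing between (i) the ``global'' contribution, where the $|L|$ lines with their incidences give many points squeezed into thin tubes — forcing $\dist(z,\ell)\lesssim$ (area of tube)/(number of points) $\sim w\cdot(\kappa w|P|)^{-1}$ or a root thereof once one also uses $|L|$ — yielding a term like $u^{2/3}w^{1/3}|P|^{-1/3}|L|^{-1/3}\kappa^{-2/3}$ after balancing $d\leq u$ against the distance gain and taking the geometric mean dictated by a two-point-plus-one-point count; and (ii) a ``degenerate'' contribution $u|P|^{-1}\kappa^{-1}$ coming from the case where everything collapses into essentially one direction, handled by the crude $\Delta\lesssim d\cdot\dist$ with $\dist\lesssim u/(\kappa|P|)$. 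The two regimes correspond exactly to the two terms of the maximum in \eqref{deltaeq}, with the $\max\{u,u^{2/3}w^{1/3}\}$ reflecting whether $w\geq u$ or $w<u$. The main obstacle, I expect, is the first term: carefully setting up the nested pigeonhole so that the exponents $1/3,1/3,1/3,2/3$ emerge — i.e. correctly counting that one needs roughly $|P||L|\kappa^2$ ``point-line coincidences at scale $w$'' to be packed into $\sim w^{-2}$ tubes and $\sim u^{-2}$ squares and then converting a tube of ``width $w$, $|P|\kappa w$ points'' into a pair of points at distance $\lesssim w$ but separated from a line by $\lesssim w^2/(\text{local point count})$ — and making sure the line $\ell$ actually passes near a point of $P$ at the relevant distance, using $I(w;P,L)>C|L|$ and $|L|>C|P|$ precisely to guarantee the dyadic classes are non-trivial and that a typical line is genuinely incident to $\gtrsim 1$ point beyond its two defining points.
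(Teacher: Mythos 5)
There is a genuine gap: your proposal never invokes the mechanism that actually drives the paper's proof, namely the multi-scale high-low comparison. The paper argues by contradiction. Since each $\ell\in L$ is spanned by two points of $P$ at distance $\le u$, the strip $\T_{\ell}(4\Delta/u)$ contains \emph{exactly two} points of $P$ (else a triangle of area $\le\Delta$ appears), so with $w_f=\Delta/u$ one gets the \emph{upper} bound $B(w_f)\lesssim u/(\Delta|P|)$. The paper then shows $M_P(v\times v)\lesssim\max\{1,v^2\Delta^{-1}\}$ and $M_L(v\times 1)\lesssim (v/w_f)^2$, checks that $\Err(v)<\kappa/(C|\log\Delta|)$ at the endpoints $v=w_f$ and $v=w$ (using $|L|>C|P|$ to get $u^2\ge\Delta$ and $I(w;P,L)>C|L|$ to get $\Delta\le uw$), and uses convexity plus the summed high-low inequality \eqref{multscale} to propagate $B(w)\ge\kappa$ down to $B(w_f)\gtrsim\kappa$, contradicting the emptiness of the narrow strips. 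The two terms of \eqref{deltaeq} are exactly the conditions needed for this contradiction, and the $|\log\Delta|$ comes from summing errors over $\log(w/w_f)$ scales, not from a dyadic pigeonhole.

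Your direct route cannot close this gap. Knowing that $\T_w(\ell)$ contains $m\gtrsim\kappa w|P|$ points does not let you find a point at distance $\ll w$ from $\ell$: those $m$ points are spread along the unit length of the tube, so no single-scale pigeonhole within the tube improves on the crude $\Delta\lesssim uw$. Your fallback — treating $\T_w(\ell)\cap P$ as ``a rescaled copy of the original problem'' and applying ``a nested version of the same reasoning'' — is circular: it appeals to the statement being proved without setting up an induction with a decreasing parameter or a base case, and it discards the global information $B(w;P,L)\ge\kappa$ across \emph{all} lines of $L$, which is what the orthogonality in the high-low estimate exploits. To repair the argument you would need to (i) identify the final scale $w_f=\Delta/u$ at which the strips are provably almost empty, and (ii) use Theorem \ref{thm:high_low_bound} (or an equivalent $L^2$/orthogonality inequality) to transport the incidence lower bound from scale $w$ down to $w_f$ with controlled error; neither step is present in the proposal.
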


\begin{proof}
For the purposes of this section, let $\Delta$ denote the smallest area determined by three points from $P$. Suppose that $\Delta$ does not satisfy (\ref{deltaeq}) for a sufficiently large implied constant $C$.

First, recall that since the lines in $L$ are spanned by pairs of points of $P$ at distance at most $u$, for any line $\ell \in L$ the strip of width $4\Delta / u$ around $\ell$ contains exactly $2$ points of $P$. Otherwise, taking the pair of points $x, y \in P$ generating $\ell$ and a third point in the strip $\T_\ell(4\Delta/u)$ would create a triangle of area at most $\Delta$.
In particular, if we denote $w_f = \Delta / u$, then we have
\begin{equation}\label{eq:B_wf_upb_lem_low_scales}
B(w_f) = \frac{I(w_f; P, L)}{w_f |P| |L|} \le \frac{2 |L|}{w_f |P| |L|} \lesssim \frac{u}{\Delta |P|} < \kappa/C,
\end{equation}
where we used the assumption $\Delta > C u |P|^{-1}\kappa^{-1}$. 

Next, recall that $I(w_f; P, L)$ denotes the smoothed out number of incidences at scale $w_f$ (see (\ref{eq:defn_smoothed_incidences_formula})) so we take $w_f$ to be a constant factor smaller than $4\Delta/u$ to account for that.

We use the high-low method to lower bound $B(w_f)$ and get a contradiction. Recall that the error term \eqref{eq:defn_err_equation} in the high-low method depends on point and line regularity. 
Note that $P$ contains no three points with pairwise distances less than $\Delta^{1/2}$, since they would otherwise create a triangle of area at most $\Delta$. By a standard packing argument, it follows that $M_P(v \times v) \lesssim \max\{1, v^2 \Delta^{-1} \}$. Furthermore, note that for any distinct lines $\ell, \ell' \in L$ the tubes $\T_\ell(w_f/10)$ and $\T_{\ell'}(w_f/10)$ are essentially distinct. Otherwise the strip $\T_\ell(w_f)$ would have to contain the points of $P$ spanning $\ell'$ --and since $\ell \neq \ell'$, this means that $\T_\ell(w_f)$ would have to contain at least $3$ points of $P$ (which then generates a triangle of area less than $\Delta$, a contradiction).\footnote{This observation goes back to Schmidt, he used it to prove the estimate \eqref{bounded_deg} from Section \ref{sec:Incidences}.}. Because a $v$-tube contains $\lesssim (v/w_f)^2$ essentially distinct $w_f/10$ tubes, we get $M_L(v\times 1) \lesssim (v / w_f)^2$. 

We use our bounds on $M_P$, $M_L$ to estimate the high-low error:
\begin{equation}\label{erroreq}
\Err(v) \lesssim \sqrt{\frac{\max\{1, v^2 \Delta^{-1}\}}{|P|} \frac{(v/w_f)^2}{|L|} v^{-3}}.    
\end{equation}
Note that the right hand side of (\ref{erroreq}) is a convex function on the logarithmic scale and so it is maximized at one of the end points $v = w_f$ or $v = w$.

By the assumption that $|L| > C |P|$, there exist lines $\ell, \ell' \in L$ which intersect in a point of $P$. The points spanning these lines create a triangle of area at most $u^2$. So we have $u^2 \ge \Delta$ and since $w_f u = \Delta$ we get $w_f^2 \le \Delta$. We conclude
$$
\Err(w_f) \lesssim \sqrt{\frac{1}{|P| |L|} u^3 \Delta^{-3} } < \frac{\kappa}{C|\log \Delta|},
$$
where we used the assumption that $\Delta$ does not satisfy (\ref{deltaeq}). 

By the assumption that $I(w; P, L) >C |L|$, an average strip $\T_\ell(w)$ contains at least $3$ points of $P$. Thus, $P$ contains triangles of area at most $u w$ and we conclude that $\Delta \le u w$. Using this, we can bound 
$$
\max\{1, w^2 \Delta^{-1}\} \le \max\{uw \Delta^{-1}, w^2 \Delta^{-1}\} \le \max\{u, w\} w \Delta^{-1}
$$
and get 
$$
\Err(w) \lesssim  \sqrt{\frac{\max\{u, w\} w \Delta^{-1}}{|P|} \frac{(w/w_f)^2}{|L|} w^{-3}} = \sqrt{\frac{1}{|P||L|} \max\{u, w\} u^2 \Delta^{-3}} <  \frac{\kappa}{C|\log \Delta|},
$$
where in the last inequality we used the assumption on $\Delta$. By convexity of the right hand side of (\ref{erroreq}), we conclude that $\Err(v) <  \frac{\kappa}{C |\log \Delta|}$ for all $v \in [w_f, w]$. So using (\ref{multscale}), we conclude that $B(w_f) \gtrsim B(w) - \kappa/10 = 9\kappa/10$ which contradicts the lower bound (\ref{eq:B_wf_upb_lem_low_scales}).
Thus, our initial assumption on $\Delta$ is false and $P$ must contain a triangle of area less than $\Delta$ given by (\ref{deltaeq}).    
\end{proof}

\subsection{$s$-regular subsets relative to a rectangle}\label{Jsection}

Recall that a finite set of points $P \subset [0, 1]^2$ is $s$-regular above scale $\delta$ if for any square $Q \subset [0, 1]^2$ of width $w \ge \delta$ we have $|P \cap Q| \le C w^s |P|$. 
It is convenient to generalize this definition to sets contained in an arbitrary rectangle $R \subset \R^2$ instead of the square $[0, 1]^2$. 
Namely, for a rectangle $R$ let $P \subset R$ be a finite set and let $\psi:[0, 1]^2 \rightarrow R$ be an affine isomorphism between the unit square and the rectangle $R$. We say that $P \subset R$ is {\it{$s$-regular above scale $\delta$ relative to $R$}} if $\psi^{-1}(P) \subset [0, 1]^2$ is $s$-regular above scale $\delta$ in the usual sense. 

Let $R$ be a rectangle and $\psi:[0, 1]^2\rightarrow R$ an affine isomorphism between the unit square and $R$. Let $T \subset \R^2$ be a strip. We say that $T$ has {\it{width $w$ relative to $R$}} if the strip $\psi^{-1}(T)$ has (the usual) width equal to $w$.

Using these notions we will be able to apply the discretized projection theorems, Theorem \ref{marstrand} and Theorem \ref{thm:finite_set_dir_set}. For example, given a finite set $P \subset [0, 1]^2$ and a number $0< s < 2$, we can construct an $s$-regular subset in $P$ as follows. Among all squares $Q \subset [0, 1]^2$ with side $w \ge w_0$ for some fixed $w_0$, choose one maximizing the expression 
\begin{equation}\label{maxq}
|P \cap Q| w^{-s}.    
\end{equation}
Let $P_Q = P \cap Q$ and observe that by definition, the set $P_Q \subset Q$ is $s$-regular relative to $Q$ above scale $w_0/w$. Indeed, if $Q' \subset Q$ is a square with side $w' \in [w_0, w]$ then by the maximality of $Q$ we have
$$
|P\cap Q'| (w')^{-s} \le |P \cap Q| w^{-s},
$$
i.e. $|P_Q \cap Q'| \le (w'/w)^s |P_Q|$. In the coordinate system of $Q$ the square $Q'$ has side $w'/w$ so this condition means precisely that $P_Q \subset Q$ is $s$-regular above scale $w_0/w$.

In the actual proof of Theorem \ref{thm:main_theorem} will use a slightly more complicated way to construct $s$-regular sets. Namely, by restricting more carefully the set of squares over which we maximize (\ref{maxq}), we can get more information on $w$ and $|P_Q|$. 

Furthermore, in order to apply Theorem \ref{thm:finite_set_dir_set} to an $s$-regular set $P$ we also need to check that any strip of width $\tau = \tau(\sigma, s)$ does not contain too many points of $P$. However, this property does not follow from the construction above and we need the following modification. Given a finite set of points $P$ and $0< s< 1$ we consider all rectangles $R$ with sides $a \le b$ such that $a b \ge A$ and choose a rectangle $R$ maximizing the expression
$$
|P \cap R| a^{-\varepsilon} b^{-s+\varepsilon}. 
$$
Here $A$ and $\varepsilon$ are parameters of the construction. It can be easily checked that the resulting set $P_R = P\cap R$ is $s$-regular above scale $\sqrt{A/ab}$ relative to $R$ and satisfies the strip property needed for Theorem \ref{thm:finite_set_dir_set}: any strip $T$ of width $w$ relative to $R$ satisfies $|T \cap P_R| \lesssim w^\varepsilon |P_R|$. 

\subsection{Koml\'os--Pintz--Szemer\'edi bound in new language} \label{KPSsection}

In this section we show how our techniques recover the $8/7$ exponent of Koml\'os--Pintz--Szemer\'edi \cite{KPS81}.

Fix $\varepsilon'' >0$ and for large enough $n$, let $P \subset [0, 1]^2$ be an $n$-element set of points containing no triangles of area less than $\Delta$. Our aim is to show that $\Delta \lesssim n^{-8/7+\varepsilon''}$. There are four steps.
\begin{enumerate}
    \item Partition $P$ into a collection of small Frostman regular subsets. This is achieved with Lemma \ref{lem:single_step_prep_87} and Lemma \ref{lem:several_step_prep_87}. 
    \item Apply Theorem \ref{marstrand} to make a line set $L$ from pairs in these subsets that is well spread out at an initial scale $\eta$. By the basic initial estimate (Lemma \ref{lem:basic_initial_estimate_weighted}) there are lots of incidences at scale $\eta$. 
    \item Apply Proposition \ref{prop:incidence_lower_bound_sep_weighted} to see that there are lots of incidences at a scale $w \ll \eta$. 
    \item Apply Lemma \ref{lem:low_scales} to find small triangles. 
\end{enumerate}

The initial estimate is 1. and 2., and the inductive step is 3. and 4. Before starting the proof, let
\begin{itemize}
    \item $\varepsilon'' \ggg \varepsilon' \ggg \varepsilon > 0$ be a rapidly decreasing sequence of small constants, say, $\varepsilon = \varepsilon''/100$ and $\varepsilon' = \varepsilon''/10$,
    \item $u_0 = \Delta^{-1/2} n^{-1+\varepsilon'}$ be the distance scale for the initial estimate, 
    \item $s = 1+\varepsilon$ be a dimension parameter (for upcoming applications of Theorem \ref{marstrand}).
\end{itemize}

\begin{lemma}[Single step preparation]\label{lem:single_step_prep_87}
Let $P$, $u_0$, and $s$ be as above. For some $u \in [\Delta^{1/2}, u_0]$ there exists a $u \times u$ square $Q \subset [0,1]^2$ such that 
\begin{itemize}
    \item $P_{Q}:= P \cap Q$ is $s$-regular relative to $Q$ at all scales with constant $2$,
    \item We have $v^{-s}|P\cap Q'| \lesssim u^{-s} |P_Q| $ for any square $Q'$ with side $v \in [u, C u_0]$. Here we can take $C=100$. 
    In particular, $u^{-s} |P_Q| \gtrsim u_0^{2-s} |P|$. 
\end{itemize}
\end{lemma}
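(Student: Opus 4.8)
The plan is to run the ``extremal square'' construction sketched in \S\ref{Jsection}, but restricted to a bounded window of scales so that the optimal side length automatically lies in $[\Delta^{1/2},u_0]$. First I would record that this window is nonempty for large $n$: we may assume $\Delta>n^{-8/7+\varepsilon''}$ (otherwise the bound of \S\ref{KPSsection} already holds), and together with the trivial estimate $\Delta\lesssim n^{-1}$ this gives $\Delta^{1/2}\le u_0\le\tfrac12$. I would then take $Q$ to be an axis-parallel square contained in $[0,1]^2$, of side length $v$ ranging over $[\Delta^{1/2},u_0]$, that maximises the Frostman ratio $|P\cap Q|\,v^{-s}$ (a maximum is attained since $P$ is finite), and write $u\in[\Delta^{1/2},u_0]$ for its side and $P_Q=P\cap Q$. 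Everything else is bookkeeping off this single extremal choice.

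For the ``in particular'' lower bound I would cover $[0,1]^2$ by $O(u_0^{-2})$ squares of side $u_0$, each contained in $[0,1]^2$, and pigeonhole to find one, $Q_0$, with $|P\cap Q_0|\gtrsim|P|u_0^2$; since $Q_0$ lies in the maximising family, $u^{-s}|P_Q|\ge u_0^{-s}|P\cap Q_0|\gtrsim|P|u_0^{2-s}$, which in particular forces $P_Q\neq\emptyset$. The second bullet for $v\in[u,u_0]$ is immediate from maximality, since such a $Q'$ (intersected with $[0,1]^2$) is itself in the family; for $v\in[u_0,100u_0]$ I would cover $Q'\cap[0,1]^2$ by $O(1)$ squares of side $u_0$ inside $[0,1]^2$, apply maximality to each, and use $v\ge u_0$ to absorb the extra factor $(u_0/v)^s\le1$, which is exactly why this bullet is stated with $\lesssim$ rather than an exact inequality.

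For the relative regularity of $P_Q$ (first bullet) I would rescale $Q$ to the unit square by an affine map: a sub-square $Q''\subset Q$ of original side $v$ becomes a $(v/u)$-square, and I must bound $|P_Q\cap Q''|$ by $2\max\{1,|P_Q|(v/u)^s\}$. If $v\ge\Delta^{1/2}$, then $Q''\subseteq Q\subseteq[0,1]^2$ has side $\le u\le u_0$, so it lies in the maximising family and maximality yields $|P_Q\cap Q''|\,v^{-s}\le|P_Q|\,u^{-s}$, i.e.\ the bound with constant $1$. If $v<\Delta^{1/2}$, then any three points of $P$ inside $Q''$ would span a triangle of area at most $v^2/2<\Delta$, contradicting the minimality of $\Delta$, so $|P_Q\cap Q''|\le 2$. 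Patching these two ranges gives $s$-regularity relative to $Q$ at all scales with constant $2$.

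I do not expect a genuine obstacle here; the content is entirely in setting up the correct extremal square. The two points that require a little care are (i) verifying at the outset that $[\Delta^{1/2},u_0]$ is a nonempty window, which is where both the trivial upper bound and the working assumption $\Delta>n^{-8/7+\varepsilon''}$ are used, and (ii) extending the control to squares of side up to $100u_0$ even though the optimisation only runs up to $u_0$, handled by the covering step above. Neither is difficult, so the proof should be short.
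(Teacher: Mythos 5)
Your proposal is correct and follows essentially the same route as the paper: maximize $|P\cap Q|v^{-s}$ over squares with side in $[\Delta^{1/2},u_0]$, use maximality for relative regularity above scale $\Delta^{1/2}/u$ and the no-small-triangle property (at most $2$ points per sub-$\Delta^{1/2}$-square) below it, pigeonhole on a $u_0$-grid for the lower bound, and tile larger squares by $u_0$-squares for the range $v\in[u_0,100u_0]$. The only addition is your explicit check that the scale window is nonempty, which the paper leaves implicit.
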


\begin{lemma}[Several step preparation]\label{lem:several_step_prep_87}
For $P$, $u_0$, and $s$ as above, there exists a set of pairwise disjoint squares $\mc Q = \{Q_j\}$, sets $P_{Q_j} \subset P\cap Q_j$, an integer $n_1$ and $u \in [\Delta^{1/2}, u_0]$ such that the following are satisfied. Denote $P' = \bigcup_j P_{Q_j}$.
\begin{itemize}
    \item $|P_{Q_j}| \in [n_1, 2n_1]$, $|Q_j| \in [u, 2u]$, and $|P'| \gtrsim \frac{n}{(\log n)^2}$,
    \item $P_{Q_j} \subset Q_j$ is $s$-regular relative to $Q_j$ at all scales with constant $2$,
    \item $v^{-s}|P'\cap Q'| \lesssim u^{-s} n_1$ for any square $Q'$ with side $v \in [u, Cu_0]$. Here we can take $C=100$. Furthermore, $u^{-s} n_1 \gtrsim u_0^{2-s} n$.
\end{itemize}
\end{lemma}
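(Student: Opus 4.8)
The plan is to iterate Lemma~\ref{lem:single_step_prep_87} and then dyadically pigeonhole. It turns out to be cleanest to run the iteration \emph{without} requiring the extracted squares to be disjoint, and to pass to a pairwise disjoint subfamily only at the very end.

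\textbf{Iteration.} Set $P^{(0)} = P$. Given the uncovered set $P^{(i-1)}$, apply (the proof of) Lemma~\ref{lem:single_step_prep_87} to the point set $P^{(i-1)}$: among all squares $Q'$ of side $v \in [\Delta^{1/2}, u_0]$ choose $Q_i$ maximising $|P^{(i-1)} \cap Q'|\, v^{-s}$, set $P^0_{Q_i} := P^{(i-1)} \cap Q_i$, and pass to $P^{(i)} := P^{(i-1)} \setminus P^0_{Q_i}$. As long as $|P^{(i-1)}|$ stays above a threshold of size $o(n)$ — this is exactly where $s = 1+\varepsilon$, $u_0 = \Delta^{-1/2} n^{-1+\varepsilon'}$ with $\varepsilon' \ggg \varepsilon$, and the standing reduction $\Delta \geq n^{-8/7+\varepsilon''}$ enter, just as in the proof of Lemma~\ref{lem:single_step_prep_87} — the conclusions of that lemma hold for $P^0_{Q_i} \subset Q_i$: it is $s$-regular relative to $Q_i$ at all scales with constant $2$, its side $u_i$ lies in $[\Delta^{1/2}, u_0]$, and $u_i^{-s}|P^0_{Q_i}| \gtrsim u_0^{2-s}|P^{(i-1)}|$. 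Stop at the first step with $|P^{(i)}| < n/2$; this happens after finitely many rounds, the sets $\{P^0_{Q_i}\}$ are pairwise disjoint with $\sum_i |P^0_{Q_i}| \geq n/2$, although the squares $Q_i$ themselves need not be disjoint.

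\textbf{Pigeonholing and the union estimate.} Since $u_i$ ranges over the polynomial-length interval $[\Delta^{1/2}, u_0]$ there are $O(\log n)$ dyadic choices for it, and $|P^0_{Q_i}| \in [1,n]$ gives $O(\log n)$ dyadic sizes; choosing the dyadic pair $(u, n_1)$ whose squares carry the most points yields an index set $J$ with $|Q_j| \in [u, 2u]$ and $|P^0_{Q_j}| \in [n_1, 2n_1]$ for $j \in J$, and $\sum_{j \in J}|P^0_{Q_j}| \gtrsim n/(\log n)^2$. Because each such $Q_j$ was chosen at a round where the uncovered set still had $\geq n/2$ points, $u^{-s} n_1 \sim u_j^{-s}|P^0_{Q_j}| \gtrsim u_0^{2-s} n$. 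Now let $j_\ast = \min J$: every $P^0_{Q_j}$ with $j \in J$ lies in $P^{(j_\ast-1)}$, so maximality at round $j_\ast$ (whose maximal value is $\sim n_1 u^{-s}$) gives, for any square $Q'$ of side $v \in [u, Cu_0]$,
\begin{equation*}
    \Bigl|\, Q' \cap \textstyle\bigcup_{j \in J} P^0_{Q_j} \,\Bigr| \;\leq\; |Q' \cap P^{(j_\ast-1)}| \;\lesssim\; n_1 (v/u)^s .
\end{equation*}

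\textbf{Disjointification and the main obstacle.} Taking $v \sim u$ above, every square of side $\sim u$ contains $\lesssim n_1$ of the points $\bigcup_{j\in J} P^0_{Q_j}$; since these sets are disjoint and each has $\geq n_1$ points, every point of $[0,1]^2$ lies in only $O(1)$ of the squares $\{Q_j\}_{j\in J}$. This bounded overlap lets us greedily select $J' \subseteq J$ with $\{Q_j\}_{j\in J'}$ pairwise disjoint and $|J'| \gtrsim |J|$, whence $\sum_{j\in J'}|P^0_{Q_j}| \geq n_1|J'| \gtrsim n_1|J| \gtrsim \sum_{j\in J}|P^0_{Q_j}| \gtrsim n/(\log n)^2$. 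Putting $P_{Q_j} := P^0_{Q_j}$ for $j \in J'$ and $P' := \bigcup_{j\in J'} P_{Q_j}$, every required property holds: pairwise disjoint squares with $|Q_j| \in [u,2u]$; $|P_{Q_j}| \in [n_1, 2n_1]$ and $|P'| \gtrsim n/(\log n)^2$; each $P_{Q_j}$ is $s$-regular relative to $Q_j$ at all scales with constant $2$; the displayed bound, restricted to $P' \subseteq \bigcup_{j\in J} P^0_{Q_j}$, yields $v^{-s}|P' \cap Q'| \lesssim u^{-s} n_1$ for all squares $Q'$ of side $v \in [u, Cu_0]$; and $u^{-s} n_1 \gtrsim u_0^{2-s} n$. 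The one genuinely delicate point is exactly this tension between the two demands: insisting on disjointness of the squares from the outset would make a generic comparison square $Q'$ overlap previously removed squares and destroy the union estimate, so the iteration must be run with possibly overlapping squares and disjointness restored only afterward via the bounded-overlap count. The remaining verifications — that the re-application threshold for Lemma~\ref{lem:single_step_prep_87} stays below $n/2$ throughout, and that the implicit constants in the bounded-overlap step are compatible with $|J'| \gtrsim |J|$ — are routine for the stated parameters.
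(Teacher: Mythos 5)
Your proposal is correct and follows essentially the same route as the paper: iterate the single-step lemma on the residual sets until fewer than $n/2$ points remain, dyadically pigeonhole on the side length and the point count, use maximality at the first selected round to get the union estimate, and then disjointify using the fact that the union bound forces each square to meet only $O(1)$ others of comparable size. The only cosmetic difference is that you phrase the disjointification as bounded pointwise overlap plus a greedy selection, whereas the paper bounds the degree of the intersection graph and takes an independent set via the chromatic number; these are the same argument.
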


First we prove the lemmas, and then we prove the estimate $\Delta \lesssim n^{-8/7+\varepsilon''}$ using the lemmas. 

\begin{proof}[Proof of Lemma \ref{lem:single_step_prep_87}]
Over all axis parallel squares $Q \subset [0, 1]^2$ with side length $u\in [\Delta^{1/2}, u_0]$, choose one maximizing the expression 
$$
|P \cap Q| u^{-s}.
$$
By the choice of $Q$, for any square $Q' \subset [0, 1]^2$ with side $u' \in [\Delta^{1/2}, u_0]$ we have 
\begin{equation} \label{J}
    |P \cap Q'| (u')^{-s} \le |P_Q| u^{-s},
\end{equation}
i.e. $|P \cap Q'| \le (u')^s u^{-s} |P_Q|$. Note that $Q'$ has width $u'/u$ in the coordinate system of the square $Q$ and so this implies that $P_Q \subset Q$ is $s$-regular relative to $Q$ at scales above $\Delta^{1/2}/u$. 
On the other hand, by assumption, the set $P_Q$ contains no triangles of area less than $\Delta$, so in particular any $\Delta^{1/2} \times\Delta^{1/2}$ square contains at most $2$ points of $P_Q$. 
Thus, $P_Q$ is $s$-regular relative to $Q$ at all scales with constant~$2$. 

For any $v \in [u, u_0]$ and any square $Q'$ with side $v$ the inequality $v^{-s}| P\cap Q'| \le u^{-s} |P_Q|$ follows directly from definition. For $v \in [u_0, C u_0]$ we can divide $Q'$ into a union of $C^2$ squares with side $u_0$ and apply the inequality above to each one of them.
Similarly, by dividing $[0,1]^2$ into $u_{0}^{-2}$ axis parallel $u_{0} \times u_{0}$ squares we see that, by the pigeonhole principle, there exists an $u_{0} \times u_{0}$ square $Q_0 \subset [0, 1]^2$ which contains $\gtrsim u_0^2 |P|$ points of $P$. Taking $Q'=Q_0$ in \eqref{J}, we get $|P_Q| \gtrsim u^s u_0^{2-s} |P|$.

We conclude that the square $Q$ satisfies the properties of the lemma.
\end{proof}

\begin{proof}[Proof of Lemma \ref{lem:several_step_prep_87}]
    We construct the sequence of squares $Q_j$ by iterating Lemma \ref{lem:single_step_prep_87}. Namely, let $P^{(0)} = P$ and suppose that for some $j \ge 1$ we already defined the set $P^{(j-1)}$ and squares $Q_{1}, \ldots, Q_{j-1}$. 

    If $|P^{(j-1)}| < n/2$, then we stop the procedure. Otherwise, apply Lemma \ref{lem:single_step_prep_87} to the set of points $P^{(j-1)}$ and parameters $u_0, s$ as above. Then for some $u_j \in [\Delta^{1/2}, u_0]$ we obtain a $u_j \times u_j$ square $Q_j \subset [0, 1]^2$ and a set $P_{Q_j} = P^{(j-1)} \cap Q_j \subset P \cap Q_j$. Define $P^{(j)} = P^{(j-1)} \setminus P_{Q_j}$, and repeat this step with $j$ replaced by $j+1$.

    Suppose that this algorithm stopped after $t$ steps.
    To each index $j \in \{1, \ldots, t\}$ we can assign a pair of integers $c(j), d(j) \lesssim \log n$ such that $u_j \in [2^{-c(j)-1}, 2^{-c(j)}]$ and $|P_{Q_j}| \in [2^{d(j)}, 2^{d(j)+1}]$. Since the sets $P_{Q_j}$ cover at least $n/2$ points of $P$, we can find some $c, d$ such that the sets $P_{Q_j}$ with $c(j) = c$ and $d(j)=d$ cover $\gtrsim n/\log^2 n$ points of $P$. Let $S \subset \{1, \ldots, t\}$ be the set of these indices and let $u = 2^{-c-1}$ and $n_1 = 2^d$. Then the first condition of the lemma is satisfied for the family of squares $\{Q_j, ~j\in S\}$ and sets $P_{Q_j}$.

    The second condition follows from Lemma \ref{lem:single_step_prep_87}. Let $j_0$ be the minimum element of $S$, then we have $P' = \bigcup_{j\in S} P_{Q_j} \subset P^{(j_0)}$ and so the third condition also follows from Lemma \ref{lem:single_step_prep_87}. The lower bound $u^{-s} n_1 \gtrsim u_0^{2-s} n$ follows from the fact that $|P^{(j_0)}| > n/2$.
    
    Lastly, we would like to make sure that the squares $Q_j$ we constructed are pairwise disjoint. Currently this is not quite the case, but it turns out that a large subcollection of these squares does satisfy this property. Define a graph $G$ on the set $S$ where $i$ and $j$ are connected by an edge if squares $Q_i$ and $Q_j$ intersect. Note that for fixed $i \in S$ all squares $Q_j$ with $(i, j) \in G$ are contained in a square $\Upsilon$ with side $10 u$ around $Q_i$. Since $10u \le 10u_0 \le Cu_0$, by the third condition applied to $Q'=\Upsilon$, it follows that the union of sets $P_{Q_j}$ over $(i, j) \in G$ for fixed $i$ has size at most $\lesssim 10^s n_1$.

    So since sets $P_{Q_j}$ are pairwise disjoint and have size at least $n_1$, we conclude that the degree of $i$ in $G$ is bounded by some absolute constant $D \lesssim 10^s$. Since this holds for every vertex $i$ of $G$, the chromatic number of $G$ satisfies $\chi(G) \leq D+1$, and therefore there must exist an independent set $S' \subset S$ of size 
    $$|S'| \geq \frac{|S|}{\chi(G)} \geq \frac{|S|}{D+1}.$$ We thus obtain a family of squares $\mathcal Q = \{Q_j,~ j\in S'\}$ satisfying all conditions of the lemma.
\end{proof}

We are now ready to complete our alternative proof of the Koml\'os, Szemer\'edi, and Pintz bound.

\begin{proof}[Proof of $\Delta(n) \lesssim n^{-8/7+\varepsilon''}$]
    Fix a set of disjoint squares $\mathcal Q = \{Q_j\}$, sets $P_{Q_j} \subset P\cap Q_j$ and parameters $n_1, u$ as in Lemma \ref{lem:several_step_prep_87}. In particular, $P_{Q_j}$ is $s$-regular at all scales relative to $Q_j$ with constant $2$.
    Denote $\sigma = 1-\varepsilon$ and, for each square $Q_j$, apply Theorem \ref{marstrand} to the set $P_{Q_j}$ with parameters $s, \sigma$ and $C=2$. We get a set of lines $L_j$ spanned by pairs of points in $P_{Q_j}$ such that $|L_j| \ge K^{-1} |P_{Q_j}|^2$ and the set of directions $\theta(L_j) = \{\theta(\ell),~\ell \in L_j\}$ is $\sigma$-regular above scale $\delta \sim |P_{Q_j}|^{-1/s}$ with constant $K = K(2, s, \sigma)$. In particular, any interval $I \subset S^1$ of length $\delta$ contains at most $K \delta^\sigma |L_{j}|$ elements of $\theta(L_{j})$. 

    For each $j$, let $p_j \in Q_j$ be the middle point of the square $Q_j$ and note that all lines in $L_{j}$ are at distance at most $10 u$ to $p_j$. 
    Let $w = \max\{ C u,  n^{\varepsilon'} (nu)^{-1}\}$ where $C$ is an absolute constant consistent with the third condition from Lemma \ref{lem:several_step_prep_87}. 
    By design, we must have that any $w$-square contains $\lesssim (w/u)^{s}$ points of $\Pi = \{p_j\}$. 
   By Proposition \ref{prop:incidence_lower_bound_sep_weighted}, if  
    \begin{equation}\label{ineqmu}
    \mu_\Pi \mu_L^2 \le w^2 \eta n^{-\varepsilon}    
    \end{equation}
    holds with $\mu_\Pi = (w/u)^s / |\Pi|$, $\mu_L = K\delta^{\sigma}$ and $\eta = \delta$, then we have $B(w/10; \Pi, L) \gtrsim n^{-\varepsilon} \eta \mu_L^{-1}$, where $L = \bigcup L_j$ (in order to apply Proposition \ref{prop:incidence_lower_bound_sep_weighted}, we shift the lines in $L_j$ so that they pass through $p_j$, see Remark \ref{shift} after the statement of proposition).

    Using 
    $$
    \delta \sim n_1^{-1/s}, ~~ |\Pi| \gtrsim \frac{n}{n_1 \log^{2}n}, ~~ s = 1+\varepsilon, 
    $$
    one can easily check that (\ref{ineqmu}) indeed holds for any $w \gtrsim n^{\varepsilon'} (nu)^{-1}$. Thus, we get
    $$
    B(w/10; \Pi, L) \gtrsim n^{-\varepsilon} \eta \mu_L^{-1} \ge n^{-6\varepsilon}.
    $$
    Since each point $p_i \in \Pi$ has at least $n_1$ points of $P$ in its $u$-neighbourhood and $w \geq Cu$ for the $C$ above, we conclude that 
    $$
    B(w; P, L) = \frac{I(w; P, L)}{w |P| |L|} \ge \frac{n_1 I(w/10; \Pi, L)}{w |P| |L|} \gtrsim B(w/10;P', L) / \log^2 n \gtrsim n^{-7\varepsilon}.
    $$
    We are almost ready to apply Lemma \ref{lem:low_scales} to the set of points $P$, the set of lines $L$, $w = w$, $u = u$ and $\kappa = n^{-7\varepsilon}$. We have 
    $$
    I(w; P, L) \ge n^{-7\varepsilon} w |P| |L| \gtrsim |L|,
    $$
    and note that
    $$
    |L| = \sum_{j} |L_{j}| \gtrsim \frac{n}{n_1 (\log n)^{2}} n_1^2 = n n_1 \log^{-2}n.
    $$
    By Lemma \ref{lem:several_step_prep_87}, we have $u^{-s} n_1 \gtrsim u_0^{2-s} n$. So, using $u \ge \Delta^{1/2}$ and $u_0 = \Delta^{-1/2} n^{-1+\varepsilon'}$, we conclude that $n_1 \ge n^\varepsilon$ provided that $\varepsilon' \gtrsim \varepsilon$. It follows that $|L| \gtrsim |P|$ and so one can indeed apply Lemma \ref{lem:low_scales} to get
    \begin{equation}\label{2terms}
    \Delta/|\log \Delta| \lesssim \max\{u, u^{2/3} w^{1/3}\} |P|^{-1/3} |L|^{-1/3} \kappa^{-2/3} + u |P|^{-1} \kappa^{-1}.    
    \end{equation}
    Note that
    $$
    u|P|^{-1} \kappa^{-1} \le u_0 n^{-1+7\varepsilon} \le \Delta^{-1/2} n^{-2+2\varepsilon'}
    $$
    and so if this term dominates the right hand side of (\ref{2terms}) then we get $\Delta \le n^{-4/3 +\varepsilon''}$. Thus, we may assume that the first term is dominating. So after some simplifications we get
    $$
    \Delta \lesssim n^{\varepsilon'} \max\{u, u^{1/3} n^{-1/3}\} n^{-2/3} n_1^{-1/3},
    $$
    we have $n_1 \gtrsim u^s u_0^{2-s} n \gtrsim u \Delta^{-1/2}$ and $u \le u_0$ which gives
    $$
    \Delta^{5/6} \lesssim n^{\varepsilon'} \max\{u^{2/3}, n^{-1/3}\} n^{-2/3} \le u_0^{2/3} n^{-2/3+\varepsilon'} \le \Delta^{-1/3} n^{-4/3+2\varepsilon'}
    $$
    and so we get $\Delta \le n^{-8/7+\varepsilon''}$ provided that $\varepsilon' \lll \varepsilon''$. This completes the proof.
\end{proof}

\subsection{Proof of Theorem \ref{thm:main_theorem}}

In this section we finally prove our main result, namely that $\Delta(n) \lesssim n^{-8/7-1/2000}$ holds for sufficiently large $n$.

For large enough $n$, let $P \subset [0, 1]^2$ be an $n$-element set of points containing no triangles of area at most $\Delta$. Suppose that we already know that $\Delta(n) \lesssim n^{-\gamma_0+\varepsilon}$ for some constant $\gamma_0 \ge 8/7$ and all $\varepsilon >0$. To prove Theorem \ref{thm:main_theorem} it will be sufficient to take $\gamma_0 = 8/7$ but our argument works for an arbitrary starting exponent $\gamma_0$. By iterating this argument one can get slight improvements of the resulting bound.

Let 
\begin{itemize}
    \item $\varepsilon''' \ggg \varepsilon'' \ggg \varepsilon' \ggg \varepsilon > 0$ be a sequence of rapidly decreasing small constants,
    \item $u_0 > u_1$ be distance scale parameters, which will specified at the end of the proof,
    \item $s_1 = 1+\varepsilon$, $0.5+\varepsilon < s_2 < 1$ be dimension parameters (for upcoming applications of Theorem \ref{marstrand} and Theorem \ref{thm:finite_set_dir_set}).
\end{itemize}

The precise values of parameters $u_0, u_1, s_2$ which we need to take arise from an optimization problem which we get at the end of the argument. The following parameters will turn out to be sufficient to prove the bound in Theorem \ref{thm:main_theorem}:
\begin{equation}\label{choice}
\gamma_0 = 8/7, ~~ s_2 = 21/22, ~~ u_0 = n^{-3/7-0.0016}, ~~ u_1 = n^{-4/7+0.047}.    
\end{equation}
The argument below will be carried out with arbitrary values of parameters $\gamma_0, s_2, u_0, u_1$ and we will write down the inequalities which they need to satisfy along the way. Then, at the end, we collect all constraints and justify the choice (\ref{choice}).

\begin{lemma}[Single step preparation]\label{lem:single_step_prep_new}
Let $P$, $u_0, u_1$, and $s_1, s_2$ be arbitrary parameters as above. Then one of the following two options holds: 

(Square case) For some $u \in [u_1, u_0]$ there exists a $u \times u$ square $Q \subset [0,1]^2$ and $P_Q = P\cap Q$ such that 
\begin{itemize}
    \item $P_Q \subset Q$ is $s_1$-regular relative to $Q$ above scale  $\delta \sim n^{-\varepsilon'} u_1/u$,
    \item We have $v^{-s}|P\cap Q'| \lesssim u^{-s} |P_Q|$ for any square $Q'$ with side $v \in [u, C u_0]$. Here we can take $C=100$. In particular, $u^{-s} |P_Q| \gtrsim u_0^{2-s} |P|$. 
\end{itemize}

(Rectangle case) For some $a \le b \le u_1$, such that $ab \ge \Delta$, there exists an $a \times b$ rectangle $R \subset [0, 1]^2$ and $P_R = P\cap R$ such that 
\begin{itemize}
    \item $P_R \subset R$ is $s_2$-regular relative to $R$ at all scales with constant $2$, and for any strip $T$ of width $w \ge \Delta/ab$ relative to $R$ we have $|P_R\cap T| \lesssim w^\varepsilon |P_R|$, 
    \item We have $|P_R| \gtrsim n^{-\varepsilon'} b^{s_2} u_1^{s_1 - s_2} u_0^{2-s_1} |P|$, and any $w$-square $Q'$ satisfies 
    \begin{equation*}
    |P \cap Q'|n^{-\varepsilon'} \lesssim \begin{cases}
        (w/b)^{s_2} |P_R|, ~~~ w \in [b, u_1],\\
        (u_1/b)^{s_2} (w/u_1)^{s_1} |P_R|, ~~~ w \in [u_1, Cu_0],
    \end{cases}    
    \end{equation*}
    where we can take $C=100$.
\end{itemize}
\end{lemma}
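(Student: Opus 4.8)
The plan is to run a two-level selection argument: a coarse maximization over squares at scales between $u_1$ and $u_0$, which always produces the large-scale control common to both conclusions, followed by a dichotomy according to whether the selected $s_1$-regular square stays $s_1$-regular a little below scale $u_1$, or instead hides a point concentration that can be refined into an $s_2$-regular rectangle.

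First I would carry out the coarse step, following the proof of Lemma~\ref{lem:single_step_prep_87}: among all axis-parallel squares $Q\subset[0,1]^2$ of side $u\in[u_1,u_0]$ pick one, $Q_0$, maximizing $|P\cap Q|\,u^{-s_1}$, and set $P_{Q_0}=P\cap Q_0$ with $u=\operatorname{side}(Q_0)$. Maximality gives $|P\cap Q'|\le|P_{Q_0}|(v/u)^{s_1}$ for every square $Q'$ of side $v\in[u_1,u_0]$; subdividing a side-$v$ square with $v\in[u_0,100u_0]$ into $O(1)$ side-$u_0$ squares upgrades this to $v^{-s_1}|P\cap Q'|\lesssim u^{-s_1}|P_{Q_0}|$ for all $v\in[u,100u_0]$, and pigeonholing a grid square of side $u_0$ carrying $\gtrsim u_0^2|P|$ points gives $u^{-s_1}|P_{Q_0}|\gtrsim u_0^{2-s_1}|P|$; passing to the coordinate frame of $Q_0$, $P_{Q_0}$ is $s_1$-regular relative to $Q_0$ above absolute scale $u_1$. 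Now fix a suitable constant $C_0$. If every square $Q''$ of side $v\in[n^{-\varepsilon'}u_1,u_1]$ obeys $|P\cap Q''|\le C_0|P_{Q_0}|(v/u)^{s_1}$, then $P_{Q_0}$ is also $s_1$-regular relative to $Q_0$ above scale $\delta\sim n^{-\varepsilon'}u_1/u$, and we are in the \emph{Square case} with $Q=Q_0$; every stated bound was recorded above.

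Otherwise there is a square $Q''$ of side $v\in[n^{-\varepsilon'}u_1,u_1]$ with $|P\cap Q''|>C_0|P_{Q_0}|(v/u)^{s_1}$; since $|P_{Q_0}|\gtrsim u^{s_1}u_0^{2-s_1}|P|$ this yields
$$|P\cap Q''|\,v^{-s_2}\gtrsim v^{s_1-s_2}u_0^{2-s_1}|P|.$$
At this point I would invoke the rectangle construction of \S\ref{Jsection}: among rectangles $R\subset[0,1]^2$ with sides $a\le b\le u_1$ and $ab\ge\Delta$, pick one maximizing $|P\cap R|\,a^{-\varepsilon}b^{-s_2+\varepsilon}$, and set $P_R=P\cap R$. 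As in \S\ref{Jsection}, maximality makes $P_R$ $s_2$-regular relative to $R$ above scale $\sqrt{\Delta/ab}$ and gives $|P_R\cap T|\lesssim w^{\varepsilon}|P_R|$ for strips $T$ of width $w\ge\Delta/ab$ relative to $R$ (the threshold $\Delta/ab$ being exactly what forces the comparison rectangle to have area $\ge\Delta$); and since $P$ has no triangle of area $\le\Delta$, a square of side at most $\sqrt{2\Delta/ab}$ relative to $R$ corresponds to a box of $\mathbb{R}^2$-area at most $2\Delta$, hence contains at most two points of $P$, so $P_R$ is $s_2$-regular relative to $R$ at \emph{all} scales with constant $2$. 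Comparing the maximizer with $Q''$ viewed as a $v\times v$ rectangle and using the displayed bound gives $|P\cap R|\,a^{-\varepsilon}b^{-s_2+\varepsilon}\gtrsim v^{s_1-s_2}u_0^{2-s_1}|P|$; bounding $a^{\varepsilon}b^{s_2-\varepsilon}\ge b^{s_2}(\Delta/b^2)^{\varepsilon}$, using $v\ge n^{-\varepsilon'}u_1$, and absorbing the harmless powers $(\Delta/b^2)^{\varepsilon}$ and $n^{-\varepsilon'(s_1-s_2)}$ into $n^{-\varepsilon'}$ (legitimate since $\varepsilon\lll\varepsilon'$, $s_1-s_2<1$, $b\le u_1$, and $u_0,u_1$ lie in their prescribed ranges) yields the population bound $|P_R|\gtrsim n^{-\varepsilon'}b^{s_2}u_1^{s_1-s_2}u_0^{2-s_1}|P|$. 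Finally the two-regime square bound for a square $Q'$ of side $w$ follows from the same two maximalities: for $w\in[b,u_1]$ from the rectangle maximization applied to $Q'$ as a $w\times w$ rectangle, and for $w\in[u_1,100u_0]$ from $|P\cap Q'|\le|P_{Q_0}|(w/u)^{s_1}$ together with the population bound. This is the \emph{Rectangle case}.

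The main obstacle is not any single estimate but the bookkeeping: one must check that the rectangle maximizer really inherits enough mass from the concentration square $Q''$ to meet the population lower bound, and that this bound is precisely what makes the $w\in[u_1,100u_0]$ part of the two-regime square bound consistent — the chain runs rectangle-scale maximization $\Rightarrow$ mass of $Q''$ $\Rightarrow$ square-scale lower bound on $|P_{Q_0}|$, and every $n^{\pm\varepsilon'}$, $(\,\cdot\,)^{\varepsilon}$ and $O(1)$ loss along the way must be shown absorbable. The scales $u_0,u_1$ and the separation $\varepsilon\lll\varepsilon'\lll\varepsilon''\lll\varepsilon'''$ are calibrated exactly so that these losses close; exhaustiveness of the dichotomy itself is immediate.
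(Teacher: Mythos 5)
Your proposal is correct in substance but organizes the case split differently from the paper. The paper runs the square maximization of $|P\cap Q|u^{-s_1}$ over the larger range of side lengths $[n^{-\varepsilon'}u_1,u_0]$ and branches on whether the maximizer's side $u$ lands in $[u_1,u_0]$ (Square case) or in $[n^{-\varepsilon'}u_1,u_1)$ (Rectangle case, with the maximizer $Q$ itself fed as a competitor into the rectangle maximization). You instead maximize only over $[u_1,u_0]$ and branch on whether $s_1$-regularity persists down to scale $n^{-\varepsilon'}u_1$, producing a witness square $Q''$ of side $v\in[n^{-\varepsilon'}u_1,u_1]$ when it fails. These are essentially equivalent: your $Q''$ carries $\gtrsim v^{s_1}u_0^{2-s_1}|P|$ points and plays exactly the role of the paper's small maximizer $Q$, so the population bound and the $w\in[b,u_1]$ regime go through verbatim. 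Your version is arguably cleaner in that the Square case needs no separate discussion of scales below $u_1$, at the cost of introducing the threshold constant $C_0$.

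One step is under-justified as written: for the regime $w\in[u_1,Cu_0]$ you say the bound follows from $|P\cap Q'|\le|P_{Q_0}|(w/u)^{s_1}$ "together with the population bound." The population lower bound on $|P_R|$ alone does not close this — it would require an upper bound $|P_{Q_0}|u^{-s_1}\lesssim n^{O(\varepsilon')}u_0^{2-s_1}|P|$, which amounts to $u_0u_1\gtrsim n^{-O(\varepsilon')}$ and is false for the actual parameters (\ref{choice}). The correct chain, which your final paragraph gestures at but should be made explicit, is: the failure inequality gives $|P_{Q_0}|u^{-s_1}<C_0^{-1}|P\cap Q''|\,v^{-s_1}$; feeding $Q''$ into the rectangle maximality gives $|P\cap Q''|\le v^{s_2}|P_R|\,a^{-\varepsilon}b^{-s_2+\varepsilon}$; hence $|P_{Q_0}|u^{-s_1}\lesssim v^{s_2-s_1}|P_R|a^{-\varepsilon}b^{-s_2+\varepsilon}\le (n^{-\varepsilon'}u_1)^{s_2-s_1}n^{\varepsilon'/3}b^{-s_2}|P_R|\lesssim n^{\varepsilon'}(u_1/b)^{s_2}u_1^{-s_1}|P_R|$, using $v\ge n^{-\varepsilon'}u_1$, $s_2<s_1$, and $a^{-\varepsilon}b^{\varepsilon}\le\Delta^{-\varepsilon}\le n^{\varepsilon'/3}$. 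This is precisely the computation the paper performs in (\ref{sqrbound}) with its square $Q$ in place of your $Q''$. With that inserted, your argument is complete.
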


\begin{lemma}[Several step preparation]\label{lem:several_step_prep_new}
Let $P$, $u_0, u_1$, and $s_1, s_2$ be as above. Then one of the following two options holds: 

(Square case)  There exists a set of disjoint squares $\mc Q = \{Q_j\}$, disjoint sets $P_{Q_j} \subset P\cap Q_j$, numbers $n_1\ge 1$ and $u \in [u_1, u_0]$ such that the following are satisfied. Denote $P' = \bigcup P_{Q_j}$.
\begin{itemize}
    \item $|P_{Q_j}| \in [n_1, 2n_1]$, $|Q_j| \in [u, 2u]$, and $|P'| \gtrsim \frac{n}{(\log n)^2}$.
    \item $P_{Q_j} \subset Q_j$ is $s_1$-regular relative to $Q_j$ above scale $\delta \sim n^{-\varepsilon'} u_1/u$, 
    \item We have $v^{-s_1}|P'\cap Q'| \lesssim u^{-s_1} n_1 $ for any square $Q'$ with side $v \in [u, Cu_0]$. Here we can take $C=100$. Furthermore, $u^{-s_1} n_1 \gtrsim u_0^{2-s_1} n$. 
\end{itemize}
(Rectangle case) There exists a set of pairwise disjoint rectangles $\mc R = \{R_j\}$, disjoint sets $P_{R_j} \subset P \cap R_j$, numbers $n_1\ge 1$ and $a \le b \le u_1$ such that $a b \ge \Delta$ and the following conditions are satisfied. Denote $P' = \bigcup P_{R_j}$.
\begin{itemize}
    \item $|P_{R_j}| \in [n_1, 2n_1]$, the sides of $R_j$ lie in the intervals $[a, 2a], [b, 2b]$ and $|P'| \gtrsim n^{1-2\varepsilon'}$,
    \item $P_{R_j} \subset R_j$ is $s_2$-regular relative to $R_j$ at all scales with constant $2$, and for any strip $T$ of width $w \ge \Delta/\operatorname{Area}(R_j)$ relative to $R$ we have $|P_{R_j}\cap T| \lesssim w^\varepsilon |P_{R_j}|$,
    \item $n_1 \gtrsim b^{s_2} u_1^{s_1 - s_2} u_0^{2-s_1} n^{1-\varepsilon'}$, and any $w$-square $Q'$ satisfies 
    \begin{equation*}
    |P' \cap Q'|n^{-\varepsilon'} \lesssim \begin{cases}
        (w/b)^{s_2} n_1, ~~~ w \in [b, u_1],\\
        (u_1/b)^{s_2} (w/u_1)^{s_1} n_1, ~~~ w \in [u_1, Cu_0],
    \end{cases}    
    \end{equation*}
    where we can take $C=100$.
\end{itemize}
\end{lemma}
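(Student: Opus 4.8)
The plan is to follow the scheme of the proof of Lemma~\ref{lem:several_step_prep_87}: iterate the single-step preparation Lemma~\ref{lem:single_step_prep_new} and then pigeonhole to clean up the output. Set $P^{(0)} = P$, and as long as $|P^{(j-1)}| \ge n/2$ apply Lemma~\ref{lem:single_step_prep_new} to $P^{(j-1)}$, obtaining either a square $Q_j$ with $P_{Q_j} = P^{(j-1)}\cap Q_j$ (square case) or a rectangle $R_j$ with $P_{R_j} = P^{(j-1)}\cap R_j$ (rectangle case); delete the chosen set and continue. The chosen sets are pairwise disjoint and, since the process stops only once fewer than $n/2$ points remain, together they cover $\ge n/2$ points of $P$. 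The new feature compared with Lemma~\ref{lem:several_step_prep_87} is that the steps come in two flavours, so I would first observe that either the square steps or the rectangle steps account for $\ge n/4$ of the covered points, and then commit to that flavour --- producing the square-case conclusion in the first situation and the rectangle-case conclusion in the second.

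Working inside the chosen flavour, say the square case, each square $Q_j$ has side $u_j \in [u_1, u_0]$ and $|P_{Q_j}| \le n$, so there are only $O(\log^2 n)$ possible dyadic values for the pair $(u_j, |P_{Q_j}|)$. Pigeonholing, I would fix a pair attained by a subfamily whose $P_{Q_j}$'s cover $\gtrsim n/\log^2 n$ points, and let $u$ and $n_1$ be the corresponding dyadic scales, so $|Q_j| \in [u,2u]$ and $|P_{Q_j}| \in [n_1, 2n_1]$ for the retained indices. The $s_1$-regularity of each $P_{Q_j}$ above scale $\delta \sim n^{-\varepsilon'} u_1/u$ is inherited verbatim from Lemma~\ref{lem:single_step_prep_new}, the rounding of $u$ costing only a constant. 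For the ``third bullet'' packing estimate I would take $j_0$ to be the smallest retained index: then $P' := \bigcup_j P_{Q_j} \subset P^{(j_0)} \subset P^{(j_0-1)}$ with $|P^{(j_0-1)}| \ge n/2$, and the second bullet of Lemma~\ref{lem:single_step_prep_new} applied at step $j_0$ gives $v^{-s_1}|P'\cap Q'| \le v^{-s_1}|P^{(j_0-1)}\cap Q'| \lesssim u_{j_0}^{-s_1}|P_{Q_{j_0}}| \lesssim u^{-s_1} n_1$ for every square $Q'$ of side $v \in [u, Cu_0]$, while the ``in particular'' clause of that bullet, together with $u \le u_{j_0}$ and $|P_{Q_{j_0}}| \le 2n_1$ and $|P^{(j_0-1)}| \ge n/2$, yields $u^{-s_1} n_1 \gtrsim u_0^{2-s_1} n$.

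The last point is disjointness of the squares, handled exactly as in the proof of Lemma~\ref{lem:several_step_prep_87}: form a graph on the retained indices with an edge whenever two squares intersect; every square meeting a fixed $Q_i$ lies inside a single $10u$-square, so the packing estimate just proved bounds the total size of the pairwise disjoint sets $P_{Q_j}$ carried by $Q_i$'s neighbours by $\lesssim n_1$, whence $G$ has bounded degree, bounded chromatic number, and an independent subfamily retains a constant fraction of the points. The rectangle case is the same argument with two adjustments: the dyadic pigeonhole now ranges over the ordered pair of side lengths $a_j \le b_j \le u_1$ (with $a_j b_j \ge \Delta$) as well as over $|P_{R_j}|$, so there are $O(\log^3 n) \le n^{\varepsilon'}$ classes and one retains $\gtrsim n^{1-\varepsilon'}$, then $\gtrsim n^{1-2\varepsilon'}$, points after the colouring step; and the $s_2$-regularity, the $w^\varepsilon$-strip bound, the lower bound $n_1 \gtrsim b^{s_2} u_1^{s_1-s_2} u_0^{2-s_1} n^{1-\varepsilon'}$, and the two-regime square-count for $P'$ all transfer from the corresponding bullets of Lemma~\ref{lem:single_step_prep_new} evaluated at the minimal retained index, with the bounded dyadic losses (and, if one insists on $ab\ge\Delta$ rather than $ab\ge\Delta/4$, a harmless factor of $4$) absorbed into constants. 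I expect the main obstacle here to be organisational rather than conceptual: carrying two geometric scales instead of one through the pigeonhole and the graph-colouring steps while keeping all four displayed inequalities of each case mutually consistent.
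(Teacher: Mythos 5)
Your proposal is correct and follows essentially the same route as the paper: greedy iteration of Lemma \ref{lem:single_step_prep_new} until fewer than $n/2$ points remain, dyadic pigeonholing on the type (square vs.\ rectangle), the geometric scales, and the cardinalities, transfer of the regularity and packing bounds from the minimal retained index, and a bounded-degree (resp.\ degree $\lesssim n^{\varepsilon'}$ in the rectangle case) graph-colouring step to extract a disjoint subfamily. The loss accounting you give ($\gtrsim n/\log^2 n$ in the square case, $\gtrsim n^{1-2\varepsilon'}$ after the colouring in the rectangle case) matches the paper's.
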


\begin{proof}[Proof of Lemma \ref{lem:single_step_prep_new}]
Over all axis parallel squares $Q \subset [0, 1]^2$ with side length $u\in [u_1 n^{-\varepsilon'}, u_0]$, choose one maximizing the expression 
$$
|P \cap Q| u^{-s_1}.
$$
By the choice of $Q$, for any square $Q' \subset [0, 1]^2$ with side $u' \in [u_1 n^{-\varepsilon'}, u_0]$ we get $|P \cap Q'| \le (u')^{s_1} u^{-s_1} |P_Q|$. Like before, this implies that $P_Q \subset Q$ is $s_1$-regular relative to $Q$ at scales above $\delta = n^{-\varepsilon'} u_1/u $. Furthermore, by the pigeonhole principle, there exists an $u_{0} \times u_{0}$ axis-parallel square $Q_0 \subset [0, 1]^2$ which contains $\gtrsim u_0^2 |P|$ points of $P$. By taking $Q'=Q_0$, it follows from the maximality of $Q$ that $|P_Q| \gtrsim u^{s_1} u_0^{2-s_1} |P|$.

If we have $u \ge u_1$ then $Q$ satisfies the Square Case of the lemma. Otherwise, we have $u \in [u_1 n^{-\varepsilon'}, u_1)$. Among all rectangles $R \subset \R^2$ with sides $a \times b$ so that $a \le b \le u$ and $ab\ge \Delta$ choose one maximizing the expression
$$
|P \cap R| a^{-\varepsilon} b^{-s_2+\varepsilon}.
$$
Let $P_R = P \cap R$. Note that $R = Q$ is a possible choice of such a rectangle (provided that $u^2 \ge \Delta$ which will be true for our choice of $u_1$ and the lower bound $u \ge n^{-\varepsilon'} u_1$) and so we get
$$
|P_R| a^{-\varepsilon} b^{-s_2+\varepsilon} \ge |P_Q| u^{-s_2} \gtrsim u^{s_1-s_2} u_0^{2-s_1} |P| \ge n^{-\varepsilon'/2} u_1^{s_1-s_2} u_0^{2-s_1} |P|.
$$
Here we used the lower bound $u \ge n^{-\varepsilon'} u_1$ and an inequality $s_1-s_2 \le 0.5$.
We have $a \ge ab \ge \Delta$ and so $a^{-\varepsilon} \le \Delta^{-\varepsilon} \le n^{\varepsilon'/3}$ and we conclude that $|P_R| \ge n^{-\varepsilon'} b^{s_2} u_1^{s_1 - s_2} u_0^{2-s_1} |P|$. Similarly, for any $w$-square $Q'$ with $w \in [b,u]$ the definition of $R$ implies
$$
|P \cap Q'| \le w^{s_2} |P_R| a^{-\varepsilon} b^{-s_2+\varepsilon} \le n^{2\varepsilon} (w/b)^{s_2} |P_R|,
$$
and for $w \in [u, u_0]$ the definition of $Q$ implies
\begin{equation}\label{sqrbound}
|P \cap Q'| \le w^{s_1} |P_Q| u^{-s_1} \le w^{s_1} u^{s_2-s_1} a^{-\varepsilon} b^{-s_2+\varepsilon} |P_R|\le
\end{equation}
$$
\le w^{s_1} (n^{-\varepsilon'} u_1)^{s_2-s_1} n^{\varepsilon'/3} b^{-s_2} |P_R| \le n^{\varepsilon'} (u_1/b)^{s_2} (w/u_1)^{s_1} |P_R|,   
$$
where we again used the lower bound $u\ge n^{-\varepsilon'} u_1$ and the upper bound $a^{-\varepsilon} \le n^{\varepsilon'/3}$.

For squares $Q'$ with side $w \in [u_0, Cu_0]$ we can deduce the same bound as above by first tiling $Q'$ with $\sim C^2$ squares $u_0\times u_0$ and applying (\ref{sqrbound}) to each one of them (note that the implied constant in the bound gets multiplied by a power of $C$). We conclude that the rectangle $R$ satisfies the second conclusion of the Rectangle Case of the lemma. 

Now let $R' \subset R$ be a rectangle with sides $wa \times wb$ parallel to the sides of $R$. If $w^2 ab \ge \Delta$ then $R'$ is a valid choice for a rectangle in the definition of $R$ and so we get
$$
|P_R \cap R'| (aw)^{-\varepsilon} (bw)^{-s_2+\varepsilon}\le |P_R| a^{-\varepsilon} b^{-s_2+\varepsilon}
$$
$$
|P_R \cap R'| \le w^{s_2} |P_R|
$$
and so $P_R \subset R$ is $s_2$-regular relative to $R$ above scale $\sqrt{\Delta/ab}$. By assumption, the set $P_R$ contains no triangles of area at most $\Delta$ and so, in particular, any $(\Delta a/b)^{1/2} \times (\Delta b/a)^{1/2}$ rectangle $R' \subset R$ contains at most $2$ points of $P_R$. 
Thus, $P_R$ is $s_2$-regular relative to $R$ at all scales with constant $C=2$.

Finally, let $T$ be a strip of width $w$ relative to $R$. Then the intersection $R\cap T$ can be covered by a rectangle with sides $a'\le b'$ such that $a'b' \sim wab$ and $a'/b' \ge wa/b$. So as long as $wab \gtrsim \Delta$, we get 
$$
|P_R\cap T| (a')^{-\varepsilon} (b')^{-s_2+\varepsilon} \le |P\cap R'| (a')^{-\varepsilon} (b')^{-s_2+\varepsilon} \le |P_R| a^{-\varepsilon} b^{-s_2+\varepsilon},
$$
using a simple bound $(a'/a)^\varepsilon (b'/b)^{s_2-\varepsilon} \le (a'b'/ab)^\varepsilon \lesssim w^\varepsilon$ we obtain $|P_R\cap T| \lesssim w^\varepsilon |P_R|$. This shows that $R$ satisfies the first property of the Rectangle Case of the lemma.
\end{proof}

\begin{proof}[Proof of Lemma \ref{lem:several_step_prep_new}]
    We construct the sequence of squares and rectangles by iterating Lemma \ref{lem:single_step_prep_new}. Namely, let $P^{(0)} = P$ and suppose that for some $j \ge 1$ we already defined the set $P^{(j-1)}$ and for each $i = 1, \ldots, j-1$ we picked a square $Q_i$ or a rectangle $R_i$. 

    If $|P^{(j-1)}| < n/2$ then we stop the procedure. Otherwise, apply Lemma \ref{lem:single_step_prep_new} to the set of points $P^{(j-1)}$ and parameters $u_0, u_1, s_1, s_2$ as above. Then either a Square or Rectangle case holds. In the former case, we get some $u_j \in [u_1, u_0]$ and a $u_j\times u_j$ square $Q_j \subset [0, 1]^2$ and a set $P_{Q_j} = P^{(j-1)} \cap Q_j \subset P \cap Q_j$. Define $P^{(j)} = P^{(j-1)} \setminus P_{Q_j}$ and repeat this step with $j$ replaced by $j+1$. In the latter case, we get some $a_j \le b_j \le u_1$ such that $a_j b_j \ge \Delta$ and an $a\times b$ rectangle $R_j \subset [0, 1]^2$ and a set $P_{R_j} = P^{(j-1)} \cap R_j \subset P\cap R_j$. Define $P^{(j)} = P^{(j-1)} \setminus P_{R_j}$ and repeat this step with $j$ replaced by $j+1$.

    Suppose that this algorithm stopped after $t$ steps.
    To each index $j \in \{1, \ldots, t\}$ we can assign a letter $C_j \in \{Q, R\}$ depending on which of the two cases of Lemma \ref{lem:single_step_prep_new} happened at step $j$. For Square cases, we define a pair of integers $c(j), d(j) \lesssim \log n$ such that $u_j \in [2^{-c(j)-1}, 2^{-c(j)}]$ and $|P_{Q_j}| \in [2^{d(j)}, 2^{d(j)+1}]$. For Rectangle cases, we define integers $a(j), b(j), d(j)$ such that $a_j \in [2^{-a(j)-1}, 2^{-a(j)}]$,  $b_j \in [2^{-b(j)-1}, 2^{-b(j)}]$ and $|P_{R_j}| \in [2^{d(j)}, 2^{d(j)+1}]$.
    
    Since the sets $P_{Q_j}$ and $P_{R_j}$ together cover at least $n/2$ points of $P$, we can either
    \begin{itemize}
        \item find some $c_0, d_0$ such that the sets $P_{Q_j}$ with $C_j = Q$ and $c(j)=c_0$, $d(j) = d_0$ cover $\gtrsim n/\log^2 n$ points of $P$, or
        \item find some $a_0, b_0, d_0$ such that the sets $P_{R_j}$ with $C_j = R$ and $a(j)=a_0$, $b(j) = b_0$, $d(j) = d_0$ cover $\gtrsim n/\log^3 n$ points of $P$,
    \end{itemize}
    
    Consider the Square case first. Denote $S \subset \{1, \ldots, t\}$ the set of corresponding indices $j$. The first condition of Lemma \ref{lem:several_step_prep_new} is clearly satisfied with $n_1 = 2^{d_0}$, $u = 2^{-c_0-1}$, and note that $P' := \bigcup_{j\in S} P_{Q_j}$ satisfies $|P'| \gtrsim \frac{n}{(\log n)^2}$. 
    The second condition follows from the corresponding condition in Lemma \ref{lem:single_step_prep_new}. Let $j_0$ be the minimum element of $S$; by design $P' \subset P^{(j_0)}$, and so the third condition also follows from Lemma \ref{lem:single_step_prep_new}. The lower bound $u^{-s_1} n_1 \gtrsim u_0^{2-s} n$ follows from the fact that $|P^{(j_0)}| > n/2$.
    
Like in \S\ref{KPSsection}, we would also like to make sure that the squares $Q_j$ we ultimately select are pairwise disjoint. Define a graph $G$ on the set $S$ where $i$ and $j$ are connected by an edge if squares $Q_i$ and $Q_j$ intersect. Note that for fixed $i \in S$ all squares $Q_j$ with $(i, j) \in G$ are contained in a square with side $10 u$ around $Q_i$. So by the third condition, the union of sets $P_{Q_j}$ over $(i, j) \in G$ for fixed $i$ has size at most $\lesssim 10^{s_1} n_1$. So since sets $P_{Q_j}$ are pairwise disjoint and have size at least $n_1$, we conclude that the degree of $i$ in $G$ is bounded by some absolute constant $D \lesssim 10^{s_1}$. Like before, this means that the chromatic number of $G$ satisfies $\chi(G) \leq D+1$, and so there must exist an independent set $S' \subset S$ of size 
    $$|S'| \geq \frac{|S|}{\chi(G)} \geq \frac{|S|}{D+1}.$$
    The family of squares $\mathcal Q = \{Q_j,~ j\in S'\}$ satisfies all conditions of the Square case of the lemma. 

    Now consider the Rectangle case. Denote $S \subset \{1, \ldots, t\}$ the set of corresponding indices $j$. The first condition of Lemma \ref{lem:several_step_prep_new} is clearly satisfied with $n_1 = 2^{d_0}$, $a = 2^{-a_0-1}$, $b = 2^{-b_0-1}$ and a poly-logarithmic error instead of $n^{-2\varepsilon'}$. The second condition follows from the corresponding condition in Lemma \ref{lem:single_step_prep_new}. Let $j_0$ be the minimum element of $S$, then we have $P' = \bigcup_{j\in S} P_{Q_j} \subset P^{(j_0)}$ and so the third condition also follows from Lemma \ref{lem:single_step_prep_new}. The lower bound $ n_1 \gtrsim b^{s_2} u_1^{s_1-s_2} u_0^{2-s_1} n^{1-\varepsilon'}$ follows from the fact that $|P^{(j_0)}| > n/2$.

    Finally, we would like to make sure that in this case we are also able to select a large sub-family of rectangles which are pairwise disjoint. Define a graph $G$ on the set $S$ where $i$ and $j$ are connected by an edge if rectangles $R_i$ and $R_j$ intersect. Note that for fixed $i \in S$ all rectangles $R_j$ with $(i, j) \in G$ are contained in a square with side $10 b$ around $R_i$. So by the third condition, the union of sets $P_{R_j}$ over $(i, j) \in G$ for fixed $i$ has size at most $\lesssim n^{\varepsilon'} n_1$. So since sets $P_{R_j}$ are pairwise disjoint and have size at least $n_1$, we conclude that the degree of $i$ in $G$ is bounded by $Dn^{\varepsilon'}$ for some absolute constant $D$. This time the maximum degree of $G$ is not uniformly bounded, but by the same reasoning we are still able to select an independent set $S' \subset S$ of size $\ge |S|/Dn^{\varepsilon'}$. The corresponding family of rectangles $\mathcal R = \{R_j,~ j\in S'\}$ then satisfies all conditions of the Rectangle case of the lemma.
\end{proof}

The proof of Theorem \ref{thm:main_theorem} naturally splits into two parts depending on which of the cases of Lemma \ref{lem:several_step_prep_new} holds.

\begin{proof}[Square case:]  
    Suppose that $\mc Q$ is a family of squares satisfying the properties in the first case of Lemma \ref{lem:several_step_prep_new} with parameters $u, n_1$ and sets $P_{Q_j} \subset P\cap Q_j$. In particular, $P_{Q_j}$ is $s_1$-regular relative to $Q_j$ above scale $\delta \sim n^{-\varepsilon'} u_1/u$.

    Denote $\sigma = 1-\varepsilon$ and, for each square $Q_j$, apply Theorem \ref{marstrand} to the set $P_{Q_j}$ with parameters $s_1, \sigma$ and $C=1$. We get a set of lines $L_j$ spanned by pairs of points in $P_{Q_j}$ such that $|L_j| \ge K^{-1} |P_{Q_j}|^2$ and the set of directions $\theta(L_j) = \{\theta(\ell),~\ell \in L_j\} \subset S^1$ is $\sigma$-regular above scale $\delta$ with constant $K=K(1,s_1, \sigma)$. In particular, any interval $I \subset S^1$ of length $\delta$ contains at most $K \delta^\sigma |L_{j}|$ elements of $\theta(L_{j})$. 

    For each $j$, let $p_j \in Q_j$ be the middle point of the square $Q_j$ and note that all lines in $L_{j}$ are at distance at most $10 u$ to $p_j$. 
    Let $w = \max\{ C u, n^{\varepsilon'} \frac{n_1 u_1}{u^2 n}\}$ where $C$ is an absolute constant consistent with the third condition from Lemma \ref{lem:several_step_prep_new}. Note that any $w$-square contains $\lesssim (w/u)^s$ points of $\Pi = \{p_j\}$. 
    By Proposition \ref{prop:incidence_lower_bound_sep_weighted}, if  
    \begin{equation}\label{ineqmu2}
    \mu_\Pi \mu_L^2 \le w^2 \eta n^{-\varepsilon}    
    \end{equation}
    holds with $\mu_\Pi = (w/u)^s / |\Pi|$, $\mu_L = K\delta^{\sigma}$ and $\eta = \delta$, then we have $B(w/10; \Pi, L) \gtrsim n^{-\varepsilon} \eta \mu_L^{-1}$, where $L = \bigcup L_j$ (in order to apply Proposition \ref{prop:incidence_lower_bound_sep_weighted}, we shift again the lines in $L_j$ so that they pass through $p_j$, as in Remark \ref{shift}). Using 
    $$
    \delta \lesssim n^{-\varepsilon'} u_1/u, ~~ |\Pi| \gtrsim \frac{n}{n_1 \log^{2}n}, ~~ s = 1+\varepsilon, 
    $$
    one can easily check that (\ref{ineqmu2}) holds for any $w \gtrsim n^{\varepsilon'} \frac{n_1 u_1}{u^2 n}$.
    Thus, for our choice of $w$ we get
    $$
    B(w/10; \Pi, L) \gtrsim \eta \mu_L^{-1} \ge n^{-6\varepsilon}.
    $$
    Since each point $p_i \in P'$ has at least $n_1$ points of $P$ in its $u$-neighbourhood, we conclude that 
    $$
    B(w; P, L) = \frac{I(w; P, L)}{w |P| |L|} \ge \frac{n_1 I(w/10; \Pi, L)}{w |P| |L|} \gtrsim B(w/10;\Pi, L) / \log^2n \gtrsim n^{-7\varepsilon}.
    $$
    Apply Lemma \ref{lem:low_scales} to the set of points $P$, the set of lines $L$, $w = w$, $u = u$ and $\kappa = n^{-7\varepsilon}$. We have 
    $$
    I(w; P, L) \ge n^{-7\varepsilon} w |P| |L| \gtrsim |L|,
    $$
    $$
    |L| = \sum_{j} |L_{j}| \gtrsim n_1 n \log^{-2}n.
    $$
    By Lemma \ref{lem:several_step_prep_87}, we have $u^{-s} n_1 \gtrsim u_0^{2-s} n$. Using $u \ge u_1$ we conclude that $n_1 \ge n^\varepsilon$ provided that $\varepsilon' \ggg \varepsilon$ and $u_0 u_1 \gtrsim n^{\varepsilon'-1}$. 
    It follows that $|L| \gtrsim |P|$ and so one can apply Lemma \ref{lem:low_scales} and get:
    $$
    \Delta/|\log \Delta| \lesssim \max\{u, u^{2/3} w^{1/3}\} |P|^{-1/3} |L|^{-1/3} \kappa^{-2/3} + u |P|^{-1} \kappa^{-1}.
    $$
    If the second term dominates the right hand side then we get 
    \begin{equation*}\label{delta2nd}
    \Delta \lesssim n^{8\varepsilon} u |P|^{-1} \le u_0 n^{-1+8\varepsilon}.
    \end{equation*}
    Now suppose that the first term is dominating. Using $|L| \gtrsim n_1 n^{1-\varepsilon}$, $w = \max\{Cu, n^{\varepsilon'}\frac{n_1 u_1}{u^2 n}\}$ and some simplifications we get
    $$
    \Delta \lesssim n^{\varepsilon''} \max\{u, (n_1u_1/n)^{1/3}\} n^{-2/3} n_1^{-1/3} \lesssim n^{\varepsilon''} (u  n^{-2/3} n_1^{-1/3} + u_1^{1/3} n^{-1}),
    $$
    we have $n_1 \gtrsim u^s u_0^{2-s} n \gtrsim n^{-\varepsilon} u u_0 n$ and $u_1 \le u \le u_0$ and so we conclude
    \begin{equation}\label{deltaineq1}
    \Delta \lesssim  u_0^{1/3} n^{2\varepsilon''-1}.
    \end{equation}
    This completes the analysis of the first case. Note that the only condition needed to arrive at (\ref{deltaineq1}) was the assumption that $u_0 u_1 \gtrsim n^{\varepsilon'-1}$. In particular, note that this condition is satisfied for (\ref{choice}).
\end{proof}

\begin{proof}[Rectangle case:]
    Suppose that $\mc R$ is a family of rectangles satisfying the properties in the second case of Lemma \ref{lem:several_step_prep_new} with parameters $a \le b\le u_1$, 
 $n_1$ and sets $P_{R_j} \subset P\cap R_j$. In particular, $P_{R_j} \subset R_j$ is $s_2$-regular relative to $R_j$ at all scales with constant $2$ and for any strip $T$ of width $w \ge \Delta/\operatorname{Area}(R_j)$ relative to $R$ we have $|P_{R_j}\cap T| \lesssim w^\varepsilon |P_{R_j}|$.

    Denote $\sigma = s_2-\varepsilon$. 
    Let $w = c\tau(s, \sigma)^{1/\varepsilon} \gtrsim_{\varepsilon} 1$, where $\tau$ is the function from Theorem \ref{thm:finite_set_dir_set}. For an appropriate choice of a constant $c$, any strip $T$ of width $w$ relative to $R_j$ contains at most $\tau(s, \sigma) |P_{R_j}|$ points of $P_{R_j}$. 
    Let $\psi_j: [0, 1]^2 \rightarrow R_j$ be an affine isomorphism, then the set $\psi_j^{-1}(P_{R_j})$ satisfies the conditions of Theorem \ref{thm:finite_set_dir_set} with parameters $s_2, \sigma, w$ and $C=2$. So there is a set of lines $L'_j$ spanned by pairs of points in $\psi_j^{-1}(P_{R_j})$ such that $|L'_j| \ge K^{-1} |P_{R_j}|^2$ and the set of directions $\theta(L'_j) = \{\theta(\ell),~\ell \in L'_j\}\subset S^1$ is $\sigma$-regular above scale $\delta' \sim |P_{R_j}|^{-1/s_2}$ and with constant 
    $K = K(2,w, s_2, \sigma) \lesssim_\varepsilon 1$.

    In particular, any interval $I \subset S^1$ of length $\delta'$ contains at most $K (\delta')^\sigma |L'_{j}|$ elements of $\theta(L'_{j})$. 
    Define $L_j = \psi_j(L_j')$ and note that $L_j$ is a set of lines spanned by pairs of points in $P_{R_j}$. Since the rectangle $R_j$ has sides roughly $a \times b$ (up to a factor of 2), the map $\psi_j$ shrinks angles by a factor of at most $C b/a$. Thus, if $I \subset S^1$ is an interval of length $\lesssim a \delta' / b$ then its preimage $\psi^{-1}_j(I) \subset S^1$ has length at most $\delta'$. It follows that any interval $I\subset S^1$ of length $\delta \sim a \delta' / b$ contains at most $\mu_L |L_j|$ elements of $\theta(L_j)$ with $\mu_L = K (\delta')^\sigma$.
    
    For each $j$, let $p_j \in R_j$ be the middle point of the rectangle $R_j$ and note that all lines in $L_{j}$ are at distance at most $10 b$ to $p_j$. Let $\Pi = \{p_j\}$.
    Let $w \ge Cb$ and note that by Lemma \ref{lem:several_step_prep_new} any $w$-square $Q'$ satisfies 
    \begin{equation}\label{pprime}
    |\Pi \cap Q'| \lesssim M(w) := \begin{cases}
        (w/b)^{s_2}, ~~~ &Cb \le w \le u_1,\\
        (u_1/b)^{s_2} (w/u_1)^{s_1}, ~~~ &u_1 \le w \le u_0,\\
        (u_1/b)^{s_2} (u_0/u_1)^{s_1} (w/u_0)^2, ~~~ &w \ge u_0,
    \end{cases}    
    \end{equation}
    where the last row follows from the second by tiling $Q$ with $u_0\times u_0$ squares. 

    The following inequalities will be used extensively later on in the argument:
    \begin{equation}\label{abn}
        b \le u_1, ~~~ a \gtrsim \Delta n_1^{\gamma_0 -\varepsilon}/b, ~~~ n_1 \gtrsim b^{s_2} u_1^{s_1-s_2}u_0^{2-s_1} n^{1-\varepsilon'}.
    \end{equation}
    Recall that in Lemma \ref{lem:several_step_prep_new} we have $b \le u_1$. 
    By the assumption, we know that $\Delta(n_1) \lesssim n_1^{\varepsilon - \gamma_0}$ holds. Since $P_{R_j}$ is a set of size at least $n_1$ contained in a rectangle of area $ab$ and $P_{R_j}$ does not contain triangles of area at most $\Delta$, we thus have 
    $$
    ab n_1^{\varepsilon - \gamma_0} \gtrsim \Delta.
    $$
    We can use this to deduce a lower bound $a \gtrsim \Delta n_1^{\gamma_0-\varepsilon} / b$. By the lower bound from Lemma \ref{lem:several_step_prep_new} we have $n_1 \gtrsim b^{s_2}u_1^{s_1-s_2} u_0^{2-s_1} n^{1-\varepsilon'}$.
    
    By Proposition \ref{prop:incidence_lower_bound_sep_weighted} if the following inequality is  satisfied:
    \begin{equation}\label{ineqmu3}
        \mu_\Pi \mu_L^2 \le w^2 \eta n^{-\varepsilon}
    \end{equation}
    with $\mu_L = K (\delta')^{\sigma}$, $\eta = \delta \sim a\delta'/b$, $\delta' \sim n_1^{-1/s_2}$ and $\mu_\Pi = M(w)/|\Pi|$ then we have $B(w/10; \Pi, L) \gtrsim n^{-\varepsilon} \eta \mu_L^{-1}$, where $L = \bigcup L_j$.

    After substitution, (\ref{ineqmu3}) reduces to the following condition on $w$:
    \begin{equation}\label{wmess}
    n^{-1} n_1^{1/s_2 - 1} (b/a) M(w) \lesssim n^{-\varepsilon'} w^2.
    \end{equation}
    Note that the function $M(w) w^{-2}$ is increasing in $w$ and constant for $w \ge u_0$.
    Suppose that (\ref{wmess}) is not satisfied for $w=u_0$. We then get
    $$
    n^{\varepsilon'-1} n_1^{1/s_2-1} (b/a) \gtrsim u_0^{2} M(u_0)^{-1} = u_0^{2} (b/u_1)^{s_2} (u_1/u_0)^{s_1}
    $$
    $$
    a \lesssim n^{2\varepsilon'-1} b^{1-s_2} u_1^{s_2-1} u_0^{-1} n_1^{1/s_2-1}
    $$
    combining with the lower bound $a \gtrsim \Delta n_1^{\gamma_0-\varepsilon}/b$ (\ref{abn}) gives
    $$
    \Delta n_1^{\gamma_0-\varepsilon}/b \lesssim n^{2\varepsilon'-1} b^{1-s_2} u_1^{s_2-1} u_0^{-1} n_1^{1/s_2-1}
    $$
    $$
    \Delta n^{1-3\varepsilon'} \lesssim b^{2-s_2} u_1^{s_2-1} u_0^{-1} n_1^{1/s_2-1-\gamma_0}
    $$
    using the lower bound $n_1 \gtrsim b^{s_2}u_1^{s_1-s_2} u_0^{2-s_1} n$ and $s_1=1+\varepsilon$ we get
    $$
    \Delta n^{1-4\varepsilon'} \lesssim b^{2-s_2} u_1^{s_2-1} u_0^{-1} (b^{s_2}u_1^{1-s_2} u_0 n)^{1/s_2-1-\gamma_0}
    $$
    the exponent of $b$ above is equal to $3-2s_2 -s_2 \gamma_0$. Let us choose $s_2$ so that $3-2s_2 -s_2\gamma_0 \ge 0$ then we can use the upper bound $b \le u_1$ to conclude:
    \begin{equation}\label{delineq1}
    \Delta n^{1-4\varepsilon'} \lesssim u_1 u_0^{-1} (u_1 u_0 n)^{1/s_2-1-\gamma_0}    
    \end{equation}

    Now we consider the case when (\ref{wmess}) is satisfied for $w=u_0$. Let us pick the smallest $w \in [C b, u_0]$ which satisfies (\ref{wmess}). 
    Then (\ref{ineqmu3}) is satisfied for this choice of $w$ and we conclude that by Proposition \ref{prop:incidence_lower_bound_sep_weighted}
    $$
    B(w; \Pi, L) \gtrsim n^{-\varepsilon} \eta \mu_L^{-1} \sim n^{-\varepsilon} (a/b) n_1^{1-1/s_2}. 
    $$
    So since $|\Pi| \gtrsim n^{1-2\varepsilon'}/n_1$ and $b$-neighbourhood of any point in $P'$ contains at least $n_1$ points of $P$, we conclude that
    \begin{equation}\label{blower}
    B(w; P, L) \gtrsim n^{-3\varepsilon'} (a/b) n_1^{1-1/s_2}.    
    \end{equation}
    In order to apply Lemma \ref{lem:low_scales}, we need to check that $I(w;P, L) \ge C |L| \ge C^2 |P|$ for a large enough constant $C$. For the first inequality, we note that $ab/n_1 \ge \Delta$ since $P_{R_j}$ contains no triangles of area at most $\Delta$ and we have $b \le u_1$, $s_2 >0.5$, so
    \begin{equation*}\label{kappa}
    B(w; P, L) \gtrsim n^{-3\varepsilon'} a b^{-1} n_1^{1-1/s_2} \ge n^{-3\varepsilon'} \Delta b^{-2} \ge n^{-3\varepsilon'} \Delta u_1^{-2}, 
    \end{equation*}
    Suppose that $I(w; P, L) < C |L|$ then we get 
    $$
    \Delta u_1^{-2} < w^{-1} n^{-1+\varepsilon''} \le b^{-1} n^{-1+\varepsilon''} \le \Delta^{-1/2} n^{-1+\varepsilon''}
    $$
    by the assumption that $b \ge \Delta^{1/2}$. If $u_1 < n^{-3/8}$ then this inequality implies $\Delta \le n^{\varepsilon''' - 7/6}$, a much better bound than what we are aiming at. So as long as $u_1 < n^{-3/8}$ holds, we have the condition $I(w; P, L) \ge C |L|$.

    We have $|L| \sim n_1^2 |\Pi| \gtrsim n_1 n^{1-2\varepsilon'}$ and so to ensure that $|L| \ge C |P|$ it is enough to check that $n_1 \gtrsim n^{\varepsilon''}$. 
    By the last condition of Lemma \ref{lem:several_step_prep_new} we have
    $$
    n_1 \gtrsim b^{s_2} u_1^{s_1-s_2} u_0^{2-s_1} n \gtrsim n^{-2\varepsilon} b^{s_2} u_1^{1-s_2} u_0 n.
    $$
    If $n_1 \le n^{\varepsilon''}$ then using $b^2 \ge ab\ge \Delta$ this inequality gives an upper bound on $\Delta$:
    \begin{equation}\label{nsmall}
        \Delta^{s_2/2} \lesssim n^{2\varepsilon''} n^{-1} u_0^{-1} u_1^{s_2-1}.
    \end{equation}
    By optimizing parameters we will ensure that this bound leads to the desired bound on $\Delta$. So we may assume that $n_1\gtrsim n^{\varepsilon''}$ and in particular $|L| \ge C |P|$. Thus, we can apply Lemma \ref{lem:low_scales} to the set of points $P$ and the set of lines $L$ with $u = b$, $w = w$ and $\kappa = n^{-3\varepsilon'} (a/b) n_1^{1-1/s_2}$ (see (\ref{blower})). We conclude that
    \begin{equation}\label{deltaineq}
    \Delta/|\log \Delta| \lesssim \max\{b, b^{2/3} w^{1/3}\} |P|^{-1/3} |L|^{-1/3} \kappa^{-2/3} + b |P|^{-1} \kappa^{-1}.    
    \end{equation}
    
    First, let us check that the second term is negligible. Indeed, if it is not, then we obtain a very good upper bound on $\Delta$:
    $$
    \Delta/|\log \Delta| \lesssim b |P|^{-1} \kappa^{-1} \lesssim n^{-1} b^2 n_1^{1/s_2-1} / a \le n^{3\varepsilon'-1} b^2 n_1 /a
    $$
    using $n_1/a \le b/\Delta$ this gives $\Delta^2 \lesssim n^{4\varepsilon'-1} b^3 \le n^{4\varepsilon'-1} u_1^3$, which is a good enough bound, provided that, say, $u_1 \le n^{-4/9}$ (which is consistent with (\ref{choice})). Thus, we may focus on the case when the first term in (\ref{deltaineq}) is dominating. Plugging in the values for $|P|, |L| \sim n_1^2 |\Pi|, \kappa$ gives:
    $$
    \Delta \lesssim n^{\varepsilon''} \max\{b^{1/3}, w^{1/3}\} b^{4/3} a^{-2/3}  n^{-2/3} n_1^{\frac{2}{3s_2} -1}
    $$
    \begin{equation}\label{deltaineq4}
    \Delta^3 < n^{3\varepsilon''} w b^{4} a^{-2}  n^{-2} n_1^{\frac{2}{s_2} -3}     
    \end{equation}
    Recall that $w$ was defined as the smallest number greater than $Cb$ satisfying (\ref{wmess}). Define
    \begin{equation}\label{wstar}
    w_* = n^{2-4\varepsilon''} \Delta^3 a^2 b^{-4} n_1^{3-\frac{2}{s_2}},    
    \end{equation}
    then by (\ref{deltaineq4}) we have $w_* < w$ and so by the choice of $w$ we have either $w_* \le Cb$ or $w_*$ does not satisfy (\ref{wmess}). 

    If $w_* \le C u_1$ then
    $$
    w_*= n^{2-4\varepsilon''} \Delta^3 a^2 b^{-4} n_1^{3-\frac{2}{s_2}} \lesssim u_1,
    $$
    using $a \gtrsim \Delta n_1^{\gamma_0-\varepsilon}/b$ gives
    $$
    \Delta^5 n^{2-5\varepsilon''} \lesssim b^6 u_1 n_1^{\frac{2}{s_2}-3-2\gamma_0},
    $$
    using the lower bound $n_1 \gtrsim b^{s_2}u_1^{s_1-s_2} u_0^{2-s_1} n$ and $s_1=1+\varepsilon$ we get
    $$
    \Delta^5 n^{2-6\varepsilon''} \lesssim b^6 u_1 (b^{s_2}u_1^{s_1-s_2} u_0^{2-s_1} n)^{\frac{2}{s_2}-3-2\gamma_0}
    $$
    the exponent of $b$ above is equal to $8-3s_2-2s_2 \gamma_0 > 0$, so we can use the upper bound $b \le u_1$ to conclude:
    \begin{equation}\label{delineq3}
    \Delta^5 n^{2-7\varepsilon''} \lesssim u_1^7 (u_1 u_0 n)^{\frac{2}{s_2}-3-2\gamma_0}.
    \end{equation}    
    
    Otherwise, we have $w_* \ge Cu_1 \ge Cb$, so the only remaining case is when $w_* \in [Cu_1, u_0]$ does not satisfy (\ref{wmess}). That is, we have
    \begin{equation}\label{wstar2}
    n^{\varepsilon'-1} n_1^{1/s_2-1} (b/a) M(w_*) \gtrsim w_*^2    
    \end{equation}
    where the function $M$ is given by (\ref{pprime}).
    We have
    $$
    n^{1-\varepsilon'} n_1^{1-1/s_2} (a/b) \lesssim w_*^{-2} M(w_*) = w_*^{-2} (u_1/b)^{s_2} (w_* / u_1)^{s_1}
    $$
    $$
    a \lesssim   n^{2\varepsilon'-1} b^{1-s_2} u_1^{s_2-1} w_*^{-1} n_1^{1/s_2-1}
    $$
    we have $w_* = n^{2-4\varepsilon''} \Delta^3 a^2 b^{-4} n_1^{3-\frac{2}{s_2}}$ and so we get
    $$
    a^3 \Delta^3 \lesssim n^{5\varepsilon''-3} b^{5-s_2} u_1^{s_2-1} n_1^{\frac{3}{s_2}-4}
    $$
    combining with the lower bound $a \gtrsim \Delta n_1^{\gamma_0-\varepsilon}/b$ gives
    $$
    \Delta^6 n_1^{3\gamma_0-3\varepsilon} b^{-3} \lesssim n^{5\varepsilon''-3} b^{5-s_2} u_1^{s_2-1} n_1^{\frac{3}{s_2}-4}
    $$
    $$
    \Delta^6 n^{3-6\varepsilon''} \lesssim b^{8-s_2} u_1^{s_2-1} n_1^{\frac{3}{s_2}-4-3\gamma_0}
    $$
    using the lower bound $n_1 \gtrsim b^{s_2}u_1^{s_1-s_2} u_0^{2-s_1} n$ and $s_1=1+\varepsilon$ we get
    $$
    \Delta^6 n^{3-7\varepsilon''} \lesssim  b^{8-s_2} u_1^{s_2-1} (b^{s_2}u_1^{s_1-s_2} u_0^{2-s_1} n)^{\frac{3}{s_2}-4-3\gamma_0}
    $$
    the exponent of $b$ above is equal to $11-5s_2-3s_2 \gamma_0 > 0$, so we can use the upper bound $b \le u_1$ to conclude:
    \begin{equation}\label{delineq2}
    \Delta^6 n^{3-7\varepsilon''} \lesssim u_1^{7} (u_1 u_0 n)^{\frac{3}{s_2}-4-3\gamma_0}.
    \end{equation}
    
    We considered all possible cases and in each of them arrived at an upper bound on $\Delta$. It remains to choose parameters $u_1, u_0, s_2, \gamma_0$ and verify that the claimed bound on $\Delta$ follows.
\end{proof}

\begin{proof}[Choosing parameters]
    In the arguments above, we worked with unspecified parameters
    $$
    u_0, ~~ u_1, ~~ s_2, ~~\gamma_0
    $$
    for which we assumed the following properties:
    $$
    0.5 < s_2 < 1, ~~ s_2(2+\gamma_0) \le 3, ~~ u_1 < u_0, ~~ u_0 u_1 \gtrsim n^{\varepsilon' -1}, ~~ \Delta^{1/2} \lesssim u_1 \lesssim n^{-4/9}.
    $$
    Using these assumptions, we showed that one of the following bounds must hold (cf. (\ref{deltaineq1}), (\ref{delineq1}), (\ref{nsmall}), (\ref{delineq3}), (\ref{delineq2})): 
    \begin{align*}
        \Delta n^{1-2\varepsilon''} &\lesssim  u_0^{1/3},\\
        \Delta n^{1-4\varepsilon'} &\lesssim u_1 u_0^{-1} (u_1 u_0 n)^{1/s_2-1-\gamma_0},\\
        \Delta^{s_2/2} n^{1-2\varepsilon''} &\lesssim u_0^{-1} u_1^{s_2-1},\\
        \Delta^5 n^{2-7\varepsilon''} &\lesssim u_1^7 (u_1 u_0 n)^{\frac{2}{s_2}-3-2\gamma_0},\\
        \Delta^6 n^{3-7\varepsilon''} &\lesssim u_1^{7} (u_1 u_0 n)^{\frac{3}{s_2}-4-3\gamma_0}.
    \end{align*}
    Denote $\ell = \gamma_0 -1/s_2$.
    Denote $\Delta = n^{-\gamma_1 +\varepsilon'''}$ and $u_0 = n^{-\alpha}$, $u_1 = n^{-\beta}$, then after taking logs and using the fact that $\varepsilon'''$ is much larger than all other epsilons, the inequalities on $\Delta$ rewrite as follows:
    \begin{align*}
        3\gamma_1 &\ge 3+\alpha,\\
        \gamma_1 &\ge 2 - 2\alpha+ \ell (1-\alpha-\beta),\\
        \gamma_1 &\ge 2\beta + \frac{2}{s_2}(1-\alpha-\beta),\\
        5\gamma_1 &\ge 5 -3\alpha + 4\beta + 2\ell(1-\alpha-\beta),\\
        6\gamma_1 &\ge 7 -4\alpha + 3\beta + 3\ell(1-\alpha-\beta).
    \end{align*}
    where we can choose any $\alpha, \beta$ such that:
    $$
    0\le \alpha \le \beta, ~~ \alpha+\beta \le 1, ~~ 4/9 \le \beta \le \gamma_1/2
    $$
    and $s_2$ such that $0.5 < s_2 < 1$ and $s_2 \le \frac{3}{2+\gamma_0}$.

    Let $\gamma_0 = 8/7$, $s_2 = \frac{3}{2+\gamma_0} = \frac{21}{22}$  (so that $\ell = \frac{2}{21}$). For some $0 \le a \le b$, let $\alpha = \frac{3}{7}+a$ and $\beta = \frac{4}{7}-b$ (so that $1-\alpha-\beta = b-a$). Plugging these values into the system above gives:
    \begin{align*}
        3\gamma_1 &\ge 3 \cdot \frac{8}{7}+a,\\
        \gamma_1 &\ge \frac{8}{7} - 2a + \frac{2}{21}(b-a), \\
        \gamma_1 &\ge \frac{8}{7} - 2b + \frac{44}{21}(b-a),\\
        5\gamma_1 &\ge 6 - 3a - 4 b +\frac{4}{21}(b-a),\\
        6\gamma_1 &\ge 7 - 4a - 3 b + \frac{2}{7}(b-a).
    \end{align*}
    Observe that each of the first 3 inequalities above implies $\gamma_1 \ge 8/7+c$ if we take $a>0$ and $b$ significantly larger than $a$.
    On the other hand, the last two inequalities both imply a bound exceeding $8/7$ provided that both $a$ and $b$ are sufficiently small. So by choosing $1\ggg b \ggg a > 0$ we get a small improvement of the exponent. 
    
    Specifically, let us take $a = 0.0016$, $b = 0.047$. One can check that each of the inequalities above implies that $\gamma_1 \ge \frac{8}{7} + 0.0005 + c$ for some $c >0$. Taking $\varepsilon''' < c$ and all other epsilons correspondingly then implies the bound $\Delta < n^{-8/7-1/2000}$. 
    This completes the proof.
\end{proof}

\subsection{A further improved bound for homogeneous sets}

In this section we prove Theorem \ref{separated}. We recall that a homogeneous set $P \subset [0,1]^2$ is a well-spaced set, in the sense that there exists a constant $C > 0$ such that in every axis-parallel $n^{-1/2} \times n^{-1/2}$ square $Q$ contains at most $C$ points from $P$. We would like to show that for every $\varepsilon > 0$, there exist three points forming a triangle of area at most $$\Delta \leq n^{-7/6+\varepsilon}.$$

The proof will follow our approach from the previous sections, but we will now be able to use a much simpler network of pairwise disjoint squares, with easier access to the well-separated lines needed for the application of Proposition \ref{prop:incidence_lower_bound_sep_weighted}.   

\begin{proof}[Proof of Theorem \ref{separated}]
    Let $P \subset [0, 1]^2$ be as in the statement. Note that for $w \ge n^{-1/2}$, any $w\times w$ square $Q \subset [0, 1]^2$ can be covered by $\le 4 n w^2$ squares with side $n^{-1/2}$, so by assumption and a simple union bound we get that 
    \begin{equation} \label{hom}
    |P \cap Q| \le 4C n w^2.
    \end{equation}
    Let $u= n^{-1/2+2\varepsilon}$ and split the unit square $[0, 1]^2$ into $u\times u$ squares $Q_j$. Let $\mathcal Q$ be the set of squares $Q_j$ such that $|P \cap Q_j| \ge u^2 n/2$. Note that the squares from $\mathcal Q$ cover at least $n/2$ points of $P$. In particular, by (\ref{hom}) we have 
    $$
    u^{-2} \gtrsim \frac{n}{nu^2/2} \ge |\mathcal Q| \ge \frac{n/2}{4Cn u^2} \gtrsim u^{-2},
    $$ 
    i.e. $|\mathcal Q| \sim u^{-2}$.
    
    For each $Q_j \in \mathcal Q$, let $P_{Q_j} = P\cap Q_j$. By \eqref{hom}, note that the set $P_{Q_j}$ is $2$-regular relative to $Q_j$ at all scales with constant $8 C$. Indeed, for any $w \times w$ square $Q$, we have that
    $$|P_{Q_j} \cap Q| \leq 4Cnw^2 \leq 8 \cdot \frac{nu^2}{2} \cdot \left(\frac{w}{u}\right)^2 \leq 8C | P \cap Q_j| \left(\frac{w}{u}\right)^2.$$
    By Theorem \ref{marstrand} applied to $P_{Q_j}$ with $s=2$, $\sigma=1-\varepsilon$ and $C = 8C$, there is a set of lines $L_j$ spanned by pairs of points in $P_{Q_j}$ such that $|L_j| \ge |P_{Q_j}|^2/K$ and the set of directions $\theta(L_j) \subset S^1$ is $\sigma$-regular all scales above $\delta \sim |P_{Q_j}|^{-1/2}$ with constant $K = K(2, \sigma, 8C)$. In particular, every interval $I\subset S^1$ of length $\delta$ contains at most $K \delta^\sigma |L_j|$ directions from $\theta(L_j)$. 

    For each $Q_j \in \mathcal Q$ let $p_j$ be the middle point of the square $Q_j$ and note that all lines in $L_j$ are at distance at most $10u$ to $p_j$. 
    Let $w = 10 u$, and define $\Pi = \bigcup_{j: Q_{j} \in \mathcal{Q}} \{p_j\}$. Also, let $L = \bigcup_{j: Q_{j} \in \mathcal{Q}} L_j$. 
    
    By design, note that every $w$-square contains at most $100$ points of $\Pi$, so take $\mu_\Pi = 100/|\Pi| \sim u^{2}$, $\eta = \delta$, and $\mu_L = K \delta^{\sigma}$. We have
    $$
    \mu_\Pi \mu_L^2 \eta^{-1} \sim u^2 \delta^{2\sigma - 1} \lesssim u^2 \delta^{1/2} \le w^2 \delta^{1/2}.
    $$
    Since $\delta \sim |P_{Q_j}|^{-1/2}$ is upper bounded by a small negative power of $n$, this implies that Proposition \ref{prop:incidence_lower_bound_sep_weighted} is applicable to the set of points $\Pi$, the set of lines $L$, and scale $w$. Thus, we get $B(w; \Pi, L) \gtrsim n^{-\varepsilon} \eta \mu_L^{-1} \gtrsim n^{-2\varepsilon}$. Since every square $Q_j \in \mathcal Q$ contains $\ge u^2 n/2 \sim \frac{|P|}{|\Pi|}$ points of $P$, we get
    $$
    B(10w; P, L) \gtrsim B(w; \Pi, L) \gtrsim n^{-2\varepsilon}.
    $$
    Apply Lemma \ref{lem:low_scales} to the set $P$, the set of lines $L$ and $w=  10w = 100 u$, $\kappa = n^{-2\varepsilon}$. Then the smallest area $\Delta$ of a triangle in $P$ satisfies:
    $$
    \Delta / |\log \Delta| \lesssim u |P|^{-1/3} |L|^{-1/3} \kappa^{-2/3} + u |P|^{-1} \kappa^{-1},
    $$
    we have $|P| = n$, $u = n^{-1/2+2\varepsilon}$ and $|L| \gtrsim n^2 u^2 \sim n^{1+4\varepsilon}$, so after simplifications we get $\Delta \lesssim n^{-7/6 + 5\varepsilon}$, as desired.
\end{proof}

Note that $u \approx n^{-1/2}$ is the smallest scale at which we can ensure pairs of points of distance $\leq u$. If $L$ denotes the set of lines spanned by the pairs of points in $P$ which lie at distance at most $u$, then $|L| \approx n$, and recall from the discussion in \S\ref{sec:inductive_step_high_low} that the smallest scale that can be reached by the high-low method is $(|P||L|)^{-1/3} \approx n^{-2/3}$. Here and throughout this final discussion $\approx$ means equality up to small powers of $n^{-\varepsilon}$. Theorem \ref{separated} shows that this benchmark can be achieved for the Heilbronn triangle problem for sets of points $P \subset [0,1]^{2}$ which are homogeneous. It is also worth contrasting this story with Schmidt’s method, which we outlined in \S\ref{sec:Incidences}. The Riesz energy estimate from \eqref{Riesz} gives a better bound when the points are more concentrated (and the Riesz $2$-energy has a logarithmic divergence even if the points are maximally spread out), while Roth’s method and our approach are on the other hand most effective when the point set is well-distributed in the unit square (as showcased by Theorem \ref{separated}). We would like to add that it is also conceivable that for the points $P$ and lines $L$ involved in the Heilbronn triangle problem analysis, one could continue to have a nontrivial number of incidences all the way down to scale $\approx n^{-1}$, rather than $\approx n^{-2/3}$. This would lead to an estimate for $\Delta$ of the form $\Delta \lessapprox n^{-3/2}$, which even for the case when $P$ is homogeneous in $[0,1]^2$ currently seems out of reach to us (as it would have to bypass the Szemer\'edi-Trotter obstruction). For the reader's convenience, we include a table below collecting all these different thresholds and all the previous best known bounds for the Heilbronn triangle problem, in chronological order.

\begin{table}[h]
\centering
\begin{tabular}{l | l l}
	\textbf{Authors} &  \\ \hline \\ [-1.8ex]
	Trivial bound & $\Delta \lesssim n^{-1}$ \\ 
	Roth \cite{Roth1} (1951) & $\Delta \lesssim n^{-1} (\log \log n)^{-1/2}$ \\ 
	Schmidt \cite{Schmidt} (1972) & $\Delta \lesssim n^{-1} (\log n)^{-1/2}$ \\ \hline\\[-1.8ex]
	Roth \cite{Roth2,Roth3,Roth4} (1972-73) & $\Delta \lesssim n^{-1-\mu}$ & $\sim n^{-1.117\ldots}$ \\ 
	KPS \cite{KPS81} (1981) & $\Delta \lesssim n^{-1-1/7}$ & $\sim n^{-1.142\ldots} $ \\ 
	This paper (2023) & $\Delta \lesssim n^{-1-1/7-1/2000}$ & $\sim n^{-1.143 \ldots}$ \\ \hline\\ [-1.8ex]
	Limit of high-low & $\Delta \lesssim n^{-7/6+\varepsilon}$ & $\sim n^{-1.167\ldots}$ \\ 
	Limit of incidence setup & $\Delta \lesssim n^{-3/2+\varepsilon}$ & $\phantom{\sim}\ n^{-1.5+\varepsilon}$ \\ 
	Best possible & $\Delta \lesssim n^{-2+\varepsilon}$ & $\phantom{\sim}\ n^{-2+\varepsilon}$
\end{tabular}
\vspace{0.1in}
\label{table:bound_table}
\end{table}

\section{Final remarks}

In this paper we showed that in every set of $n$ points chosen inside a unit square there exists a triangle of area less than $n^{-8/7-1/2000}$, thereby improving upon the classical result of Koml\'os, Pintz and Szemer\'edi from \cite{KPS81} by a polynomial factor. We also gave a further improved bound of $n^{-7/6}$, in the case when $P$ is homogeneous. Perhaps most importantly, our approach established new connections between the Heilbronn triangle and various themes in incidence geometry and projection theory that are closely related to the discretized sum-product theorem of Bourgain \cite{Bourgain03}. 

Apart from the problem of further improving the bounds for the Heilbronn triangle problem, there are several other related open questions which unfortunately are a bit less known. We would like to use this section as an opportunity to draw attention towards a few of our favorites. We hope that the ideas introduced in this paper will also lead to progress on some of these. 

The first one is a problem due to Motzkin and Schmidt (cf. \cite{BMP} and \cite{Beck90}). For every set of points $P \subset [0,1]^2$, let $\Phi(P)$ denote the smallest real number $w \geq 0$ for which there is a strip of width $w$ containing at least three points from $P$. Let
$$\Phi(n):=\max \Phi(P),$$
where the maximum is taken over all sets of $n$ points $P \subset [0,1]^2$. 

\begin{conj} \label{MS}
$$\Phi(n)=o(1/n).$$
\end{conj}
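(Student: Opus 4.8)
The first step is to recast Conjecture \ref{MS} in the incidence language of this paper. Assume $P$ is in general position (otherwise $\Phi(P)=0$), and let $L$ be the set of all $\binom{n}{2}$ lines spanned by pairs of points of $P$. Then ``$\Phi(P)>w$'' is exactly the \emph{uniform-width} analogue of the empty-strip condition \eqref{emptystrips}: every strip of width $w$ contains at most two points of $P$, and in particular every $\T_\ell(w)$ with $\ell\in L$ meets $P$ in just the two points spanning $\ell$. Equivalently, $\Phi(P)$ is (up to a constant) the least distance between a point of $P$ and a line of $L$ not through it, so the conjecture asks for an incidence lower bound: in any $n$-point set some point lies within $o(1/n)$ of a line through two others. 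The plan is to push the machinery of \S\ref{sec:inital_estimate}--\S\ref{sec:improved_bd_proof} through this setting. One structural feature is favourable here: the empty strips all have the \emph{same} width $w$, so pairs at distance $\sim 1$ are just as informative as short pairs, and we may take $|L|\sim n^2$ rather than restricting to pairs at distance $\le u$.

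Concretely, suppose $\Phi(P)>w$. As in \S\ref{sec:improved_bd_proof}, one first extracts --- via a greedy/pigeonhole construction in the style of Lemma \ref{lem:single_step_prep_87} and Lemma \ref{lem:several_step_prep_87} --- a large family of pairwise disjoint squares, each carrying a Frostman-regular piece of $P$; inside each piece Theorem \ref{marstrand} (or Theorem \ref{thm:finite_set_dir_set} in the concentrated ``rectangle'' case) produces a line set whose direction set is spread out, and the basic initial estimate Lemma \ref{lem:basic_initial_estimate_weighted} gives $B(w_i;P,L)\gtrsim 1$ at some initial scale $w_i$. One then runs the inductive step, i.e.\ Proposition \ref{prop:incidence_lower_bound_sep_weighted} together with the high-low bound Theorem \ref{thm:high_low_bound}, to propagate this lower bound down to a final scale comparable to $w$. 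At that scale the empty-strip condition forces $B(w;P,L)\lesssim 1/(wn)$, and comparing the two estimates yields $wn\lesssim 1$. This recovers $\Phi(n)=O(1/n)$ --- which also follows from a one-line projection argument, and which, by the Riesz-energy computation of \S\ref{sec:Incidences} (now with all the relevant strips of length $\sim 1$, hence no logarithmic gain), is already essentially tight up to the constant --- and, with care in the error terms and the initial estimate, a reasonable explicit constant.

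The obstacle, and the reason I do not expect this route alone to settle the conjecture as stated, is precisely the Szemer\'edi--Trotter barrier discussed after Theorem \ref{thm:high_low_bound}. With $|L|\sim n^2$ the high-low method bottoms out at scale $(|P||L|)^{-1/3}\sim n^{-1}$, so the incidence lower bound $B(w)\gtrsim 1$ and the emptiness upper bound $B(w)\lesssim 1/(wn)$ are consistent exactly in the regime $w\sim 1/n$; a \emph{strict} $o(1/n)$ bound would require incidence lower bounds below the $(|P||L|)^{-1/3}$ threshold --- the same difficulty that keeps $\Delta\lessapprox n^{-3/2}$ out of reach. I see two plausible openings. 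First, the uniform strip width is a strictly stronger hypothesis than in the Heilbronn problem, so a more global argument --- an energy or variance computation that averages the ``at most two points per width-$w$ strip'' constraint over \emph{all} directions at once (Cauchy's projection formula controls the total width), or a self-improving bootstrap of the $o(1/n)$ statement itself --- might recover the extra $\omega(1)$ factor that the scale-by-scale high-low argument loses to the Szemer\'edi--Trotter example. Second, one can split on the width $\rho$ of the convex hull of $P$: if $\rho\gtrsim 1$ then $P$ is genuinely spread out and the homogeneous-type improvement behind Theorem \ref{separated} should apply (or be strengthened) to beat the trivial projection count, while if $\rho$ is small then $P$ lies in a thin $\rho\times 1$ rectangle and one rescales and recurses, exactly as in the rectangle case of \S\ref{sec:improved_bd_proof}. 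Making either idea quantitative enough to cross the $1/n$ threshold is the crux, and I expect it to require a genuinely new incidence-lower-bound input rather than an optimisation of the present one.
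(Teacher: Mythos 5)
This statement is not a theorem of the paper: it is the Motzkin--Schmidt conjecture, which the paper records as an open problem in its final section. The paper offers no proof of it, explicitly states that no nontrivial upper bound beyond the pigeonhole bound $\Phi(n)\leq 3/n$ is known in general, and cites only Beck's partial result for uniformly distributed point sets. So there is no proof in the paper to compare yours against, and your proposal --- by your own admission --- does not supply one either: what you actually establish is the trivial $O(1/n)$ bound, and the $o(1/n)$ assertion is left entirely open.

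That said, your analysis of \emph{why} the machinery falls short is accurate and consistent with the paper's own discussion. With $|L|\sim n^2$ the high-low method indeed bottoms out at scale $(|P||L|)^{-1/3}\sim n^{-1}$, which is exactly where the empty-strip upper bound $B(w)\lesssim 1/(wn)$ and the incidence lower bound $B(w)\gtrsim 1$ become consistent; this is the same Szemer\'edi--Trotter obstruction the paper identifies as the barrier to $\Delta\lessapprox n^{-3/2}$. Your two suggested openings (a global averaging over all directions exploiting the uniform strip width, and a case split on the width of the convex hull with rescaling in the thin case) are reasonable directions but are not made quantitative, and the crux --- an incidence lower bound below the $(|P||L|)^{-1/3}$ threshold --- is precisely the missing ingredient. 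In short: the gap is not a flaw in your reasoning but the absence of any proof; you should present this as a discussion of an open problem, not as a proof proposal.
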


A simple pigeonholing argument shows that $\Phi(n) \leq 3/n$. Using an analytic technique in the style of Roth's method discussed in \S\ref{sec:Incidences}, Beck \cite{Beck90} proved this conjecture in the
special case when the set is uniformly distributed in the sense that it contains
precisely one point from each $n^{-1/2} \times n^{-1/2}$ square (note that this is a stronger condition than being homogeneous). No nontrivial upper bound is known in general. 

It is worth emphasizing that Conjecture \ref{MS} is directly related to the Heilbronn triangle problem. Clearly,
$$\Delta(n) \leq \Phi(n),$$
but the inequality goes only one way: a triangle of area $o(1/n)$ is not necessarily be contained in a strip of width $o(1/n)$. 

The next problem is a natural generalization of the Heilbronn triangle problem, which was also proposed by Schmidt in \cite{Schmidt}. Let $\Delta_{k}(n)$ be the smallest number $\Delta = \Delta(n)$ such that in every configuration of $n$ points in the unit square $[0,1]^2$ one can always find $k$ points among them whose convex hull has area at most $\Delta_{k}(n)$. 

\begin{conj} \label{MS}
For every $k \geq 3$,
$$\Delta_{k}(n)=o(1/n).$$
\end{conj}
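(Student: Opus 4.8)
The natural plan is to extend the incidence-geometric framework behind the $k=3$ case. For triangles one records, for each pair $\tau=\{x,y\}\subseteq P$, the empty strip $\T_{\tau}(4\Delta/d(\tau))$ of \eqref{emptystrips}, and then runs either Schmidt's weighted incidence bound \eqref{bounded_deg}--\eqref{Riesz} or Roth's density increment \cite{Roth1}. For general $k$ one should replace a pair by a $(k-1)$-point subset $\tau=\{x_1,\dots,x_{k-1}\}\subseteq P$ in convex position, and the strip by the \emph{collar}
\[
R_{\tau}=\bigl\{z\in[0,1]^2:\operatorname{area}\operatorname{conv}(\tau\cup\{z\})<\Delta\bigr\}.
\]
If $\Delta$ denotes the least area of $\operatorname{conv}$ of $k$ points of $P$ and $\operatorname{area}\operatorname{conv}(\tau)<\Delta$, then $R_{\tau}\cap P=\tau$, exactly as in \eqref{emptystrips}: any further point of $P$ inside $R_{\tau}$ would complete a $k$-gon of area $<\Delta$. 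Geometrically $R_{\tau}$ is a fattened polygon: off each edge of $\operatorname{conv}(\tau)$ of length $\ell$ it extends by width $\sim(\Delta-\operatorname{area}\operatorname{conv}(\tau))/\ell$, so its thickness is $\sim_k\Delta/\operatorname{diam}(\tau)$ and its area is $\gtrsim_k\Delta$. The base case $k=3$ is Roth's theorem $\Delta_3(n)=o(1/n)$ (sharpened by Theorem \ref{thm:main_theorem}), so one may try to induct on $k$.

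A first reduction handles the degenerate cases: if some square of side $<\sqrt{\Delta}$, or some strip of width $<\Delta/2$, already contains $\ge k$ points of $P$, then those $k$ points have $\operatorname{conv}$ of area $<\Delta$ and we are done. Hence we may assume $P$ has at most $k-1$ points in every such square and strip --- a packing condition that, via a standard argument, bounds quantities like $M_P(v\times v)$ and the number of mutually non-parallel near-degenerate $(k-1)$-tuples, precisely the inputs needed for the orthogonality estimates of \S\ref{subsec:high_low_proof}. One also needs a supply of $(k-1)$-tuples in convex position with small hull: this can be arranged by extracting, greedily, small clusters or narrow strips containing $\Omega_k(1)$ points and invoking the Erd\H{o}s--Szekeres theorem inside them, provided the quantitative cost of this step is kept under control.

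The core step would be a Schmidt-type inequality. Assign to each admissible $\tau$ the weight $w_{\tau}\sim\Delta/\operatorname{diam}(\tau)$ and set $D(z)=\sum_{\tau}w_{\tau}\mathbf{1}_{R_{\tau}}(z)$. The empty-collar property gives $\sum_{\tau}w_{\tau}=(k-1)^{-1}\sum_{p\in P}D(p)\le (k-1)^{-1}n\,\|D\|_{\infty}$, while integrating $D$ over $[0,1]^2$ gives $\sum_{\tau}w_{\tau}\operatorname{area}(R_{\tau})\gtrsim_k\Delta\sum_{\tau}w_{\tau}$. The argument then hinges on a bounded-degree estimate $\|D\|_{\infty}\lesssim_k 1$, i.e. that for every fixed $z\in[0,1]^2$
\[
\sum_{\tau}w_{\tau}\,\mathbf{1}\bigl[\operatorname{area}\operatorname{conv}(\tau\cup\{z\})<\Delta\bigr]\lesssim_k 1,
\]
which for $k=3$ is exactly \eqref{bounded_deg}, and in general is a weighted count of the small $k$-gons of $P$ through $z$, to be bounded by grouping the $\tau$'s according to $\operatorname{diam}(\tau)$ and the position of $z$ relative to them. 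Combining these would yield $\Delta^{2}\sum_{\tau}\operatorname{diam}(\tau)^{-1}\lesssim_k 1$, after which one would lower-bound $\sum_{\tau}\operatorname{diam}(\tau)^{-1}$ over the admissible $(k-1)$-tuples of an $n$-point set satisfying the packing condition, hoping to beat $n^{2}$ and thereby obtain $\Delta_k(n)=o_k(1/n)$. Alternatively one could attempt Roth's original density-increment argument on the family $\{R_{\tau}\}$, or the high-low method of \S\ref{sec:inductive_step_high_low}.

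The main obstacle, I expect, is that the long-range emptiness exploited when $k=3$ disappears for $k\ge 4$. When $\tau=\{x,y\}$, the area of $\operatorname{conv}(\tau\cup\{z\})$ depends only on $d(x,y)$ and on the distance from $z$ to the line $xy$, not on where $z$ lies along that line; consequently the empty set in \eqref{emptystrips} is the whole strip around $\ell_{xy}$, whose area $\sim\Delta/d(\tau)$ is much larger than $\Delta$ when $d(\tau)$ is small --- and it is exactly this enhancement that lets Schmidt's (and Roth's) argument beat the trivial bound. For $k\ge 4$ the hull of $\tau\cup\{z\}$ grows as $z$ moves away from $\tau$, so only a bounded neighbourhood of $\tau$, of area merely $\sim_k\Delta$, is forced to be empty, and the corresponding energy $\sum_{\tau}\operatorname{diam}(\tau)^{-1}$ appears to be only $\sim n^{2}$ for extremal configurations, giving back nothing better than $\Delta_k(n)\lesssim_k 1/n$. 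Overcoming this seems to require either a genuine density increment that exploits the \emph{collective} emptiness of all collars (in the spirit of Roth's 1951 argument), or a mechanism that manufactures large empty regions for $k\ge 4$ --- for instance by aggregating many nearly-aligned $(k-1)$-tuples into a common wide strip --- neither of which is straightforward. For the special class of point sets containing one point in every $n^{-1/2}\times n^{-1/2}$ square the conjecture already follows from Beck's theorem \cite{Beck90}, since $k$ points in a strip of width $o(1/n)$ have $\operatorname{conv}$ of area $o(1/n)$; the difficulty is entirely in the non-homogeneous regime.
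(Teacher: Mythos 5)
This statement is a conjecture, not a theorem: the paper records $\Delta_k(n)=o(1/n)$ as open for every $k\ge 4$ (only $k=3$ is known, via Roth's theorem and its refinements such as Theorem \ref{thm:main_theorem}), and it supplies no proof for you to be measured against. Your text is appropriately candid that it is a programme rather than an argument, and the obstruction you isolate is exactly the right one: for a nondegenerate $(k-1)$-tuple $\tau$ the forced-empty collar $R_\tau$ has area only $O_k(\Delta)$, not $\sim\Delta/d(\tau)$ as in \eqref{emptystrips}, so the Schmidt-type chain gives only $\Delta^2\sum_\tau \operatorname{diam}(\tau)^{-1}\lesssim_k 1$ with an energy that is at best $\sim n^2$ (indeed $\operatorname{diam}(\tau)^{-1}\gtrsim 1$ contributes nothing, and there is no guaranteed supply of admissible tuples of small diameter), which returns the trivial bound $\Delta_k(n)\lesssim_k 1/n$. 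In short, the proposal does not close the gap; it explains why the gap is there. That is a legitimate and useful observation, but it is not a proof, and no amount of optimizing the bounded-degree estimate $\|D\|_\infty\lesssim_k 1$ will rescue the scheme without a new source of large forced-empty regions.

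Two smaller points. First, your claimed reduction via Erd\H{o}s--Szekeres needs care: to run the argument you need \emph{many} admissible $(k-1)$-tuples in convex position with hull area below $\Delta$, and extracting them from clusters of $\Omega_k(1)$ points in squares of side $\sqrt{\Delta}$ presupposes such clusters exist, which the degenerate-case reduction has just excluded; the convex-position requirement is a genuine additional constraint absent from the $k=3$ case. Second, the closing claim that the homogeneous case ``already follows from Beck's theorem'' is not justified as stated: Beck's result \cite{Beck90} produces a strip of width $o(1/n)$ containing \emph{three} points of a uniformly distributed set, whereas your deduction needs a strip of width $o(1/n)$ containing $k$ points, i.e.\ $\Phi_k(n)=o(1/n)$; the paper raises exactly this $k$-point strip question (and the related question of Bourgain about width-$1/n$ tubes) as open. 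So even the special case you assert at the end is not established by the reference you invoke.
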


Clearly, $\Delta(n)=\Delta_{3}(n) \leq \Delta_{4}(n) \leq \ldots \leq \Delta_{k}(n)$ holds for al $k \geq 3$, so Conjecture \ref{MS} holds for $k=3$. However, rather surprisingly, the question remains open for every $k \geq 4$. The best known lower bound is due to Leffman \cite{Leffman} and comes from a generalization of the semi-random construction from \cite{KPS81}: for every $k \geq 3$,
\begin{equation}\label{Leff}
    \Delta_{k}(n) \gtrsim \frac{(\log n)^{\frac{1}{k-2}}}{n^{1+\frac{1}{k-2}} }.
\end{equation}

In the same spirit, one can also talk about an analogous generalization of the Motzkin-Schmidt problem, namely the question for strips that contain at least $k$ points from $P$, where $k \geq 3$ is fixed integer. Like before, let $\Phi_{k}(P)$ be the smallest $w \geq 0$ for which there is a strip of width $w$ containing at least $k$ points from $P$, and define $\Phi_{k}(n):=\max \Phi_{k}(P)$, where the maximum is taken again over all configurations of $n$ points $P \subset [0,1]^2$. Naturally, we have that $\Delta_{k}(n) \leq \Phi_{k}(n)$ for all $k \geq 3$, which already due to \eqref{Leff} suggests a rather interesting behavior for $\Phi_{k}(n)$ when $k$ becomes large. 

We would like to end with a question of Bourgain in the opposite direction.

\begin{question} 
Is it possible to find a set of $n$ points $P \subset [0,1]^2$ and an absolute constant $C > 0$ such that any tube of width $1/n$ contains at most $C$ points from $P$?
\end{question}

As Varj\'u writes in [Remembering Jean Bourgain (1954-2018), AMS Notices June 2021, p. 957], the motivation for this problem comes from a possible construction of spherical harmonics as a combination of Gaussian beams, which would have $L^{\infty}$ norm bounded by a constant independently of the degree. If one could show that no such constant existed, this would immediately imply that $\Phi_{k}(n) = o(1/n)$ holds for every $k \geq 3$. 

\bigskip

\bigskip

{\bf{Acknowledgements}}. We would like to thank Larry Guth and Peter Sarnak for inspiring discussions.

\bigskip

\appendix

\section{Proof of direction set estimates}\label{app:proof_of_dir_set_estimates}

In this section we prove Theorem \ref{thm:finite_set_dir_set} from the version that is stated in \cite{OSW}. 

\begin{theorem}[{\cite[Corollary 2.18]{OSW}}]\label{thm:OSW_original_thm}
For all $0 < \sigma < s \leq 1$ and $C, \varepsilon, w > 0$, there exists $\tau = \tau(\varepsilon, \sigma, s) > 0$ and $K = K(C, w, \varepsilon, s, \sigma) > 0$ such that the following holds. Let $\mu, \nu$ be probability measures on $[0,1]^2$, and let $X := \operatorname{supp}(\mu)$ and $Y := \operatorname{supp}(\nu)$. Suppose that
\begin{align}
    \mu(B(x, r)) &\leq Cr^s\quad\text{and}\quad \nu(B(x, r)) \leq C r^s\qquad \text{for all $x \in [0,1]^2$ and $r > 0$}, \label{eq:measures_frostman_OSW}\\ 
    \dist(X, Y) &\geq C^{-1}, \label{eq:measures_sep_OSW}
\end{align}
and 
\begin{equation}
    \max\{\mu(T), \nu(T)\} \leq \tau\label{eq:OSW_thin_on_tubes}
\end{equation}
for all $w$-tubes $T \subset \R^2$. Then there exists a Borel set $G \subset X\times Y$ with $(\mu\times \nu)(G) \geq 1-\varepsilon$ such that for any $x \in X$, 
\begin{equation*}
    \nu(T\cap G|_x)\leq K\cdot r^{\sigma}\quad \text{for all $r > 0$ and all $r$-tubes $T$ containing $x$}.
\end{equation*}
Here, $G|_x = \{y \in Y\, :\, (x, y) \in G\}$ is the restriction. We say that $G$ witnesses thin tubes. 
\end{theorem}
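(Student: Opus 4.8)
The plan is to discretize, to recast the conclusion as a Frostman bound for radial projections, and then to run an exceptional‑set argument whose only non‑soft ingredient is the discretized projection/incidence machinery behind Bourgain's sum--product theorem. First I would reformulate: for $x\in X$ the radial projection $\pi_x(y)=(y-x)/|y-x|$ is bi‑Lipschitz with constant $\sim C$ on $Y$ (using $\dist(X,Y)\ge C^{-1}$), so an $r$‑tube $T\ni x$ meets $Y$ in a set whose $\pi_x$‑image is an arc of length $\lesssim Cr$, and conversely. Hence, after adjusting $K$ by a factor depending only on $C,\sigma$, the target ``$\nu(T\cap G|_x)\le Kr^{\sigma}$ for all $r$‑tubes $T\ni x$'' becomes ``$\pi_x(\nu|_{G|_x})$ is $\sigma$‑Frostman with constant $K$, uniformly in $x$''.

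Next I would build $G$ as the complement of bad pairs and reduce to a single scale. For a dyadic scale $\delta$ and the eventual constant $K$, call $(x,y)$ \emph{$\delta$‑bad} if there is a $\delta$‑tube $T$ with $x,y\in T$ and $\nu(T)>K\delta^{\sigma}$; let $B$ be the union of all $\delta$‑bad pairs over dyadic $\delta$ and $G=(X\times Y)\setminus B$. Then for any $x$ and any $r$‑tube $T\ni x$ there is an all‑or‑nothing dichotomy: either $\nu(T)\le Kr^{\sigma}$, or the whole $\nu$‑mass of $T$ is $\delta$‑bad for $x$ at the dyadic $\delta\sim r$ and $\nu(T\cap G|_x)=0$. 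So it suffices to prove $(\mu\times\nu)(B)\le\varepsilon$; summing a geometric series in $\delta$, it is enough that the $\delta$‑bad set $B_{\delta}$ satisfy $(\mu\times\nu)(B_{\delta})\le c\varepsilon\delta^{\eta}$ for some $\eta>0$ once $K$ is large. Scales $\delta\gtrsim C^{-1}$ are trivial (there $T\cap Y\subset B(x,O(\delta))$ has $\nu$‑mass $\le C\delta^{s}\le K\delta^{\sigma}$), so I would fix $\delta\ll C^{-1}$.

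For the single‑scale bound I would argue by contradiction. If $(\mu\times\nu)(B_{\delta})$ is not small then, by Fubini, there is a set $X'$ with $\mu(X')\gtrsim\varepsilon$ so that for each $x\in X'$ a $\gtrsim\varepsilon$‑fraction of $\nu$ lies in $\delta$‑tubes through $x$, each of $\nu$‑mass $>K\delta^{\sigma}$; since these heavy tubes are $O(1)$‑overlapping in $\nu$ there are $\le O(\delta^{-\sigma}/K)$ of them, so $\pi_x$ of that portion of $\nu$ is supported on $\le O(\delta^{-\sigma}/K)$ arcs of length $\delta$. Thus there is an $s$‑Frostman piece $\nu'$ of $\nu$ (of mass $\gtrsim\varepsilon$), non‑concentrated on $w$‑tubes (inherited from $\nu(T)\le\tau$), whose radial projection from \emph{every} $x\in X'$ fails to be $\sigma$‑Frostman; and $X'$ is, at scale $\delta$, a $(\delta,s)$‑set of size $\gtrsim\varepsilon\delta^{-s}$ which is likewise non‑concentrated on $w$‑tubes. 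Dualizing (a viewpoint $x\mapsto$ line $x^{\ast}$, a line through $x$ in direction $\theta\mapsto$ a point of $x^{\ast}$), this is exactly a Furstenberg‑type incidence configuration forbidden by the discretized projection theorem of Bourgain \cite{Bourgain03} and its refinements \cite{OSW}: a $(\delta,s)$‑measure with $s\le 1$ that is non‑concentrated on $w$‑tubes (with threshold $\tau=\tau(\varepsilon,\sigma,s)$) projects, for all but a $\delta^{\Omega_{s,\sigma}(1)}$‑measure set of directions, to a $(\delta,\sigma')$‑Frostman measure for some $\sigma'$ strictly between $\sigma$ and $s$ — incompatible with being squeezed into $O(\delta^{-\sigma}/K)$ arcs once $K=K(C,w,\varepsilon,s,\sigma)$ is large (the loss from $x$ ranging over $X'$ rather than a full‑measure set is absorbed using the non‑concentration of $X'$). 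Note $\tau$ is dictated purely by the gain in Bourgain's theorem, hence depends only on $\varepsilon,\sigma,s$, whereas $C,w$ only inflate $K$.

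The hard part is precisely this last step. Soft ($L^2$/bush/Kaufman‑type) arguments deliver the conclusion only for $\sigma$ bounded away from $s$ — e.g.\ $\sigma$ up to roughly $s/(2-s)$ — and degrade as $s\downarrow 0$; reaching every $\sigma<s$ for every $s\le 1$ requires the full discretized sum--product/projection phenomenon and, in practice, a bootstrap that improves the exponent $\sigma$ in stages, each stage fed by a Furstenberg set estimate. The remaining bookkeeping — making the single‑scale bound uniform in $\delta$ so the geometric sum over scales goes through with one $K$, and checking that $\tau$ ends up depending only on $(\varepsilon,\sigma,s)$ — is then routine, as are the Fubini pigeonholing, the dualization, and the arc/tube dictionary.
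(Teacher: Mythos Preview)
The paper does not prove this statement. Theorem~\ref{thm:OSW_original_thm} is quoted verbatim from \cite[Corollary~2.18]{OSW} and used as a black box; the appendix only shows how to deduce Theorem~\ref{thm:finite_set_dir_set} (and, formally, Theorem~\ref{marstrand}) from it by smoothing the point set into Frostman measures and reading off a direction-set estimate from the radial-projection conclusion. So there is no ``paper's own proof'' to compare your sketch against.

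Your outline is a plausible high-level summary of the strategy behind results of this type --- reformulate as a Frostman bound on radial projections, remove bad pairs scale by scale, and at each scale invoke a discretized projection/Furstenberg estimate with non-concentration on tubes as the crucial hypothesis --- and you correctly identify that the substantive content lives entirely in that last step, which you defer to \cite{Bourgain03,OSW}. But be aware that this is precisely the step the present paper also declines to reprove: the authors take the OSW machinery as given. If your goal was to match the paper, the right ``proof'' here is simply a citation.
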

To prove Theorem \ref{thm:finite_set_dir_set} from Theorem \ref{thm:OSW_original_thm} we have to convert between measures satisfying the Frostman condition at all scales and a discrete set of points satisfying the Frostman condition down to scale $\delta$. Theorem \ref{thm:OSW_original_thm} estimates the dimension of radial projections of $Y$ onto points in $X$, but this easily implies a direction set estimate. In what follows $\Area(\cdot)$ is the Lebesgue measure. 
\begin{proof}[Proof of Theorem \ref{thm:finite_set_dir_set} from Theorem \ref{thm:OSW_original_thm}]
Let $0 < s < 1$, $\sigma < s$, and $C, w > 0$ be given. Let $\tau = \tau(\varepsilon, \sigma, s) > 0$ be as in Theorem \ref{thm:OSW_original_thm}. 
Let $P \subset [0,1]^2$ be a set of points such that 
\begin{equation}
    |P\cap Q| \leq C|Q|^s\, |P|,\quad \text{for any square $Q$ with $|Q| > \delta$},\label{eq:s_reg_pf_OSW}
\end{equation}
and suppose that 
\begin{equation*}
    |P\cap T| \leq \tau |P|
\end{equation*}
for all $w$-tubes $T$, $w > 2\delta$. 

By (\ref{eq:s_reg_pf_OSW}), there exist two $1/20C$-squares $Q_1$, $Q_2$ that each have $\geq (1/400C^2) |P|$ points of $P$ and which are $1/20C$-separated. Let $P_1 = P \cap Q_1$ and $P_2 = P \cap Q_2$, and set 
\begin{equation*}
    \mu = \frac{1}{|P_1|} \sum_{p \in P_1} \frac{1}{\Area(B_p)} 1_{B_p},\quad \nu = \frac{1}{|P_2|} \sum_{p\in P_2} \frac{1}{\Area(B_p)} 1_{B_p},\quad B_p = B(p, \delta/100).
\end{equation*}
Let $X = \supp(\mu)$, $Y = \supp(\nu)$. 
Notice that $\mu, \nu$ satisfy the Frostman condition (\ref{eq:measures_frostman_OSW}) with a constant $\tilde C \lesssim C$, they are $1/\tilde C$ separated, and they satisfy (\ref{eq:OSW_thin_on_tubes}) at scale $w/2$.

We apply theorem Theorem \ref{thm:OSW_original_thm} with $\varepsilon = 1/2$. Let $G \subset X \times Y$ be the Borel set witnessing thin tubes, with $(\mu\times\nu)(G) \geq 1/2$.
Let 
\begin{align*}
    \tilde G &= \Bigl\{(p_1, p_2)\in P_1 \times P_2\, :\, \Area(G\cap (B_{p_1} \times B_{p_2})) \geq \frac{1}{4}\Area(B_{p_1})\Area(B_{p_2})\Bigr\}, \\ 
    L &= \{\ell_{p_1, p_2}\, :\, (p_1, p_2) \in \tilde G\}. 
\end{align*}
We have 
\begin{align*}
    (\mu \times \nu)(G)  &= \frac{1}{|P_1|\,|P_2|}\sum_{(p_1, p_2) \in P_1 \times P_2} \frac{1}{\Area(B_{p_1})\Area(B_{p_2})}\Area(G\cap (B_{p_1} \times B_{p_2})) \\ 
    &\leq \frac{|L|}{|P_1|\, |P_2|} + \frac{1}{4}.
\end{align*}
Using the lower bound $(\mu\times \nu)(G) \geq 1/2$, we get $|L| \geq \frac{1}{4}|P_1|\, |P_2| \gtrsim |P|^2$. 

Let $p_1 \in P_1$ and let $T$ be an $r$-tube containing $p_1$ with $r > \delta$. We prove an estimate for $| \tilde G|_{p_1} \cap T |$. Let $2T$ be the $2r$-tube with the same central line.
For any $x \in [0,1]^2$, we have $\nu(2T\cap G|_x) \lesssim Kr^{\sigma}$. Integrating, 
\begin{equation*}
    \frac{1}{\Area(B_{p_1})}\int_{x \in B_{p_1}} \nu(2T \cap G|_{x})\, dx \lesssim Kr^{\sigma}
\end{equation*}
as well. We expand this integral using Fubini's theorem,
\begin{align*}
    \frac{1}{\Area(B_{p_1})}\int_{x \in B_{p_1}} \nu(2T \cap G|_{x})\, dx &= \frac{1}{\Area(B_{p_1})} \int_{x \in B_{p_1}}  \frac{1}{|P_2|}\sum_{p_2 \in P_2}\frac{1}{\Area(B_{p_2})}\Area(B_{p_2}\cap 2T\cap G|_x)\, dx \\ 
    &\geq \frac{1}{\Area(B_{p_1})} \int_{x \in B_{p_1}}  \frac{1}{|P_2|}\sum_{p_2 \in P_2\cap T}\frac{1}{\Area(B_{p_2})}\Area(B_{p_2}\cap G|_x)\, dx \\ 
    &=\frac{1}{\Area(B_{p_1})\Area(B_{p_2})|P_2|} \sum_{p_2 \in P_2 \cap T} \Area(G\cap (B_{p_1}\times B_{p_2})) \\ 
    &\geq \frac{1}{4|P_2|}\, | \tilde G|_{p_1} \cap T |,
\end{align*}
so
\begin{equation}\label{eq:point_set_rad_proj}
    |\tilde G|_{p_1} \cap T| \lesssim K|P_2|\, r^{\sigma}. 
\end{equation}
One should read (\ref{eq:point_set_rad_proj}) as a radial projection estimate. Now we prove a direction set estimate. Let $I \subset S^1$ be an interval with $|I| \geq \delta$. For any $p_1 \in P_1$, the locus of points $p_2$ such that $\theta(p_1, p_2) \in I$ is a double cone with angle $|I|$, which is contained in the width $20|I|$ tube $T_{p_1}$ centered at $p_1$ and pointing in a direction $\theta_I \in I$. Thus
\begin{equation*}
    \#\{\ell \in L\, :\, \theta(\ell) \in I\} \leq \sup_{p_1 \in P_1} |\tilde G|_{p_1}\cap T_{p_1}| \leq 4K|P_1||P_2|\,|I|^{\sigma} \lesssim K|L|\, |I|^{\sigma}. 
\end{equation*}
\end{proof}

Finally, we show how Theorem \ref{marstrand} formally follows from Theorem \ref{thm:finite_set_dir_set}, although we emphasize that Theorem \ref{marstrand} can be proved more easily directly (there is a proof using the high-low method). 

\begin{proof}[Proof of Theorem \ref{marstrand} from Theorem \ref{thm:finite_set_dir_set}]
Let $s > 1$. If $P \subset [0,1]^2$ is $s$-regular above scale $\delta$ with constant $C$, then any $w \times 1$ tube $T$ with $w > \delta$ can be covered with at most $2w^{-1}$ squares of side length $w$. Applying the Frostman regularity condition to these squares and summing, we get
\begin{equation*}
    |P\cap T| \leq 2C\, w^{-1}\, (|P|w^s) \leq 2C|P|\, w^{s-1} \lll |P|
\end{equation*}
so the conditions of Theorem \ref{thm:finite_set_dir_set} are easily satisfied. 
\end{proof}

\bigskip

\end{document}